\def\@tocline#1#2#3#4#5#6#7{\relax
  \ifnum #1>\c@tocdepth %
  \else
    \par \addpenalty\@secpenalty\addvspace{#2}%
    \begingroup \hyphenpenalty\@M
    \@ifempty{#4}{%
      \@tempdima\csname r@tocindent\number#1\endcsname\relax
    }{%
      \@tempdima#4\relax
    }%
    \parindent\z@ \leftskip#3\relax \advance\leftskip\@tempdima\relax
    \rightskip\@pnumwidth plus4em \parfillskip-\@pnumwidth
    #5\leavevmode\hskip-\@tempdima
      \ifcase #1
       \or\or \hskip 1em \or \hskip 2em \else \hskip 3em \fi%
      #6\nobreak\relax
    \dotfill\hbox to\@pnumwidth{\@tocpagenum{#7}}\par
    \nobreak
    \endgroup
  \fi}
\DeclareRobustCommand{\gobblefive}[5]{}
\newcommand*{\SkipTocEntry}{\addtocontents{toc}{\gobblefive}}
\let\old@subsection\subsection
\def\subsection*#1{
    \SkipTocEntry\old@subsection*{#1}
}
\def\bigsubsection{\@startsection{subsection}{2}%
  \z@{.5\linespacing\@plus.7\linespacing}
{.5\baselineskip}%
  {\normalfont\centering\scshape}%
}
\newcommand{\unicodemu}{\unichar{"03BC}}
\newcommand{\unicodenu}{\unichar{"03BD}}
\newcommand{\leqnomode}{\tagsleft@true\let\veqno\@@leqno}
\newcommand{\reqnomode}{\tagsleft@false\let\veqno\@@eqno}
\def\th@plain{%
  \thm@notefont{}%
  \itshape %
}
\def\th@definition{%
  \thm@notefont{}%
  \normalfont %
}
\theoremstyle{plain}
\newtheorem{theorem}{Theorem}[section]
\newtheorem{corollary}[theorem]{Corollary}
\newtheorem{proposition}[theorem]{Proposition}
\newtheorem{lemma}[theorem]{Lemma}
\newtheorem*{theorem*}{Theorem}
\newtheorem*{corollary*}{Corollary}
\newtheorem*{proposition*}{Proposition}
\newtheorem*{lemma*}{Lemma}
\newtheorem{introtheorem}{Theorem}
\newtheorem{introcorollary}{Corollary}
\theoremstyle{definition}
\newtheorem{definition}[theorem]{Definition}
\newtheorem{remark}[theorem]{Remark}
\newtheorem{example}[theorem]{Example}
\newtheorem*{definition*}{Definition}
\newtheorem*{remark*}{Remark}
\newtheorem*{example*}{Example}
\theoremstyle{plain}
\newenvironment{manualtheorem}[1]{%
  \manualtheoreminner
}{\endmanualtheoreminner}
\newenvironment{manualcorollary}[1]{%
  \manualcorollaryinner
}{\endmanualcorollaryinner}
    \xpatchcmd{\cref@thmoptarg}{\thm@headpunct{.}}{\thm@headpunct{}}{}{}
    \xpatchcmd{\cref@thmnoarg}{\thm@headpunct{.}}{\thm@headpunct{}}{}{}
\def\namedlabel#1#2{\begingroup
    #2%
    \def\@currentlabel{#2}%
    \phantomsection\label{#1}\endgroup
}
\newcommand{\defeq}{\vcentcolon=}
\newcommand{\supp}[1]{\operatorname{supp}({#1})}
\newcommand{\diam}[1]{\operatorname{diam}({#1})}
\newcommand{\conv}[1]{\operatorname{conv}({#1})}
\newcommand{\altll}{< \mspace{-6mu} <}
\newcommand{\altgg}{> \mspace{-6mu} >}
\newcommand{\Koperator}[1][]{%
    \ifthenelse{\equal{#1}{}}
        {\mathrm{I}}
        {    \ifthenelse{\equal{#1}{*}}
            {\mathrm{I}_*}
            {\mathrm{I}{\left[{#1}\right]}}
        }
}
\newcommand{\Moperator}[1][]{%
    \ifthenelse{\equal{#1}{}}
        {\mathrm{C}}
        {\mathrm{C}[{#1}]}
}
\newcommand{\semidiscrete}[1]{
    {#1^{\mspace{2mu}s}}
}
\newcommand{\continuous}[1]{
    {#1^{\mspace{2mu}c}}
}
\DeclareMathOperator*{\esssup}{ess\,sup}
\newcommand{\newunderset}[2]{\: \underset{
    \text{\raisebox{1.2ex}{\smash{\scalebox{0.9}{$#1$}}}}%
}{
    \text{\raisebox{0.2ex}{\smash{$#2$}}}
} \:}
    \renewcommand{\longrightharpoonup}{%
        \mathrel{-}\joinrel\rightharpoonup%
    }
    \newcommand{\longrightharpoonup}{%
        \mathrel{-}\joinrel\rightharpoonup%
    }
\newcommand{\converges}[1][]{%
    \ifthenelse{\isempty{#1}}%
        {\rightarrow}%
        {\smash{\newunderset{{#1}}{\rightarrow}}}%
}
\newcommand{\longconverges}[1][]{%
    \ifthenelse{\isempty{#1}}%
        {\longrightarrow}%
        {\smash{\newunderset{{#1}}{\longrightarrow}}}%
}
\newcommand{\weaklyconverges}[1][]{%
    \ifthenelse{\isempty{#1}}%
        {\rightharpoonup}%
        {\smash{\newunderset{{#1}}{\rightharpoonup}}}%
}
\newcommand{\longweaklyconverges}[1][]{%
    \ifthenelse{\isempty{#1}}%
        {\longrightharpoonup}%
        {\smash{\newunderset{{#1}}{\longrightharpoonup}}}%
}
\newcommand{\customQ}{\mathchoice
  {\mbox{\larger[-0.8]$Q$}}
  {\mbox{\larger[-0.8]$Q$}}
  {\mbox{\larger[-2]$Q$}}
  {\mbox{\larger[-2.8]$Q$}}
}
\newcommand{\Qfact}{{\customQ}}
\newcommand{\minsetkantorovich}{\mathcal{O}_{\operatorname{KP}}}
\newcommand{\minsetmonge}{\mathcal{O}_{\operatorname{MP}}}
\begin{document}

\title[Discrete Approximation of OT on Compact Spaces]{%
    Discrete Approximation of Optimal Transport \break on Compact Spaces
}
\newcommand{\myname}{Maximiliano Frungillo}
\author{\myname}
\address{\begin{minipage}{0.85\textwidth}\vspace{3mm}
\myname \hfill%
{\Letter \hspace{1mm} \tt mfrungillo@dm.uba.ar}%
\break\indent%
Departamento de Matem{\'a}tica, FCEyN, Universidad de Buenos Aires,%
\break\indent%
(1428) Buenos Aires, Argentina.%
\end{minipage}}

\begin{abstract}

We investigate the approximation of Monge--Kantorovich problems
on general compact metric spaces,
showing that optimal values, plans and maps can be effectively
approximated via a fully discrete method.
First we approximate optimal values and plans by solving finite dimensional
discretizations of the corresponding Kantorovich problem.
Then we approximate optimal maps by
means of the usual barycentric projection or by
an analogous procedure available in general spaces without a linear structure.
We prove the convergence of all these approximants in full generality and show
that our convergence results are sharp.

\end{abstract}

\maketitle

\vspace{-8mm}

{\normalfont\Small
\begin{enumerate}[%
    wide, labelsep=6pt, leftmargin=36pt, rightmargin=36pt, itemindent=0pt%
]
\item[\hskip\labelsep\scshape Keywords:]
optimal transport, Monge--Kantorovich problem, compact spaces,
discrete approximation, convergence in $W^p$, convergence in $L^p$.
\end{enumerate}
}

\vspace{4mm}

\centerline{ {MSC 2020\,:}
65J99, %
65K99, %
49Q22, %
90C08  %
}

\tableofcontents

\parskip5pt

\section{Introduction}
\label{section_introduction}

In this paper we study the discrete approximation of
Optimal Transport (OT)
\cite{
villani_topics,
villani_old_new,
ambrosio_gigli_savare,
santambrogio_2015_OT_for_app,
ambrosio_brue_semola_lectures_OT,
figalli_glaudo_invitation_OT}
on general compact metric spaces.
Simply put, the OT problem $(X,Y,\mu,\nu,c)$ consists in
transporting the probability $\mu \in \mathcal{P}(X)$ into $\nu\in \mathcal{P}(Y)$
while minimizing the total cost induced by the cost function $c(x,y)$.
This transportation of measure is done by a map $T:X\to Y$ in Monge's version
of OT and by a measure $\pi\in\mathcal{P}(X\times Y)$ in Kantorovich's version,
so the total cost operators are
\begin{equation}
\label{eq_intro_def_total_cost_operators_monge_kantorovich}
\Moperator[T] = \int_{X}{c(x,T(x)) \; d\mu(x)}
\;\;\text{ and }\;\;
\Koperator[\pi] = \int_{X\times Y}{c(x,y) \; d\pi(x,y)}
\end{equation}
respectively.
Our main objective is to show that, when they exist, minimizers $T_*$ and $\pi_*$
can be successfully approximated via discrete methods under minimal (sharp) assumptions
on $(X,Y,\mu,\nu,c)$.
In order to state our main results and discuss how they fit within the current
literature we start by summarizing the basic theory of OT and introducing
our discretization scheme.

\subsection*{Briefing on OT}

An \emph{OT problem} is a quintuple $(X,Y,\mu,\nu,c)$ where
$(X,d_X)$ and $(Y,d_Y)$ are Polish metric spaces,
$\mu\in\mathcal{P}(X)$ and $\nu\in\mathcal{P}(Y)$ are Borel probability measures
and $c:X\times Y\to \mathbb{R}$ is a Borel cost function.
In this context, a \emph{transport map} is a Borel map $T\in\mathcal{B}(X,Y)$
with $T_\#\mu = \nu$, i.e.\
\[
(T_\#\mu)[F] \defeq \mu[T^{-1}(F)] = \nu[F]
\;\;\text{for all Borel sets $F\in\mathcal{B}(Y)$,}
\]
while a \emph{transport plan}
is a measure $\pi\in\mathcal{P}(X\times Y)$ with marginals $\mu$ and $\nu$, i.e.\
\[
\pi[E\times Y] = \mu[E]
\;\text{ and }\;
\pi[X\times F] = \nu[F]
\quad
\text{for all $E\in\mathcal{B}(X)$, $F\in\mathcal{B}(Y)$.}
\]
The set $\mathcal{T}(\mu, \nu)$ of transport maps injects into the set
$\mathcal{A}(\mu, \nu)$ of transport plans
since every map $T\in\mathcal{T}(\mu,\nu)$ induces a transport plan
$\pi_T \defeq (\operatorname{Id}_X \times \mspace{5mu}T )_\#\mu$,
where $(\operatorname{Id}_X \times \mspace{5mu}T)(x)=(x,T(x))$ for $x\in X$.
In general $\mu\otimes\nu \in \mathcal{A}(\mu, \nu)\neq \emptyset$ but
$\mathcal{T}(\mu, \nu)$ may be empty, which is prevented e.g.\
by requiring non-atomic $\mu$
\cite{gangbo_monge_mass_transfer_problem}.

To $(X,Y,\mu,\nu,c)$ we associate its \emph{Monge problem}
\begin{equation*}
\label{monge_problem}\tag{MP}
\min{
    \left\{
        \int_{X}{c(x,T(x)) \; d\mu(x)} \; : \; T \in \mathcal{T}(\mu, \nu)
    \right\}
}
=\min_{T\in\mathcal{T}(\mu,\nu)}{\Moperator[T]}
\end{equation*}
and its \emph{Kantorovich problem}
\begin{equation*}
\label{kantorovich_problem}\tag{KP}
\min{
    \left\{
        \int_{X\times Y}{c(x,y) \; d\pi(x,y)} \; : \; \pi \in \mathcal{A}(\mu, \nu)
    \right\}
}
=\min_{\pi \in \mathcal{A}(\mu,\nu)}{\Koperator[\pi]},
\end{equation*}
where $\Moperator$ and $\Koperator$ are the total cost operators
defined in \eqref{eq_intro_def_total_cost_operators_monge_kantorovich}.
Notice that \eqref{kantorovich_problem} is a relaxation of
\eqref{monge_problem} since
$\pi_T \in \mathcal{A}(\mu,\nu)$ and $\Koperator[\pi_T] = \Moperator[T]$
for every $T\in\mathcal{T}(\mu,\nu)$.

The problem \eqref{kantorovich_problem} achieves a minimum
whenever $c$ is lower semicontinuous and bounded below.
This fact follows from a standard application of the direct method
in the calculus of variations to the cost operator $\Koperator$
after endowing $\mathcal{P}(X\times Y)$ with the topology of the
\emph{weak (or narrow) convergence} of measures.
This metrizable topology is characterized on a general Polish metric space $(Z,d_Z)$
by stating that a sequence $(\sigma_k)_k \subset \mathcal{P}(Z)$
converges to $\sigma\in\mathcal{P}(Z)$, notated $\sigma_k \longweaklyconverges[k] \sigma$, iff
\begin{equation}
\label{eq_intro_weak_convergence_definition}
\int_{Z}{\phi(z) \, d\sigma_k(z)}
\longconverges[k]
\int_{Z}{\phi(z) \, d\sigma(z)}
\;\text{ for all $\phi \in C_b(Z)$.}
\end{equation}

The analysis of the well-definedness for \eqref{monge_problem}
requires additional assumptions and is much subtler than
the straightforward argument applied to \eqref{kantorovich_problem}.
The following facts illustrate what we can expect
under adequate hypotheses:
\begin{enumerate}[%
    label=\emph{\roman*)}, ref=\emph{\roman*)},%
    topsep=-2pt, itemsep=2pt, leftmargin=20pt%
]

\item The optimal value
of \eqref{kantorovich_problem} agrees with
$\inf_{T\in\mathcal{T}(\mu,\nu)}{\Moperator[T]}$
for continuous $c$ and non-atomic $\mu$
\cite{ambrosio_lecture_notes,pratelli2007equality},
while in general we only have the inequality
\begin{equation}
\label{eq_equality_kantorovich_min_monge_inf}
\min\nolimits_{\pi\in\mathcal{A}(\mu,\nu)}{\Koperator[\pi]}
\leq \inf\nolimits_{T\in\mathcal{T}(\mu,\nu)}{\Moperator[T]}.
\end{equation}

\item For nice cost functions and measures \eqref{monge_problem}
admits a minimizer $T_*$, unique up to redefinition on
a $\mu$-null set.
This holds e.g.\ on $\mathbb{R}^d$ for
$c(x,y) = |x-y|^p$ with $p>1$ when $\mu \altll \mathcal{L}^d$
\cite{caffarelli_allocation_maps_general_cost_functions,
gangbo_mccann_the_geometry_of_OT}
and also holds on a compact connected Riemannian manifold $(M,g)$
for the cost $c(x,y)=d^{\,2}(x,y)$ if $\mu \altll vol_{M}$,
where $d(x,y)$ is the geodesic distance on $M$
\cite{mccann_polar_factorization}.
In both cases the transport plan $\pi_{T_*}$ is the unique
minimizer for \eqref{kantorovich_problem} and
then equality holds in
\eqref{eq_equality_kantorovich_min_monge_inf}.
\end{enumerate}

\subsection*{Uniqueness Hypotheses}

We will reserve the notation $\Koperator[*]$ for the optimal
value of \eqref{kantorovich_problem}
and reserve $\pi_*$ and $T_*$ for the minimizers of
\eqref{kantorovich_problem} and \eqref{monge_problem} respectively.
Their existence and uniqueness will be enforced by imposing
appropriate hypotheses on $(X,Y,\mu,\nu,c)$; precisely, a uniqueness hypothesis
for \eqref{kantorovich_problem}
and a stronger uniqueness hypothesis
linking \eqref{kantorovich_problem} and \eqref{monge_problem}.
They read as follows:
\begin{enumerate}[leftmargin=40pt,labelsep=8pt,topsep=4pt,itemsep=10pt]
\item[\namedlabel{kantorovich_uniqueness_hypothesis}{{\normalfont(KU)}}]
There is a unique optimal transport plan
    $\pi_*\in\mathcal{A}(\mu,\nu)$ for \eqref{kantorovich_problem}.
\item[\namedlabel{strong_uniqueness_hypothesis}{{\normalfont(SU)}}]
There is a unique optimal transport plan
    $\pi_*\in\mathcal{A}(\mu,\nu)$ for \eqref{kantorovich_problem} and it is
    induced by a transport map $T_*\in \mathcal{T}(\mu,\nu)$,
    in the sense that $\pi_{T_*}=\pi_*$.
\end{enumerate}

It follows from \ref{strong_uniqueness_hypothesis} that the optimal values of
\eqref{kantorovich_problem} and \eqref{monge_problem} agree
and that the map $T_*$ is a minimizer for \eqref{monge_problem},
unique up to redefinition on a $\mu$-null set.
Of course, \ref{kantorovich_uniqueness_hypothesis}
follows from \ref{strong_uniqueness_hypothesis},
which in turn holds true for a variety of problems.
Among them we have problems on $\mathbb{R}^d$
with a convex or concave cost
\cite{
caffarelli_allocation_maps_general_cost_functions,
gangbo_mccann_the_geometry_of_OT,
gangbo_mccann_optimal_maps,
santambrogio_concave}
or with a general cost satisfying a twist condition
\cite{
levin_abstract_cyclical_monotonicity,
carlier_duality_existence,
ma_trudinger_wang,
de_philippis_figalli_survey},
problems on Riemannian manifolds
\cite{
mccann_polar_factorization,
bernard_buffoni_OT_mather,
villani_old_new,
figalli_fathi_non_compact,
figalli_regularity_of_OT_maps_non_compact}
and problems on geodesic metric spaces satisfying
certain curvature conditions
\cite{
bertrand_alexandrov_spaces,
gigli_optimal_maps_non_branching,
rajala_schultz_alexandrov,
cavalletti_metric_measure_spaces}.
Generally speaking,
\ref{strong_uniqueness_hypothesis}
follows when $c(x,y)$ satisfies a twist condition
and $\mu$ gives zero mass to ``small'' sets.

It is worth noting that even though
\ref{kantorovich_uniqueness_hypothesis}
is strictly weaker than
\ref{strong_uniqueness_hypothesis},
it is hard to decide nontrivial classes
of problems for which \ref{kantorovich_uniqueness_hypothesis} holds true
without forcing \ref{strong_uniqueness_hypothesis} to also hold true.
On the one hand, \ref{kantorovich_uniqueness_hypothesis} may fail
even when $(X,Y,\mu,\nu,c)$ is required to satisfy hypotheses
strong enough to imply the regularity of the
supports of optimal transport plans \cite[Remark 3.3]{mccann_warren_pass}.
On the other hand, if those hypotheses are strengthened to prevent
non-uniqueness for \eqref{kantorovich_problem} in the usual way
(i.e.\ via a twist condition)
then \ref{strong_uniqueness_hypothesis} ends up holding true.
For a positive result in this direction see \cite{mccann_et_al},
in particular Theorem 5.1 and the subsequent discussion.

\subsection*{Discretization Scheme}

We consider finite partitions for $X,Y$ by pointed Borel sets of diameters
at most $h$, which we call \emph{$h$-partitions} and generically notate
\[
\mathcal{C}_X = \big\{ (E_i, x_i)\big\}_{i\in I} \;\text{ and }\;
\mathcal{C}_Y = \big\{ (F_j, y_j) \big\}_{j\in J}
\mspace{1mu}.
\]
Given $h$-partitions $\mathcal{C}_X$, $\mathcal{C}_Y$
we define the discrete probability measures
\begin{equation}
\label{eq_intro_discrete_measures_u_v}
\mu_h = \sum\nolimits_{i\in I}{\,\mu_i \mspace{1mu} \delta_{x_i}}
\;\;\text{ and }\;\;
\nu_h = \sum\nolimits_{j\in J}{\,\nu_j \mspace{1mu} \delta_{y_j}}
\mspace{1mu},
\end{equation}
where $\mu_i = \mu[E_i]$ and $\nu_j = \nu[F_j]$.
We say that $\mu_h$ and $\nu_h$ are \emph{$h$-approximations} of $\mu$ and $\nu$
and that $(X,Y,\mu_h,\nu_h,c)$ is an \emph{$h$-approximation} of $(X,Y,\mu,\nu,c)$.

The problem \eqref{kantorovich_problem} for
$(X,Y,\mu_h,\nu_h,c)$ turns into the finite linear program
\begin{equation*}
\label{kantorovich_discrete_problem}\tag{FP}
\begin{alignedat}{2}
&\min_{(\pi_{ij})_{ij}\,\geq 0} \quad && \sum_{i \in I}{\sum_{j \in J}{\pi_{ij}\,c_{ij}}} \\[1ex]
&\mspace{17mu}\operatorname{s.t.}
    &&\sum\nolimits_{j\in J}{\pi_{ij}} = \mu_i \quad \forall i \in I,
    \;\;
    \sum\nolimits_{i\in I}{\pi_{ij}} = \nu_j \quad \forall j \in J,
\end{alignedat}
\end{equation*}
where $c_{ij} = c(x_i,y_j)$ for all $i,j$.
Here we have identified the measure
$\sum_{ij}{\pi_{ij}\mspace{2mu}\delta_{(x_i,y_j)}}$ on $X\times Y$
with the doubly indexed sequence $(\pi_{ij})_{ij}\in\mathbb{R}^{I\times J}$
and both will be generically notated $\pi_h$.
Thus, the constraints in \eqref{kantorovich_discrete_problem}
enforce the condition $\pi_h\in\mathcal{A}(\mu_h,\nu_h)$
and the operator $\Koperator$ turns into the cost functional of
\eqref{kantorovich_discrete_problem} since
\[
\sum_{i \in I}{\sum_{j \in J}{\pi_{ij}\,c_{ij}}}
= \sum_{(i,j) \in I \times J}{\pi_{ij}\,c(x_i,y_j)}
= \int_{X\times Y}{c(x,y)\,d\pi_h(x,y)}
= \Koperator[\pi_h].
\]

A feasible solution $\pi_h = (\pi_{ij})_{ij}$ for
\eqref{kantorovich_discrete_problem} is \emph{$\varepsilon$-optimal} ($\varepsilon\geq 0$)
if it is at most $\varepsilon$ apart in cost from an optimal solution, that
is, if
$\Koperator[\pi_h] \leq \varepsilon + \Koperator[\widetilde{\pi}_h]$
for any other feasible solution $\widetilde{\pi}_h$ for
\eqref{kantorovich_discrete_problem}.
Of course, $\pi_h$ is \emph{optimal} when it is $0$-optimal.
An \emph{$h$-plan} for $(X,Y,\mu,\nu,c)$ is a feasible solution
for \eqref{kantorovich_discrete_problem}, so
if $\pi_h$ is an $\varepsilon$-optimal $h$-plan we expect
$\Koperator[\pi_h]$ to be a good approximation of $\Koperator[*]$
for sufficiently small $h$ and $\varepsilon$.
In addition, if \ref{kantorovich_uniqueness_hypothesis} holds true,
we also expect $\pi_h$ to be near $\pi_*$ in some sense.

From a computational viewpoint, one advantage of discretizing
\eqref{kantorovich_problem}
as
\eqref{kantorovich_discrete_problem}
is the availability of efficient procedures for finding good feasible solutions,
such as the (network) simplex method
\cite{bertsimas1997introduction, luenberger2008linear},
a variety of combinatorial algorithms
\cite{network_flows_1993, assignment_problems_2012}
or the entropic regularization method
\cite{cuturi2013sinkhorn, peyre_cuturi_computational_OT}.
It is worth noting, however, that some care is needed to keep
\eqref{kantorovich_discrete_problem}
tractable for small $h$.
The key insight here is that, despite the number of variables in
\eqref{kantorovich_discrete_problem}
being $|I||J|$,
it is always possible to get good or even optimal solutions $\pi_h$
with $|\supp{\pi_h}| \leq |I| + |J|$.
As a consequence, a good estimation of $\supp{\pi_h}$ reduces the
(effective) number of variables in
\eqref{kantorovich_discrete_problem}
from $|I||J|$ to $O(|I|+|J|)$ and improves the
cost of computing $\pi_h$.
In essence, this is why multiscale methods
\cite{schmitzer_schnorr_2013,
schmitzer_2016,
gerber_maggioni_2017,
oberman_ruan_2020,
bartels_hertzog_2022}
for \eqref{kantorovich_discrete_problem}
perform well in practice.

\subsection*{Projection Maps}

Constructing an approximation $T_h$ for $T_*$ requires some extra effort
since in general the support of $\pi_h$ is not function-like, in the sense
that more than one $\pi_{ij}$ might be non-zero for any given $i\in I$.
When $Y$ is a subset of a linear space this obstruction is usually
overcome by considering
the \emph{barycentric projection map} $T_h:X\to Y$ of $\pi_h$ given by
\begin{equation}
\label{eq_intro_barycentric_proj_definition}
T_h(x) = y_i \defeq \sum\nolimits_{j\in J}{\mathlarger{\tfrac{\pi_{ij}}{\mu_i}}\, y_j}
\;\; \text{ for $i\in I$ and $x\in E_i\,$.}
\end{equation}
Observe that this construction is meaningless for general $(Y,d_Y)$ and it
fails to be well-defined even if $Y$ is a general
(i.e.\ non-convex) subset of a linear space.

To adapt the idea of barycentric projections to general spaces note
that each $y_i$ in
\eqref{eq_intro_barycentric_proj_definition}
is the expected value $\mathbb{E}[Y_i]$ of an $Y$-valued
random variable $Y_i$ taking each value $y_j$ with probability $\pi_{ij}/\mu_i$.
Then, it is enough to substitute $\mathbb{E}$
by some other notion of central tendency depending only on the metric structure
of $(Y,d_Y)$. We will do this by considering weighted geometric medians and
maps $T_h$ obtained this way, as well as barycentric projection maps, will be
collectively called \emph{projection maps}.

\subsection*{Main Results}

In what follows $(X,Y,\mu,\nu,c)$ will be \emph{compact}, meaning
that $X$ and $Y$ are compact and $c$ is continuous.
Then, the product metric
\[
d_{X\times Y} ((x_1,y_1),(x_2,y_2)) = \max{\{ d_X(x_1,x_2),\, d_Y(y_1,y_2)\}}
\]
makes $(X\times Y, \, d_{X \times Y})$ compact and $c$ uniformly continuous
with modulus
\[
\omega_c(h)
    = \max{\Big\{\,|c(z_1)-c(z_2)|\,:\, z_1,z_2\in X\times Y \text{ with }
        d_{X\times Y}(z_1,z_2) \leq h
    \,\Big\}}.
\]
The compactness of $X\times Y$ also implies that the topology of the
weak convergence on $\mathcal{P}(X\times Y)$,
defined by \eqref{eq_intro_weak_convergence_definition},
is metrized by the
\emph{Wasserstein distances}
\cite{villani_topics, santambrogio_2015_OT_for_app}
\begin{alignat*}{4}
&W_p(\pi_1,\pi_2) &&= && \; \left( \min_{\sigma \in \mathcal{A}(\pi_1,\pi_2)}{
    \int_{Z\times Z}{d_Z^{\,p}(z_1,z_2) \;d\sigma(z_1,z_2)}
    } \right)^{1/p}
&&\text{ for } {1\leq p < \infty},
\end{alignat*}
where $Z = X\times Y$ and $d_Z = d_{X \times Y}$.
Also, the boundedness of $Y$ together with the finiteness of $\mu$ implies that the
set of Borel maps $\mathcal{B}(X,Y)$ agrees with
\[ \textstyle
L^p(\mu,Y) =
\Big\{
T\in \mathcal{B}(X,Y) \,:\,
\bigintssss_{X}{d_Y^{\,p}(T(x),\mspace{1mu}y_0) \;d\mu(x)} < \infty
\,\text{ for all $y_0\in Y$}
\Big\}
\]
for $1\leq p < \infty$.
Then, maps in $\mathcal{B}(X,Y)$ can be compared with the pseudometrics
\begin{alignat*}{4}
&d_p(T_1,T_2) &&= && \; \left( \int_{X}{d_Y^{\,p}(T_1(x),\mspace{1mu}T_2(x)) \;d\mu(x)} \right)^{1/p}
&&\text{ for } {1\leq p < \infty},
\end{alignat*}
which turn into true metrics after identifying maps agreeing $\mu$-a.e.

Since we want to approximate $(X,Y,\mu,\nu,c)$ to arbitrary precision we
consider an iterative procedure: in each step $k$ we compute
an $\varepsilon_k$-optimal $h_k$-plan $\pi_k$ by solving an instance
of \eqref{kantorovich_discrete_problem} and then we extract a projection map
$T_k$ from $\pi_k$, taking care to choose $h_k$ and $\varepsilon_k$
so that $h_k, \varepsilon_k \longconverges[k] 0$.
Then
$(\pi_k)_k$ is an \emph{approximating sequence of discrete plans}
and $(T_k)_k$ is an \emph{approximating sequence of projection maps}.
Our first result,
\Cref*{introtheorem_conv_values_plans} below,
asserts that $(\pi_k)_k$ converges in $W_p$ to the set
\[
\minsetkantorovich = \big\{\mspace{2mu}
    \pi \in \mathcal{A}(\mu,\nu)
        \mspace{2mu}:\mspace{2mu} \Koperator[\pi] = \Koperator[*]
\mspace{2mu}\big\}
\]
containing all the minimizers for \eqref{kantorovich_problem}.
In a general setting this would immediately imply the convergence
of the values $\Koperator[\pi_k]$ to $\Koperator[*]$, but
due to our assumptions we also get a bound on the convergence rate.
Our second result, \Cref*{introtheorem_conv_maps},
ensures the convergence of $T_k$ to $T_*$ in $d_p$ provided that
\ref{strong_uniqueness_hypothesis} holds.

\begin{introtheorem}
\label{introtheorem_conv_values_plans}
Let $(X,Y,\mu,\nu,c)$ be a compact OT problem and let $\Koperator[*]$
be the optimal value of its Kantorovich problem.
Then, every $\varepsilon$-optimal $h$-plan $\pi_h$ satisfies
\begin{equation}
\label{eq_intro_cuantitative_convergence_values}
\big\rvert\mspace{2mu} \Koperator[\pi_h] - \Koperator[*] \big\rvert
\leq \omega_c(h) + \varepsilon
\end{equation}
and for every approximating sequence of discrete plans $(\pi_k)_k$ we have
\[
\Koperator[\pi_k] \longconverges[k] \Koperator[*]
\mspace{10mu}\text{ and }\mspace{15mu}
W_p(\pi_k, \minsetkantorovich) \longconverges[k]0
\mspace{10mu}\text{ for all $1\leq p < \infty$.}
\]
In particular, when $(X,Y,\mu,\nu,c)$ also satisfies
\ref{kantorovich_uniqueness_hypothesis}
with optimal transport plan $\pi_*$ we get
$W_p(\pi_k,\pi_*)\longconverges[k]0$ for all $1\leq p < \infty$.
\end{introtheorem}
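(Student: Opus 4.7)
The plan is to first prove the quantitative bound \eqref{eq_intro_cuantitative_convergence_values} by building two natural bridges between plans on $(X,Y,\mu,\nu,c)$ and $h$-plans on its discretization; the convergence statements will then follow from weak-compactness arguments. To obtain the upper bound on $\Koperator[\pi_h]$ I would discretize an optimal plan $\pi_* \in \minsetkantorovich$ (whose existence is the standard direct-method result) by setting $\widetilde{\pi}_{ij} \defeq \pi_*[E_i \times F_j]$. The marginal constraints on $\pi_*$ immediately yield $\widetilde{\pi}_h \in \mathcal{A}(\mu_h,\nu_h)$, so $\widetilde{\pi}_h$ is feasible for \eqref{kantorovich_discrete_problem}. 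Since $d_{X \times Y}((x,y),(x_i,y_j)) \leq h$ for every $(x,y) \in E_i \times F_j$, uniform continuity of $c$ gives
\[
\bigl| \Koperator[\widetilde{\pi}_h] - \Koperator[\pi_*] \bigr|
= \biggl| \sum_{i,j} \int_{E_i \times F_j} \bigl( c(x_i,y_j) - c(x,y) \bigr) \, d\pi_*(x,y) \biggr|
\leq \omega_c(h),
\]
and the $\varepsilon$-optimality of $\pi_h$ then yields $\Koperator[\pi_h] \leq \Koperator[\widetilde{\pi}_h] + \varepsilon \leq \Koperator[*] + \omega_c(h) + \varepsilon$.

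For the reverse inequality I would lift $\pi_h$ back to $\widehat{\pi} \in \mathcal{A}(\mu,\nu)$ by replacing each weight $\pi_{ij}$ (for $\mu_i \nu_j > 0$) by a scaled product of restricted measures,
\[
\widehat{\pi} \defeq \sum_{i,j\,:\,\mu_i \nu_j > 0} \frac{\pi_{ij}}{\mu_i \nu_j}\, \mu|_{E_i} \otimes \nu|_{F_j}\mspace{2mu}.
\]
Verifying the marginals reduces to the feasibility equations $\sum_j \pi_{ij} = \mu_i$ and $\sum_i \pi_{ij} = \nu_j$ (after noting $\pi_{ij} = 0$ whenever $\mu_i \nu_j = 0$), so $\widehat{\pi} \in \mathcal{A}(\mu,\nu)$. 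The same modulus-of-continuity estimate yields $|\Koperator[\widehat{\pi}] - \Koperator[\pi_h]| \leq \omega_c(h)$, whence $\Koperator[*] \leq \Koperator[\widehat{\pi}] \leq \Koperator[\pi_h] + \omega_c(h)$. Combining both bounds establishes \eqref{eq_intro_cuantitative_convergence_values}.

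For the convergence claims let $(\pi_k)_k$ be an approximating sequence; since $\omega_c(h_k), \varepsilon_k \to 0$, the bound above immediately gives $\Koperator[\pi_k] \to \Koperator[*]$. To obtain $W_p(\pi_k, \minsetkantorovich) \to 0$ I would exploit that $X \times Y$ is compact, so $\mathcal{P}(X \times Y)$ is sequentially compact in the weak topology and each $W_p$ metrizes it. Extract a subsequence $\pi_{k_m} \weaklyconverges \pi$; since $\mu_{h_k} \weaklyconverges \mu$ and $\nu_{h_k} \weaklyconverges \nu$ (which follow from uniform continuity of test functions on $X$ and $Y$), passing marginals to the limit yields $\pi \in \mathcal{A}(\mu,\nu)$. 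Continuity of $c$ gives $\Koperator[\pi_{k_m}] \to \Koperator[\pi]$, and together with $\Koperator[\pi_{k_m}] \to \Koperator[*]$ this forces $\pi \in \minsetkantorovich$. A standard subsequence-contradiction argument then delivers $W_p(\pi_k, \minsetkantorovich) \to 0$, and under \ref{kantorovich_uniqueness_hypothesis} the set $\minsetkantorovich = \{\pi_*\}$ is a singleton, yielding $W_p(\pi_k, \pi_*) \to 0$.

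The only technically delicate step is the lifting construction of $\widehat{\pi}$: one has to verify via Fubini that it is a well-defined probability measure with the correct marginals, while carefully handling cells of zero mass. Everything else reduces to uniform continuity of $c$ combined with the essentially automatic weak-compactness machinery on compact ground spaces, so the sharpness (matching $\omega_c(h)$ exactly) of the cost bound is what I would consider the real content of the statement.
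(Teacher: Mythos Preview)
Your proof is correct and follows essentially the same approach as the paper. Your lifted measure $\widehat{\pi}$ is exactly what the paper calls the \emph{continuous version} $\continuous{\pi_h}$ of $\pi_h$ (Definition~\ref{definition_continuous_pi_h} and Proposition~\ref{proposition_continuous_pi_h}), your discretization $\widetilde{\pi}_h$ of $\pi_*$ is identical to the paper's, and your subsequence--contradiction argument for $W_p(\pi_k,\minsetkantorovich)\to 0$ is precisely the content of item~\ref{proposition_stability_properties_OT_item_kantorovich} in Proposition~\ref{proposition_stability_properties_OT}, which the paper packages separately but proves the same way.
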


\begin{introtheorem}
\label{introtheorem_conv_maps}
Let $(X,Y,\mu,\nu,c)$ be a compact OT problem satisfying
\ref{strong_uniqueness_hypothesis} and let $T_*$ be an optimal transport map.
Then, if $(T_k)_k$ is an approximating sequence of projection maps,
we have $d_p(T_k,T_*)\longconverges[k]0$
for all $1\leq p < \infty$.
\end{introtheorem}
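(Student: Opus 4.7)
The plan is to establish convergence in $d_1$ and then bootstrap to all $1 \leq p < \infty$. Since $Y$ is compact, we have the pointwise bound $d_Y(T_k(x), T_*(x)) \leq \diam{Y}$, and an interpolation estimate of the form
\[
d_p(T_k,T_*)^p \leq \diam{Y}^{\,p-1} \, d_1(T_k,T_*)
\]
reduces the theorem to proving $d_1(T_k, T_*) \longconverges[k] 0$.

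The main obstacle is that $T_*$ need not be continuous, whereas \Cref{introtheorem_conv_values_plans} only yields $\pi_k \longweaklyconverges[k] \pi_* = (\operatorname{Id}_X \times T_*)_\# \mu$, testable against continuous bounded functions only. I would circumvent this via Lusin's theorem: fix $\varepsilon > 0$ and pick a compact $K \subset X$ with $\mu(X\setminus K) < \varepsilon$ on which $T_*$ is continuous; extend $T_*|_K$ via Tietze to a continuous surrogate $S : X \to Y$ with finite modulus of continuity $\omega_S$.

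The technical core is a cell-wise estimate that treats both projection recipes uniformly. On each cell $E_i$ of the $h_k$-partition of $X$, the value $y_i = T_k(x)$ for $x \in E_i$ is the barycenter (in the linear case) or the weighted geometric median of the probability $\sum_j (\pi_{ij}/\mu_i)\, \delta_{y_j}$. Its defining minimality combined with the triangle inequality yields an estimate
\[
d_Y(y_i, S(x_i)) \leq C \sum_j \tfrac{\pi_{ij}}{\mu_i} \, d_Y(y_j, S(x_i)),
\]
with $C = 1$ obtained from Jensen in the barycentric case and $C = 2$ in the median case, where the latter follows from $d_Y(y_i, S(x_i)) \leq \sum_j (\pi_{ij}/\mu_i)[d_Y(y_i, y_j) + d_Y(y_j, S(x_i))]$ together with the fact that $S(x_i)$ is an admissible competitor for the minimizer $y_i$. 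Multiplying by $\mu_i$, summing over $i$, and using $\omega_S(h_k)$ to pass from $S(x_i)$ to $S(x)$ for $x \in E_i$, I obtain
\[
d_1(T_k, S) \leq C \int_{X \times Y} d_Y(y, S(x)) \, d\pi_k(x, y) + \omega_S(h_k).
\]
Since $(x,y) \mapsto d_Y(y, S(x))$ is continuous and bounded on $X \times Y$, \Cref{introtheorem_conv_values_plans} forces the integral to converge to $\int d_Y(T_*(x), S(x)) \, d\mu(x) \leq \diam{Y} \cdot \varepsilon$. Combined with $d_1(S, T_*) \leq \diam{Y} \cdot \varepsilon$ and $\omega_S(h_k) \longconverges[k] 0$, sending $\varepsilon \to 0$ closes the argument.

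The hardest step is securing the cell-wise inequality uniformly for both projection recipes (and for any approximate-minimizer refinement the formal definition may allow), since it relies sensitively on the precise optimality property satisfied by the chosen projection; once that estimate is in place, the remainder is a bookkeeping argument comparing weak convergence against the continuous surrogate $S$.
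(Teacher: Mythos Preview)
Your overall strategy is correct and takes a genuinely different route from the paper's. The paper never builds a continuous surrogate~$S$; instead it introduces the \emph{semidiscrete version} $\semidiscrete{\pi_h}\in\mathcal{A}(\mu,\nu_h)$ of each discrete plan, applies the Nearness Lemma pointwise at $y=T_*(x)$ itself, splits the sum $\sum_j(\pi_{ij}/\mu_i)d_Y(y_j,T_*(x))$ into the indices where $d_Y(y_j,T_*(x))<\delta$ and those where it is $\geq\delta$, and integrates against the disintegration of $\semidiscrete{\pi_h}$ to obtain
\[
d_p(T_k,T_*)\leq A\,h_k+B\,\delta+C\,\semidiscrete{\pi_k}\big[\{(x,y):d_Y(y,T_*(x))\geq\delta\}\big]^{1/p}.
\]
Lusin's theorem enters only afterwards, inside a stability proposition showing that the last term tends to~$0$. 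Your approach front-loads Lusin and tests weak convergence of $\pi_k$ directly against a continuous integrand, which is more elementary in that it avoids the semidiscrete construction and the good/bad decomposition entirely.

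There is, however, a genuine gap. You invoke Tietze to extend $T_*|_K$ to a continuous $S:X\to Y$, but Tietze does not apply to maps into an arbitrary compact metric space: take $X=[0,1]$ with $\mu=\mathcal{L}^1$, $Y=\{0,1\}$, $\nu=\tfrac12\delta_0+\tfrac12\delta_1$ and $c(x,y)=|x-y|$. Then \ref{strong_uniqueness_hypothesis} holds with $T_*=\mathbbm{1}_{[1/2,1]}$, yet every continuous map $[0,1]\to\{0,1\}$ is constant, so no continuous $S:X\to Y$ agrees with $T_*$ on a set of measure close to~$1$. This matters precisely in your competitor step for the GM case: you need $S(x_i)\in Y$ to be admissible in the minimization defining $y_i$. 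The cleanest repair that preserves your outline is to apply Tietze to the real-valued function $g(x,y)=d_Y(y,T_*(x))$, which is continuous on the closed set $K\times Y$ and extends to a continuous $G:X\times Y\to[0,\diam{Y}]$. Apply the Nearness Lemma at $y=T_*(x)\in Y$ (always valid) to get $d_Y(T_k(x),T_*(x))\leq Qh_k + C\sum_j(\pi_{ij}/\mu_i)\,g(x,y_j)$ for $x\in E_i$; replace $g$ by $G$ at the cost of a $\diam{Y}\cdot\mu(X\setminus K)$ error, then shift $x$ to $x_i$ at the cost of $\omega_G(h_k)$, and you land on $C\int G\,d\pi_k$ exactly as you intended. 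Alternatively you can embed $Y$ isometrically into a Banach space via Kuratowski, extend there by Dugundji, and use a nearest-point projection back to $Y$ as the competitor (at the price of a worse constant); either fix works, but neither is automatic and should be made explicit.
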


Since we are working with fully discrete approximations of $(X,Y,\mu,\nu,c)$
it could be argued that formulating
\Cref*{introtheorem_conv_maps}
in terms of $d_p(T_k,T_*)$
is somewhat unsatisfactory as in general $\mu$ is a non-discrete measure.
In fact, a common approach
\cite{oberman_ruan_2020,li_nochetto_2021}
in a setting like ours is to approximate
$d_p(T_k,T_*)$ by something like
\begin{equation}
\label{eq_intro_definition_disc_p}
\operatorname{disc}_p(T_k,T_*)
= \left(\sum\nolimits_{i\in I}{
    \mu_i \mspace{2mu} d^{\,p}_Y(T_k(x_i),T_*(x_i))
}\right)^{1/p}
\approx \mspace{6mu} d_p(T_k,T_*) \mspace{1mu},
\end{equation}
where the points $x_i$ and the weights $\mu_i=\mu[E_i]$ come from the
$h_k$-partition of $X$ used for constructing $T_k$.
Naturally, this leads to ask if the conclusion in
\Cref*{introtheorem_conv_maps}
holds when replacing $d_p(T_k,T_*)$ by $\operatorname{disc}_p(T_k,T_*)$.
It turns out that the answer is yes provided the map $T_*$ is
mildly discontinuous from the point
of view of $\mu$, in the sense that its discontinuity set
$D(T_*)$ is $\mu$-null.

\setcounter{introcorollary}{1}
\begin{introcorollary}
\label{introcorollary_fully_discrete_version_convergence_maps}
The condition $\mu[D(T_*)]=0$ is both necessary and sufficient for the
conclusion in \Cref*{introtheorem_conv_maps} to hold
when $d_p(T_k,T_*)$ is replaced by $\operatorname{disc}_p(T_k,T_*)$.
\end{introcorollary}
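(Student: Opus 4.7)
The plan is to reformulate $\operatorname{disc}_p(T_k,T_*)$ as a genuine $L^p$-distance between $T_k$ and a step-function approximation of $T_*$, and then deduce both directions from that identity together with \Cref*{introtheorem_conv_maps}.

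Given an approximating sequence of projection maps $(T_k)_k$ arising from $h_k$-partitions $\{(E_i^{(k)}, x_i^{(k)})\}_{i \in I_k}$, I would introduce the step function $\widetilde{T}_k : X \to Y$ defined by $\widetilde{T}_k \equiv T_*(x_i^{(k)})$ on $E_i^{(k)}$. Since every projection map is constant on each cell with $T_k \equiv T_k(x_i^{(k)})$ on $E_i^{(k)}$, unfolding the definition of $\operatorname{disc}_p$ in \eqref{eq_intro_definition_disc_p} gives the identity
\begin{equation*}
    \operatorname{disc}_p(T_k,T_*)^p
    = \sum_{i \in I_k} \mu_i^{(k)}\, d_Y^{\,p}\big(T_k(x_i^{(k)}),T_*(x_i^{(k)})\big)
    = \int_X d_Y^{\,p}\big(T_k(x),\widetilde{T}_k(x)\big)\, d\mu(x)
    = d_p(T_k,\widetilde{T}_k)^p.
\end{equation*}
Combining this with the $L^p(\mu,Y)$ triangle inequality and \Cref*{introtheorem_conv_maps} yields $\big\lvert \operatorname{disc}_p(T_k,T_*) - d_p(\widetilde{T}_k, T_*) \big\rvert \leq d_p(T_k,T_*) \to 0$, so the desired convergence $\operatorname{disc}_p(T_k,T_*) \to 0$ is equivalent to $d_p(\widetilde{T}_k, T_*) \to 0$.

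For sufficiency, assume $\mu[D(T_*)] = 0$. Then $\mu$-a.e.\ $x \in X$ is a continuity point of $T_*$, and at such $x$ the designated point $x_{i(k)}^{(k)}$ of the cell containing $x$ satisfies $d_X(x_{i(k)}^{(k)}, x) \leq h_k \to 0$; hence $\widetilde{T}_k(x) = T_*(x_{i(k)}^{(k)}) \to T_*(x)$. Therefore $\widetilde{T}_k \to T_*$ pointwise $\mu$-a.e., and since $d_Y$ is bounded (as $Y$ is compact), dominated convergence delivers $d_p(\widetilde{T}_k, T_*) \to 0$ for every $1 \leq p < \infty$.

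For necessity, assume $\mu[D(T_*)] > 0$. I would stratify the discontinuity set as $D(T_*) = \bigcup_{m \geq 1} D_m$, where $D_m$ consists of those $x$ at which the oscillation of $T_*$ is at least $1/m$, fix $m$ with $\mu(D_m) > 0$, and pass to a compact $K \subset D_m$ with $\mu(K) > 0$. I would then exhibit an approximating sequence of $h_k$-partitions in which, for every cell $E_i^{(k)}$ meeting $K$ substantially, the designated point $x_i^{(k)}$ is selected inside $K$ so that $T_*(x_i^{(k)})$ stays at $d_Y$-distance at least $1/(4m)$ from $T_*$ on a uniformly positive fraction of $E_i^{(k)} \cap K$. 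By the very definition of $D_m$, such choices are feasible once $h_k$ is small, and they force $d_p(\widetilde{T}_k,T_*)$ to stay uniformly away from zero, hence $\operatorname{disc}_p(T_k,T_*) \not\to 0$ along this sequence. The main obstacle is in this necessity step: the adversarial selection of designated points must be carried out measurably, and the resulting maps must come from legitimate $\varepsilon_k$-optimal $h_k$-plans. The latter is automatic, since the solvability of \eqref{kantorovich_discrete_problem} is unaffected by which representative $x_i^{(k)}$ one picks inside $E_i^{(k)}$; the former I expect to handle by a standard measurable selection argument exploiting the oscillation characterization of $D_m$.
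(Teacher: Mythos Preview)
Your reduction $\operatorname{disc}_p(T_k,T_*)=d_p(T_k,\widetilde T_k)$ and the resulting equivalence with $d_p(\widetilde T_k,T_*)\to 0$ is correct and very clean. For sufficiency your argument (pointwise a.e.\ convergence of $\widetilde T_k$ to $T_*$ plus dominated convergence) is valid and in fact shorter than the paper's, which instead bounds $\operatorname{disc}_p(T_h,T_*)\le 2\,d_p(T_h,T_*)+2\big(\sum_i\mu_i\,\operatorname{osc}_{T_*}^{\,p}(E_i)\big)^{1/p}$ and controls the oscillation sum via inner regularity.

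The necessity step, however, has a genuine gap. First, insisting that the designated point lie in $K$ and that $T_*(x_i^{(k)})$ be far from $T_*$ on a positive fraction of $E_i^{(k)}\cap K$ cannot work in general: take $K$ a fat Cantor set in $[0,1]$, $\mu$ Lebesgue, and $T_*=\mathbbm{1}_K$; then every point of $K$ has oscillation $1$, yet $T_*$ is \emph{constant} on $K$, so your claimed separation on $E_i^{(k)}\cap K$ is zero for any choice $x_i^{(k)}\in K$. Second, and more structurally, your justification ``by the very definition of $D_m$, such choices are feasible once $h_k$ is small'' conflates oscillation on balls with oscillation on cells: $x\in D_{1/m}$ says $\operatorname{osc}_{T_*}(B_r(x))\ge 1/m$ for all $r$, but a cell of diameter $\le h_k$ containing $x$ need not contain any ball around $x$, so its oscillation can be small. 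What is actually needed is a partition whose cells covering $K$ each \emph{contain} a ball centred at a point of $K$; the paper proves exactly this geometric lemma (its Lemma~5.6). Once that is in place, one can pick in each such cell two points $x_i^1,x_i^2$ with $d_Y(T_*(x_i^1),T_*(x_i^2))\ge\beta$ and then either run a pigeonhole argument on all of $E_i$ (not on $E_i\cap K$) to get a single bad sequence, or do as the paper does: build \emph{two} approximating sequences with the same cells but designated points $x_i^1$ and $x_i^2$, bound $\delta\le\operatorname{disc}_p(T_k^1,T_*)+\operatorname{disc}_p(T_k^2,T_*)+d_p(T_k^1,T_k^2)$ by the triangle inequality, and use $d_p(T_k^1,T_k^2)\to 0$ from Theorem~\ref*{introtheorem_conv_maps} to conclude one of the two fails. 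Finally, the measurable-selection worry is a red herring: the partitions are finite, so the choices are finitely many.
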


A few comments are in order.
First, notice that the set $D(T_*)$ is $\mu$-measurable since it
is always an $F_\sigma$ set and then Borel.
Second, the points $x_i$ and the weights $\mu_i$ in
\eqref{eq_intro_definition_disc_p}
vary with $k$ since each map $T_k$ comes from a different $h_k$-partition.
Third, it is known that the condition $\mu[D(T_*)]=0$ holds for a variety of
problems on $\mathbb{R}^d$ and Riemannian manifolds.
Indeed,
after the theory of partial regularity for OT maps
developed by Figalli, Kim and De Philippis
\cite{figalli_regularity_plane,
figalli_kim_partial_regularity,
figalli_de_philippis_partial_regularity},
we know that under mild regularity assumptions on $c$, $\mu$ and $\nu$
there are closed sets $\Sigma_X\subset X$, $\Sigma_Y\subset Y$ with
$\mu[\Sigma_X] = 0 =\nu[\Sigma_Y]$
such that
$T_*:X\backslash\Sigma_X\to Y\backslash\Sigma_Y$ is a homeomorphism.

\subsection*{Related Work and Our Contribution}

Experimental and theoretical work has been done on the approximation
of optimal values
\cite{delanoue_et_al_2016, gerber_maggioni_2017, bartels_hertzog_2022}
and optimal transport maps
\cite{oberman_ruan_2020,
li_nochetto_2021,
deb_ghosal_sen_2021,
manole_et_al_2022}
with discretization schemes similar to ours.
However, all these works are limited to Euclidean spaces
and in most cases they are also limited to the quadratic cost $c(x,y)=|x-y|^2$.
The exceptions are
the experimental works \cite{gerber_maggioni_2017, oberman_ruan_2020}
where some costs of the form $c(x,y)=|x-y|^p$ with $p \geq 1$
are considered and the
work \cite{bartels_hertzog_2022} where the convergence of optimal values
is proved for $c\in C^{1,\alpha}(X \times Y)$.
To our knowledge, neither theoretical nor empirical work has been done on the
discrete approximation of optimal transport at the level of generality
considered here.
In particular, we are not aware of previous work proposing the use of a
\emph{geometric median projection}
as a tool akin to the well known barycentric projection
\cite{ambrosio_gigli_savare}
for the study of OT on spaces without a linear structure.

When comparing
\Cref*{introtheorem_conv_values_plans}
with previously existing results on the convergence of optimal values
the most similar seem to be those obtained in
\cite{delanoue_et_al_2016, bartels_hertzog_2022}.
In the case of \cite{delanoue_et_al_2016}
the authors apply interval analysis
\cite{moore_interval_analysis_methods_applications,
hansen_walster_interval_analysis}
to study a discretization scheme similar to ours, obtaining
a convergence result for problems on $\mathbb{R}^1$
with absolutely continuous measures and $c\in C(\mathbb{R}^1\times \mathbb{R}^1)$.
A different, but still similar to ours, is the approach adopted
in \cite{bartels_hertzog_2022},
where $X$ and $Y$ are assumed to be triangulable domains in $\mathbb{R}^d$.
In this case the authors apply nodal interpolation estimates from
the theory of finite elements
\cite{brenner_scott_fem}
to deduce quantitative convergence in the sense of
\eqref{eq_intro_cuantitative_convergence_values},
obtaining rates of order $h^{1+\alpha}$
for $c\in C^{1,\alpha}(X \times Y)$.
We also point out that even though the main focus in
\cite{gerber_maggioni_2017} is computational,
the authors managed to prove something
\cite[eq.\ (13)]{gerber_maggioni_2017}
similar to a one-sided version of \eqref{eq_intro_cuantitative_convergence_values}.

Concerning the convergence of maps in the sense of
\Cref*{introtheorem_conv_maps}
or
\Cref*{introcorollary_fully_discrete_version_convergence_maps}
we highlight the works
\cite{li_nochetto_2021, oberman_ruan_2020}.
In the work \cite{li_nochetto_2021}
the authors %
assume quadratic cost and Lipschitz $T_*$
to derive quantitative estimates which imply that
$\operatorname{disc}_2(T_h,T_*)$ is $O(\sqrt{h})$
under those assumptions.
The work \cite{oberman_ruan_2020}, in turn, is more computationally oriented
but still relevant for our purposes for two main reasons:
first because their numerical experiments in $\mathbb{R}^d$ are consistent
with our convergence results
and second because they suggest a possible strategy
\cite[Remark 2.2]{oberman_ruan_2020}
for proving the convergence of (what we call) projection maps
from general results on barycentric projections
\cite{ambrosio_gigli_savare}.
Such a strategy, however, would work only for linear
$Y$ and convex $c(x,y)$.

We conclude this overview of related results by mentioning the works
\cite{deb_ghosal_sen_2021,
manole_et_al_2022},
where
the authors study a discretization scheme similar to ours
that also consists in solving an instance of
\eqref{kantorovich_discrete_problem}
and using the barycentric projection to approximate $T_*$.
Their approach differs from ours in that they work with empirical measures
instead of discretizing $\mu$ and $\nu$ as we did in
\eqref{eq_intro_discrete_measures_u_v},
so all their results are stochastic in nature.
Precisely, they start by considering i.i.d.\ random variables
$(X_i)_i \sim \mu \in \mathcal{P}(\mathbb{R}^d)$
and $(Y_j)_j \sim \nu \in \mathcal{P}(\mathbb{R}^d)$
in order to approximate $\mu$ and $\nu$ by
\[
\mu_n = \sum\nolimits_{i=1}^{n}{\tfrac{1}{n} \mspace{2mu} \delta_{X_i}}
\;\;\text{ and }\;\;
\nu_m = \sum\nolimits_{j=1}^{m}{\tfrac{1}{m} \mspace{2mu} \delta_{Y_j}}.
\]
Then, they obtain upper bounds for
$\mathbb{E}\big[\|T_{n,m}-T_*\|_{L^2(\mu_n)}\big]$
of order $n^{-\alpha}+m^{-\alpha}$,
where the exponent $\alpha>0$ depends on the dimension $d$
and $T_{n,m}$ is the
approximation of $T_*$ obtained by working with $\mu_n$ and $\nu_m$.
As in \cite{li_nochetto_2021},
the authors assume quadratic cost and Lipschitz regularity for $T_*$.

\subsection*{Outline}

We start with a discussion on our discretization scheme in
\Cref{section_discrete_approximation}, where we also introduce all the
notation and lemmas needed to work with $h$-plans and projection maps.
To emphasize the practical relevance of our contribution
we dedicate the last part of \Cref{section_discrete_approximation}
to show that projection maps can be effectively calculated
on general compact spaces and give an upper bound on the complexity of their computation.
Then we prove Theorems
\ref{introtheorem_conv_values_plans}
and
\ref{introtheorem_conv_maps}
in Sections
\ref{section_convergence_plans_values}
and
\ref{section_convergence_projection_maps_I}
respectively,
where we also discuss their sharpness and illustrate it by examples.
Finally,
we discuss and prove
\Cref{introcorollary_fully_discrete_version_convergence_maps}
in \Cref{section_convergence_projection_maps_II}.
The stability properties of OT on which our main results depend are
established in the \hyperref[section_stability_analysis_OT]{Appendix}.

\section{Discrete Approximation of %
    \texorpdfstring{$(X,Y,\mu,\nu,c)$}{(X, Y, \unicodemu, \unicodenu, c)}%
}
\label{section_discrete_approximation}

We stick to the definitions of $h$-partitions, $h$-approximations and $h$-plans
presented in
\nameref{section_introduction}
and insist on using the generic notation
\begin{equation}
\label{eq_generic_notation_partitions}
\begin{gathered}
    \mathcal{C}_X = \big\{ (E_i, x_i) \big\}_{i\in I} \;,\;\;
    \mathcal{C}_Y = \big\{ (F_j, y_j) \big\}_{j\in J} \;,\;\;
    \mu_i = \mu[E_i] \;,\;\;
    \nu_j = \nu[F_j] \;,\;\; \\
    \mu_h = \sum_{i\in I}{\mu_i \mspace{1mu} \delta_{x_i}}  \;,\;\;
    \nu_h = \sum_{j\in J}{\nu_j \mspace{1mu} \delta_{y_j}} \;,\;\;
    c_{ij} = c(x_i,y_j) \;,\;\;
    \pi_h = (\pi_{ij})_{ij}
\end{gathered}
\end{equation}
when working with them.
The parameter $h\geq 0$ should be interpreted as a quality parameter for the
$h$-partitions, which by definition satisfy
\[
\max{ \left\{
    \max_{i\in I}{\diam{E_i}},\, \max_{j\in J}{\diam{F_j}}
\right\}}
\leq h.
\]
Note that the case $h=0$ can only happen
for finite $X,Y$ since we require $I,J$ to be finite,
and in such case we get $\mu=\mu_h=\mu_0$ and $\nu=\nu_h=\nu_0$.
In general, we have $\mu_h,\nu_h \longweaklyconverges[h] \mu,\nu$
for $h\to 0$.
See
\Cref{remark_weak_convergence_marginals}
for a precise statement of this fact.

\begin{remark}
For our purposes we could have defined $h$-partitions by requiring
the sets to be only ``essentially disjoint'', in the sense that
\[
\mu[E_{i_1} \cap E_{i_2}] = 0 = \nu[F_{j_1} \cap F_{j_2}]
\;\;\text{ for $i_1\neq i_2$ and $j_1\neq j_2$}.
\]
We avoided this to prevent nonessential technicalities,
but for that same reason we will consider
$h$-partitions with only essentially disjoint sets in some examples.
\end{remark}

\begin{remark}
\label{remark_outer_approximation_of_X_Y}
Notice that we have not required either $\supp{\mu}=X$ or $\supp{\nu}=Y$.
Therefore, the usual practice of approximating a domain in $\mathbb{R}^d$
by a union of cubes is fully contained in our approach.
If we cover $X,Y \subset \mathbb{R}^d$ with finite families of
``simple'' sets like (disjoint) cubes, boxes or simplices, say
\begin{equation*}
\{S_i\}_{i\in I}
\;,\;
\{S_j\}_{j\in J}
\;\text{ with }\;
X \subset X_0 \defeq \bigsqcup\nolimits_{i}{S_i}
\;\text{ and }\;
Y \subset Y_0 \defeq \bigsqcup\nolimits_{j}{S_j},
\end{equation*}
we may work with
$(X_0,Y_0,\mu,\nu,c)$ instead of $(X,Y,\mu,\nu,c)$ and all our results
would remain valid.
In addition, note that choosing $x_i\in S_i$ and $y_j\in S_j$
would immediately yield a pair of $h$-partitions
for $X_0,Y_0$ with $h=\max_{k\in I\cup J}{\diam{S_k}}$.
\end{remark}

For an $h$-plan $\pi_h = {(\pi_{ij})}_{ij}$
we define its \emph{support} by
\begin{align*}
\supp{\pi_h} &= \big\{ \; (i,j) \in I\times J \,: \;\; \pi_{ij}>0  \; \big\}, \\
\intertext{which shares notation with the support of $\pi_h$
as a measure on $X\times Y$, i.e.}
\supp{\pi_h}
    &= \big\{ \; (x_i,y_j) \in I\times J \,: \;\; \pi_{ij}>0  \; \big\}.
\end{align*}
It will always be clear from context
which of this two notions is being referred.
Since some of our constructions require to operate with $\supp{\pi_h}$
we define the sets
\begin{equation*}
\begin{aligned}
I_+(j) &= \big\{ \; i \in I \,: \;\; \pi_{ij}>0  \; \big\}
\;\;\text{ for $j \in J$, } \\[1.0ex]
I_+ &= \big\{ \; i \in I \,:\,
    \exists \, j \in J \;\; \text{with} \;\; \pi_{ij}>0  \; \big\}
= \,\bigcup\nolimits_{j\in J}{I_+(j)}\,.
\end{aligned}
\end{equation*}
The sets $J_+(i)$ for $i \in I$ and $J_+$  are defined analogously, so
\begin{equation}
\label{eq_decomposition_of_support}
\operatorname{supp}(\pi_h)
= \bigsqcup_{j\in J_+}{I_+(j) \times \{ j \}}
= \bigsqcup_{i\in I_+}{\{ i \} \times J_+(i)}\,.
\end{equation}

\begin{remark}
\label{remark_measure_partitioning_by_I_plus_and_J_plus}
By definition, every $h$-plan $\pi_h$ is a feasible solution for
some instance of \eqref{kantorovich_discrete_problem} induced by a pair of
$h$-partitions $\mathcal{C}_X$, $\mathcal{C}_Y$.
Therefore, sticking to generic notation
\eqref{eq_generic_notation_partitions}
and taking into account the constraints in
\eqref{kantorovich_discrete_problem}, we obtain
\[
i\in I_+ \iff \mu[E_i]=\mu_i>0
\;\;\text{ and }\;\;
j\in J_+ \iff \nu[F_j]=\nu_j>0 \,.
\]
Then $\mu$ and $\nu$ are concentrated on $\bigsqcup_{i\in I_+}{E_i}$
and $\bigsqcup_{j\in J_+}{F_j}$ respectively, so we get
\begin{equation}
\label{eq_measure_partitioning_by_I_plus_and_J_plus}
\mu[E] = \sum\nolimits_{i\in I_+}{\mu[E\cap E_i]}
\;\;\text{ and }\;\;
\nu[F] = \sum\nolimits_{j\in J_+}{\nu[F\cap F_j]}
\end{equation}
for all Borel sets $E\in \mathcal{B}(X)$, $F\in \mathcal{B}(Y)$.
\end{remark}

\begin{definition}
\label{def_approximating_sequence_discrete_plans}
An \emph{approximating sequence of discrete plans} for $(X,Y,\mu,\nu,c)$
is any sequence $(\pi_k)_k \subset \mathcal{P}(X\times Y)$ such that
$\pi_k$ is an $\varepsilon_k$-optimal $h_k$-plan for every $k$
and the sequences $(\varepsilon_k)_k$ and $(h_k)_k$ satisfy
$\varepsilon_k \longconverges[k] 0$ and $h_k \longconverges[k] 0$.
\end{definition}

When $(Z,d_Z)$ is assumed Polish the existence of $h$-partitions for
arbitrary $h>0$ is equivalent to the compactness of $(Z,d_Z)$.
In particular, this implies the existence of approximating sequences of
discrete plans for compact OT problems.

\begin{lemma}
\label{lemma_existence_of_h_partitions}
Let $(X,Y,\mu,\nu,c)$ be an OT problem such that
$(X,d_X)$ and $(Y,d_Y)$ are Polish metric spaces.
Then the following statements are equivalent:
\begin{enumerate}[label=\roman*), ref=\emph{\roman*)}, topsep=-2pt, itemsep=2pt]
\item \label{lemma_existence_h_partitions_item_compact}
The spaces $(X,d_X)$ and $(Y,d_Y)$ are compact.
\item \label{lemma_existence_h_partitions_item_partition}
For every $h>0$ the problem $(X,Y,\mu,\nu,c)$ admits an optimal $h$-plan.
\item \label{lemma_existence_h_partitions_item_sequence}
There are approximating sequences of discrete plans for $(X,Y,\mu,\nu,c)$.
\end{enumerate}
\end{lemma}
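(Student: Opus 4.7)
The plan is to prove the cycle \ref{lemma_existence_h_partitions_item_compact} $\Rightarrow$ \ref{lemma_existence_h_partitions_item_partition} $\Rightarrow$ \ref{lemma_existence_h_partitions_item_sequence} $\Rightarrow$ \ref{lemma_existence_h_partitions_item_compact}. The connecting observation behind all three implications is that an $h$-partition of a metric space is essentially a finite Borel cover by sets of diameter $\leq h$, so the existence of $h$-partitions for all $h>0$ is equivalent to total boundedness.

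For \ref{lemma_existence_h_partitions_item_compact} $\Rightarrow$ \ref{lemma_existence_h_partitions_item_partition}, I would first exhibit an $h$-partition $\mathcal{C}_X$ of $X$ (and analogously $\mathcal{C}_Y$) for every $h>0$. By compactness, $X$ is totally bounded, so I cover it by finitely many open balls $B_1,\dots,B_N$ of radius $h/2$; setting $E_i = B_i \setminus \bigcup_{\ell<i} B_\ell$, discarding empty pieces and choosing $x_i \in E_i$ gives the required pointed Borel partition with $\diam{E_i}\leq h$. With $\mathcal{C}_X,\mathcal{C}_Y$ in hand, \eqref{kantorovich_discrete_problem} is a linear program on the transportation polytope $\mathcal{A}(\mu_h,\nu_h)\subset\mathbb{R}^{I\times J}$, which is non-empty (it contains the independent coupling $\pi_{ij}=\mu_i\nu_j$) and compact (bounded by $0\le\pi_{ij}\le 1$ and closed since defined by finitely many linear equalities and inequalities). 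A linear functional on a non-empty compact set attains its infimum, so an optimal $h$-plan exists.

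For \ref{lemma_existence_h_partitions_item_partition} $\Rightarrow$ \ref{lemma_existence_h_partitions_item_sequence} I simply take $h_k = 1/k$ and, using \ref{lemma_existence_h_partitions_item_partition}, pick an optimal (hence $0$-optimal) $h_k$-plan $\pi_k$. Setting $\varepsilon_k=0$, the sequence $(\pi_k)_k$ matches \Cref{def_approximating_sequence_discrete_plans}.

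The implication \ref{lemma_existence_h_partitions_item_sequence} $\Rightarrow$ \ref{lemma_existence_h_partitions_item_compact} is the one that really uses the Polish hypothesis. Given an approximating sequence $(\pi_k)_k$ with $h_k\to 0$, for each $h>0$ I choose $k$ large enough so that $h_k\leq h$; the $h_k$-partition $\mathcal{C}_X$ associated to $\pi_k$ is then a fortiori a finite Borel cover of $X$ by sets of diameter $\leq h$, witnessing that $X$ is totally bounded. Being a complete totally bounded metric space, $X$ is compact, and the same argument applies to $Y$.

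The only step that has any content beyond bookkeeping is the construction of $h$-partitions from a finite ball cover in \ref{lemma_existence_h_partitions_item_compact} $\Rightarrow$ \ref{lemma_existence_h_partitions_item_partition}, where one must be slightly careful to produce Borel, pairwise disjoint cells of diameter $\leq h$; the rest is essentially definitions plus the elementary observation that a linear program over a non-empty compact polytope admits a minimizer.
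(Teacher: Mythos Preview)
Your proof is correct and follows essentially the same cycle as the paper's, with the same key ingredients: constructing $h$-partitions from a finite ball cover by successive set differences, noting that \eqref{kantorovich_discrete_problem} is a linear minimization over a nonempty compact set, and using total boundedness plus completeness for the converse. The only cosmetic difference is that the paper arranges $x_i\in E_i$ by taking a \emph{minimal} cover (so each $B_i$ contains a point missed by all other balls), whereas you discard empty cells and pick $x_i\in E_i$ afterward; both work.
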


\begin{proof}

Implication
\ref{lemma_existence_h_partitions_item_partition}
$\mspace{-6mu}\implies\mspace{-6mu}$
\ref{lemma_existence_h_partitions_item_sequence}
is immediate from
\Cref{def_approximating_sequence_discrete_plans}.
The existence of an $h$-plan for $(X,Y,\mu,\nu,c)$ implies
that $X$ and $Y$ can be covered by a finite number of sets with diameters
bounded by $h$, so
\ref{lemma_existence_h_partitions_item_sequence}
implies that $X$ and $Y$ are totally bounded.
Then, implication
\ref{lemma_existence_h_partitions_item_sequence}
$\mspace{-6mu}\implies\mspace{-6mu}$
\ref{lemma_existence_h_partitions_item_compact}
follows since $X$ and $Y$ are complete by hypothesis.

Now assume
\ref{lemma_existence_h_partitions_item_compact},
fix $h>0$
and consider a finite, minimal covering of $X$ by balls
of diameters at most $h$, say $\{B_i\}_{1\leq i \leq n}$.
By minimality, we can take $x_i\in B_i$ laying outside
every other ball in the covering.
Then, taking $I=\{1,2,\dots,n\}$,
\[
E_{1} \defeq B_{1}
\;\;\text{ and }\;\;
E_{k} \defeq B_{k} \backslash \,
\big( B_{1} \cup \dots \cup B_{{k-1}} \big)
\; \text{ for $2\leq k \leq n$}
\]
we obtain an $h$-partition
$\mathcal{C}_X = \big\{(E_i,x_i)\big\}_{i\in I}$ for $X$.
Constructing also an $h$-partition $\mathcal{C}_Y$ for $Y$,
the resulting instance of
\eqref{kantorovich_discrete_problem}
consists in the minimization
of a linear functional on the nonempty compact set
$\mathcal{A}(\mu_h,\nu_h) \subset \mathbb{R}^{I\times J}$.
Since the existence of a minimizer $\pi_h$ for such a problem is immediate
then \ref{lemma_existence_h_partitions_item_partition} follows.
\end{proof}

\begin{remark}
\label{remark_weak_convergence_marginals}
For compact $(X,Y,\mu,\nu,c)$ and $(\pi_k)_k$ as in
\Cref{def_approximating_sequence_discrete_plans}, the marginals
$\mu_k$, $\nu_k$ of the measures $\pi_k$ define
sequences of $h_k$-approximations with $h_k \converges 0$ and
this implies $\mu_k, \nu_k \longweaklyconverges[k] \mu, \nu$.
By symmetry, it suffices to check $\mu_k \longweaklyconverges[k] \mu$.
To this end, fix $\phi \in C_b(X)$ and note that
$\omega_\phi(h)\to 0$ for $h\to 0$.
Fixing $k$, generic notation
\eqref{eq_generic_notation_partitions}
applied to the fixed $h_k$-partition yields $\diam{E_i}\leq h_k$ and then
\begin{equation}
\label{eq_proof_uk_vk_weakly_convergent_to_u_v_bound_E_i}
\left|
\int_{E_i}{\phi \,d\mu} - \int_{E_i}{\phi \,d\mu_k}
\right|
=
\left|
\int_{E_i}{\phi \,d\mu} - \int_{E_i}{\phi(x_i) \,d\mu}
\right|
\leq
\mu[E_i] \, \omega_\phi(h_k).
\end{equation}
As $h_k\longconverges[k]0$,
summing
\eqref{eq_proof_uk_vk_weakly_convergent_to_u_v_bound_E_i}
over $i\in I$ proves $\mu_k \longweaklyconverges[k]\mu$ since
\[
\left|
\int_{X}{\phi \,d\mu} - \int_{X}{\phi \,d\mu_k}
\right|
\leq
\sum\nolimits_{i}{\,\mu[E_i] \, \omega_\phi(h_k)}
= \omega_\phi(h_k) \longconverges[k] 0.
\qedhere
\]
\end{remark}

\bigsubsection{Geometric Medians and Barycenters}

We start by making a distinction between general metric spaces and a class
of metric spaces for which barycenters are well-defined.

\begin{definition}
A metric space $(Y, d_Y)$ is \emph{normed} if $Y$ is a convex subset of a
vector space $\mathbb{V}$ and $d_Y$ is induced by a norm
$\|\cdot\|$ on $\mathbb{V}$, i.e.\ $d_Y(y_1,y_2)=\|y_1-y_2\|$.
\end{definition}

Given points $y_1 , \dots , y_m$ in a normed metric space $(Y, d_Y)$
and nonnegative weights $w_1 , \dots , w_m$ with $\sum_{j}{w_j}=1$,
the \emph{barycenter of $(y_j)_j$ with weights $(w_j)_j$}
is the point $\sum_{j}{w_j y_j} \in Y \subset \mathbb{V}$.
For a general metric space $(Y, d_Y)$ weighted geometric medians will be
considered in lieu of barycenters.

\begin{definition}
Given points $(y_j)_j$ and nonnegative weights $(w_j)_j$ as above, let
\[
V(y) = \sum\nolimits_{j}{w_j \mspace{1mu} d_Y(y,y_j)}
\;\text{ for every $y\in Y$.}
\]
A point $\hat{y}\in Y$ is a
\emph{geometric median for $(y_j)_j$ with weights $(w_j)_j$}
if $\hat{y}$ is a minimizer for $V$,
while $\hat{y}$ is
an \emph{$\varepsilon$-approximate geometric median
for $(y_j)_j$ with weights $(w_j)_j$}
if $V(\hat{y}) \leq V(y) + \varepsilon$ for every $y\in Y$.
Clearly, \emph{$0$-approximate geometric median} is a synonym for
\emph{geometric median}.
\end{definition}

Geometric medians exist for compact $(Y, d_Y)$ but they might not be unique.
For example, if $(Y, d_Y)$ is a Riemannian manifold uniqueness follows when
the points $(y_j)_j$ do not lie on the same geodesic
\cite[Theorem 3.1]{yang_2010_riemannian_median}.
In contrast to barycenters, geometric medians cannot be computed by evaluating
an effective formula, so in practice they have to be approximated.
It is because of this practical difficulty that we also consider approximate
geometric medians.

Even though geometric medians can be efficiently approximated by the well known
Weiszfeld algorithm on Euclidean spaces
\cite{weiszfeld_1937,
plastria_2011_weiszfeld}
and Riemannian manifolds
\cite{fletcher_2008_robust_estimation_median},
for arbitrary spaces it might be hard or even impossible to devise
a general and efficient algorithm.
The next lemma shows that, in our context, good enough approximations can be
obtained by a finite computation for general $(Y, d_Y)$.
Recall that a set $A\subset Y$ is an \emph{$\varepsilon$-net}
if for every $y\in Y$ there is $a\in A$ with $d_Y(a,y)\leq \varepsilon$.
For example, the points $(y_j)_j$ in any $h$-partition $\{(F_j,y_j)\}_j$
for $Y$ define an $h$-net in $Y$.

\begin{lemma}
\label{lemma_varepsilon_GM}
For a compact metric space $(Y,d_Y)$, fix an $\varepsilon$-net
$A = \{y_j\}_{1\leq j\leq m} \subset Y$ and nonnegative weights
$w_1,\dots, w_m$ with $\sum_{j}{w_j}=1$.
Then
\begin{equation}
\label{minimization_varepsilon_GM}
\hat{y}_\varepsilon
\,\defeq \,\underset{y \in A}{\operatorname{argmin}}{\;V(y)}
\,= \,\underset{y \in A}{\operatorname{argmin}}{
    \;\sum\nolimits_{j=1}^{m}{w_j \mspace{1mu} d_Y(y,y_j)}
}
\end{equation}
is an $\varepsilon$-approximate geometric median for $(y_j)_j$
with weights $(w_j)_j$.
\end{lemma}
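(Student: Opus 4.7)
The plan is to exploit the $\varepsilon$-net property together with the $1$-Lipschitz nature of each map $y \mapsto d_Y(y, y_j)$, which makes $V$ itself $1$-Lipschitz as a convex combination of $1$-Lipschitz functions. Since $A$ is finite and nonempty, $\hat{y}_\varepsilon$ in \eqref{minimization_varepsilon_GM} is well-defined.

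First I would fix an arbitrary $y \in Y$ and, using that $A$ is an $\varepsilon$-net, pick some $a \in A$ with $d_Y(a, y) \leq \varepsilon$. Then by the triangle inequality applied pointwise, $d_Y(a, y_j) \leq d_Y(a, y) + d_Y(y, y_j) \leq \varepsilon + d_Y(y, y_j)$ for every $j$. Multiplying by $w_j$, summing over $j$, and using $\sum_j w_j = 1$ yields
\[
V(a) \,=\, \sum_{j=1}^{m}{w_j\, d_Y(a, y_j)} \,\leq\, \varepsilon + \sum_{j=1}^{m}{w_j\, d_Y(y, y_j)} \,=\, \varepsilon + V(y).
\]
Since $a \in A$ and $\hat{y}_\varepsilon$ minimizes $V$ over $A$, we conclude $V(\hat{y}_\varepsilon) \leq V(a) \leq \varepsilon + V(y)$. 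As $y \in Y$ was arbitrary, this is exactly the definition of $\hat{y}_\varepsilon$ being an $\varepsilon$-approximate geometric median for $(y_j)_j$ with weights $(w_j)_j$.

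There is essentially no hard step here; the content of the lemma is really just the triangle inequality combined with the $\varepsilon$-net property, plus the standard observation that convex combinations of $1$-Lipschitz functions are $1$-Lipschitz. The only thing worth being explicit about is that the minimization in \eqref{minimization_varepsilon_GM} is taken over the \emph{finite} set $A$, so the argmin is attained without any compactness argument on $Y$, which makes the construction genuinely computable — this is the practical payoff the lemma is designed to deliver.
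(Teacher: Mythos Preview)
Your proof is correct and rests on the same ingredients as the paper's --- the triangle inequality together with the $\varepsilon$-net property --- but the order of operations differs in a way worth noting. The paper first uses compactness of $Y$ to produce a true geometric median $\hat{y}$, then approximates $\hat{y}$ by some $y_{j_0}\in A$ and chains $V(\hat{y}_\varepsilon)\leq V(y_{j_0})\leq \varepsilon+V(\hat{y})\leq \varepsilon+V(y)$. You instead approximate an arbitrary $y\in Y$ directly by some $a\in A$ and chain $V(\hat{y}_\varepsilon)\leq V(a)\leq \varepsilon+V(y)$. Your route is marginally more economical: it never invokes the existence of a true median, so the compactness hypothesis on $Y$ is not actually used, and the lemma holds verbatim for any metric space admitting a finite $\varepsilon$-net.
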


\begin{proof}
Since $Y$ is compact, let $\hat{y} \in Y$ be a
weighted geometric median for $(y_j)_j$ with weights $(w_j)_j$.
Fixing $y_{j_0} \in A$ with $d_Y(\hat{y},y_{j_0}) \leq \varepsilon$,
for every $y\in Y$ we have
\begin{alignat*}{3}
\sum\nolimits_{j}{w_j d_Y(y_{j_0}, y_j)}
&\leq \sum\nolimits_{j}{w_j [d_Y(y_{j_0}, \hat{y}) + d_Y(\hat{y}, y_j)]}
\leq \sum\nolimits_{j}{w_j [\varepsilon + d_Y(\hat{y}, y_j)]} \\
&= \varepsilon + \sum\nolimits_{j}{w_j d_Y(\hat{y}, y_j)}
\leq \varepsilon + \sum\nolimits_{j}{w_j d_Y(y, y_j)}\,.
\end{alignat*}
Then, taking $\hat{y}_\varepsilon$ as in \eqref{minimization_varepsilon_GM},
for every $y\in Y$ we get
\begin{equation*}
V(\hat{y}_\varepsilon)
\leq \sum\nolimits_{j}{w_j d_Y(y_{j_0}, y_j)}
\leq \varepsilon + \sum\nolimits_{j}{w_j d_Y(y, y_j)}
= \varepsilon + V(y). \qedhere
\end{equation*}
\end{proof}

As a final note,
observe that geometric medians and barycenters are \emph{local},
in the sense that they remain near a point $y \in Y$
if the points $y_1 , \dots , y_m$ are clustered around that $y$.
We record this simple but useful idea in the following lemma,
which will be key for establishing the convergence of projection maps in
\Cref{section_convergence_projection_maps_I}.

\begin{lemma}[Nearness Lemma]
\label{nearness_lemma}
Consider points $y, y_1 , \dots , y_m$ in a metric space $(Y, d_Y)$ and
nonnegative weights $w_1 , \dots , w_m$ with $\sum_{j}{w_j}=1$.
\begin{enumerate}[%
label=\roman*), ref=\emph{\roman*)}, topsep=0pt, itemsep=2pt, leftmargin=25pt]
\item\label{nearness_lemma_item_general}
If $\hat{y}_\varepsilon$ is an $\varepsilon$-approximate geometric median for
$(y_j)_j$ with weights $(w_j)_j$, then
\[
d_Y(y,\hat{y}_\varepsilon)
\leq \varepsilon + 2\sum\nolimits_{j}{w_j \mspace{1mu} d_Y(y,y_j)}\,.
\]
\item \label{nearness_lemma_item_linear}
If $(Y,d_Y)$ is normed and $\hat{y}$ is the barycenter of $(y_j)_j$ with weights
$(w_j)_j$, then
\[
d_Y(y,\hat{y}) \leq \sum\nolimits_{j}{w_j \mspace{1mu} d_Y(y,y_j)}\,.
\]
\end{enumerate}
\end{lemma}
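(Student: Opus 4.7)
The plan is to handle the two parts independently; both are short and follow from the triangle inequality and the definitions, with the only real trick being to insert the geometric median itself as a pivot point when bounding $d_Y(y,\hat{y}_\varepsilon)$.

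For part \ref{nearness_lemma_item_linear} I would use that $(Y,d_Y)$ is normed, so $\hat{y} = \sum_j w_j y_j$ lies in $Y$ and the distance can be written directly. Since $\sum_j w_j = 1$, I can write $y = \sum_j w_j y$ and then
\[
d_Y(y,\hat{y}) = \Big\lVert \sum\nolimits_j w_j(y-y_j) \Big\rVert \leq \sum\nolimits_j w_j \lVert y-y_j \rVert = \sum\nolimits_j w_j \, d_Y(y,y_j),
\]
which is exactly the claimed bound. This part uses no optimality at all, only convexity of the norm.

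For part \ref{nearness_lemma_item_general} the key observation is that $d_Y(y,\hat{y}_\varepsilon)$ can be split through each $y_j$ by the triangle inequality: for every $j$,
\[
d_Y(y,\hat{y}_\varepsilon) \leq d_Y(y,y_j) + d_Y(y_j,\hat{y}_\varepsilon).
\]
Multiplying by $w_j$, summing over $j$ and using $\sum_j w_j = 1$ yields $d_Y(y,\hat{y}_\varepsilon) \leq V(y) + V(\hat{y}_\varepsilon)$, where $V$ is the functional in the definition of geometric median. Now I would invoke the approximate optimality of $\hat{y}_\varepsilon$, namely $V(\hat{y}_\varepsilon) \leq V(y) + \varepsilon$ (applied at the specific test point $y$ appearing in the statement, which is admissible because $V$ is defined on all of $Y$), to conclude
\[
d_Y(y,\hat{y}_\varepsilon) \leq V(y) + V(y) + \varepsilon = \varepsilon + 2 \sum\nolimits_j w_j \, d_Y(y,y_j).
\]

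There is no real obstacle here; the only subtle point worth emphasizing is the choice of test point in the defining inequality for $\hat{y}_\varepsilon$. Using the same $y$ that appears on the left-hand side (rather than some minimizer of $V$, which may not exist in full generality or may be hard to produce explicitly) is what makes the bound close up cleanly and gives the factor $2$ together with the additive $\varepsilon$. This also explains why part \ref{nearness_lemma_item_linear} enjoys the sharper constant $1$: the barycenter admits an explicit algebraic representation, whereas in the metric setting one must pay once for triangle inequality on each side of the pivot $y_j$.
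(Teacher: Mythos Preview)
Your proof is correct and essentially identical to the paper's: both parts use exactly the same triangle-inequality pivot through the $y_j$'s for part~\ref{nearness_lemma_item_general} followed by the approximate-optimality bound $V(\hat{y}_\varepsilon)\leq V(y)+\varepsilon$, and the same convexity-of-norm computation $\|\sum_j w_j(y-y_j)\|\leq\sum_j w_j\|y-y_j\|$ for part~\ref{nearness_lemma_item_linear}.
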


\begin{proof}
For \ref{nearness_lemma_item_general}, note that
\begin{equation*}
d_Y(y,\hat{y}_\varepsilon) \leq d_Y(y,y_j) + d_Y(y_j,\hat{y}_\varepsilon)
\;\;\text{ for $1\leq j\leq m$. }
\end{equation*}
Multiplying that inequality by $w_j$ and summing over $j$ yields
\begin{equation*}
d_Y(y,\hat{y}_\varepsilon)
\leq V(y) + V(\hat{y}_\varepsilon) \leq \varepsilon + 2 V(y) \,.
\end{equation*}
For \ref{nearness_lemma_item_linear},
since $d_Y$ is induced by the norm $\|\cdot\|$,
operating in $\mathbb{V} \supset Y$ yields
\begin{equation*}
d_Y(y,\hat{y})
    = \left\| \,y - \left(\sum\nolimits_{j}{w_j y_j}\right) \right\|
        = \left\| \sum\nolimits_{j}{w_j \left(y - y_j\right)} \right\|
    \leq \sum\nolimits_{j}{w_j \big\| y- y_j \big\|}
        = V(y)\,. \qedhere
\end{equation*}
\end{proof}

\bigsubsection{Projection Maps}

\begin{definition}
\label{definition_projection_maps}
Let $\pi_h = (\pi_{ij})_{ij}$ be an $h$-plan for $(X,Y,\mu,\nu,c)$
and let $(y_i)_{i\in I} \subset Y$ and $T\in\mathcal{B}(X,Y)$ be such that
$T|_{E_i}\equiv y_i$ for all $i\in I$.
\begin{enumerate}[%
label=\emph{\roman*}), topsep=0pt, itemsep=3pt, leftmargin=25pt]
\item \label{def_projection_maps_item_GM}
The map $T$ is a \emph{GM projection} if for some $\Qfact \geq 0$ and all
$i\in I_+$ the point $y_i$ is a
$\Qfact h$-approximate geometric median for
$(y_j)_{j\in J}$ with weights $\left( {\pi_{ij}} / {\mu_i} \right)_{j\in J}$.
\item \label{def_projection_maps_item_B}
When $(Y, d_Y)$ is normed, the map $T$ is a \emph{B projection} if
for all $i\in I_+$ the point $y_i$ is the barycenter of $(y_j)_{j\in J}$
with weights $\left({\pi_{ij}}/{\mu_i}\right)_{j\in J}$.
\end{enumerate}
The maps $T$ introduced by
\ref{def_projection_maps_item_GM}
and
\ref{def_projection_maps_item_B}
are \emph{projections extracted from $\pi_h$} and they define the sets
$\mathcal{T}_{GM}^{\Qfact}[\pi_h]$ and $\mathcal{T}_{B}[\pi_h]$ respectively.
If $T$ is a projection extracted from $\pi_h$ then $T$ is of
\emph{quality $\Qfact$} if
$T \in \mathcal{T}^{\Qfact}[\pi_h] \defeq
    \mathcal{T}_{GM}^{\Qfact}[\pi_h] \cup \mathcal{T}_{B}[\pi_h]$.
For $\Qfact=0$ we lighten the notation by defining
$\mathcal{T}_{GM}[\pi_h] \defeq \mathcal{T}_{GM}^{\,0}[\pi_h]$
and $\mathcal{T}[\pi_h] \defeq \mathcal{T}^{\,0}[\pi_h]$.
\end{definition}

\begin{remark}
\label{remark_projections_def}
The sets
$\mathcal{T}_{GM}^{\Qfact}[\pi_h]$,
$\mathcal{T}_{B}[\pi_h]$ and
$\mathcal{T}^{\Qfact}[\pi_h]$
depend on the parameter $h$,
on $h$-partitions $\mathcal{C}_X, \mathcal{C}_Y$
and on the $h$-plan $\pi_h$.
We avoid making this dependence explicit to keep notation light.
Note that $\mathcal{T}_{GM}^{\Qfact}[\pi_h]$ is nonempty
when $(Y, d_Y)$ is compact and that
$\mathcal{T}_{B}[\pi_h]$ is nonempty for normed $(Y, d_Y)$.
Lastly, notice that
\(
\mathcal{T}_{GM}^{\Qfact_1}[\pi_h]
\subset
\mathcal{T}_{GM}^{\Qfact_2}[\pi_h]
\)
for $0 \leq \Qfact_1 \leq \Qfact_2$, so projection maps of quality
$\Qfact_1$ are also of quality $\Qfact_2$.
\end{remark}

\begin{remark}
When $Y$ is a non-convex subset of a normed space $\mathbb{V}$ then
$(Y,d_Y)$ fails to be normed, but it may still make sense to extract
a B projection $T_h:X\to \mathbb{V}$ from $\pi_h$ if we are
willing to accept $T_h(X) \not\subset Y$.
This fits in with our approach: in the same spirit of
\Cref{remark_outer_approximation_of_X_Y},
we may enlarge $Y$ to its convex hull $\conv{Y}$ and work with
$(X,\conv{Y},\mu,\nu,c)$ instead of $(X,Y,\mu,\nu,c)$.
\end{remark}

\begin{definition}
\label{def_approximating_sequence_projection_maps}
An \emph{approximating sequence of projection maps} for $(X,Y,\mu,\nu,c)$
is a sequence $(T_k)_k \subset \mathcal{B}(X,Y)$ satisfying
$T_k \in \mathcal{T}^{\Qfact}[\pi_k]$ for all $k$, where
$(\pi_k)_k$ is some approximating sequence of discrete plans
and $\Qfact \geq 0$ is fixed.
\end{definition}

\begin{remark}
The existence of approximating sequences of projection maps
for a compact OT problem $(X,Y,\mu,\nu,c)$ is immediate from
\Cref{lemma_existence_of_h_partitions}
and
\Cref{remark_projections_def}.
In particular, such a sequence can be constructed by using B projections
for normed $(Y, d_Y)$, completely avoiding the computation of geometric medians.
Also, observe that $Q$ does not have to be ``small'' for
$(T_k)_k$ to satisfy
\Cref{def_approximating_sequence_projection_maps}.
Then, for any $\Qfact \geq 0$ we get
$T_k \longconverges[k] T_*$
according to
\Cref{introtheorem_conv_maps}.
At the end of this section we show that a sequence $(T_k)_k$ can be effectively
constructed for $\Qfact = 1$.
\end{remark}

\paragraph*{{\bfseries Computing Projection Maps}}
\pdfbookmark[3]{Computing Projection Maps}{Computing Projection Maps}

Computing a B projection for an $h$-plan $\pi_h$ requires the
calculation of linear combinations in a vector space $\mathbb{V} \supset Y$.
Precisely, $O(|J_+(i)|)$ vector space operations are needed
for each $i \in I_+$, yielding a total of
\[
\sum\nolimits_{i\in I_+}{O(|J_+(i)|)}
= O\left(\sum\nolimits_{i\in I_+}{|J_+(i)|}\right)
\stackrel[\eqref{eq_decomposition_of_support}]{}{=}
O\left(|\supp{\pi_h}|\right)
\]
vector space operations, which is $O(|I||J|)$ in the worst case.
In practice this bound improves to
$O(|I|+|J|)$
since, as explained in
\nameref{section_introduction},
any sensible solver for
\eqref{kantorovich_discrete_problem}
will produce a sparse solution $\pi_h$ with $|\supp{\pi_h}| \lesssim |I|+|J|$.
For example, when working in $\mathbb{V} = \mathbb{R}^d$ a total of
$O(d|I|+d|J|)$ sums and products of real numbers suffice to extract
a B projection from a sparse $h$-plan $\pi_h$.

Before estimating the computational cost of GM projections
note first that the minimization
in \eqref{minimization_varepsilon_GM} can be computed
whenever $d_Y$ can be effectively calculated.
As a consequence, a projection $T_h \in \mathcal{T}_{GM}^\Qfact[\pi_h]$
can be computed for $\Qfact = 1$ by repeatedly applying
\Cref{lemma_varepsilon_GM} to the $h$-net induced by
$\mathcal{C}_Y$.
This shows that GM projections can be implemented in practice when
the mathematical objects defining $(X,Y,\mu,\nu,c)$ can be represented
in a computer, as it happens with the usual barycentric projections,
which we call B projections after \Cref{definition_projection_maps}.

To derive an upper bound for the cost of computing a GM projection we start by
estimating the number of operations needed to evaluate
\eqref{minimization_varepsilon_GM}. To this end, write
\[
A= \{y_j :  1 \leq j \leq m\}
\;\text{ and }\;
A_+= \{y_j \in A : w_j>0\}
\]
and note that the brute-force evaluation of \eqref{minimization_varepsilon_GM}
requires $O(|A|^2)$ operations, including evaluations of $d_Y$ and
sums, products and comparisons of real numbers.
Note also that the sharper complexity bound $O(|A||A_+|)$ holds since
\eqref{minimization_varepsilon_GM}
consists in computing the minimum of $|A|$ sums with $|A_+|$ terms each.
When computing a GM projection by evaluating \eqref{minimization_varepsilon_GM}
this way, for each $i\in I_+$ we have
\[
A = \{ y_j : j \in J \}
\;\text{ and }\;
A_+= \{ y_j : j \in J_+(i) \},
\]
which yields $O(|J||J_+(i)|)$ operations.
Then, a GM projection requires
\[
\sum\nolimits_{i\in I_+}{O(|J||J_+(i)|)}
= O\left(|J|\sum\nolimits_{i\in I_+}{|J_+(i)|)}\right)
\stackrel[\eqref{eq_decomposition_of_support}]{}{=}
O\left(|J||\supp{\pi_h}|\right)
\]
operations. This is $O(|I||J|^2)$ in the worst case and improves to
$O(|I||J| + |J|^2)$ when $\pi_h$ is a sparse $h$-plan
with $|\supp{\pi_h}| \lesssim |I|+|J|$.

Despite the preceding complexity estimations,
it should be kept in mind that
algorithms with better complexity bounds may be devised for concrete metric
spaces and that, as a general rule,
the complexity bound $O(|J||J_+(i)|)$ may be improved by a clever evaluation of
\eqref{minimization_varepsilon_GM}.
To see why, notice first that $|I|$ and $|J|$ are large when $h$ is small,
and for small $h$ we have that:
\begin{enumerate}[%
label=\emph{\roman*}), topsep=0pt, itemsep=3pt, leftmargin=25pt]

\item the elements of $\{y_j: j \in J_+(i)\}$ tend to cluster around $T_*(x_i)$,
reducing the number of terms in the sums
\eqref{minimization_varepsilon_GM}
and making $|A_+| = |J_+(i)|$ smaller;

\item when $\{y_j: j \in J_+(i)\}$ is clustered, most of the points
in $\{y_j: j \in J\}$ can be discarded while searching for the approximate
minimizer of $V$ given by
\eqref{minimization_varepsilon_GM},
reducing the factor $|A| = |J|$.

\end{enumerate}

\section{Convergence of Discrete Plans and Optimal Values}
\label{section_convergence_plans_values}

Given an approximating sequence of discrete plans $(\pi_k)_k$,
the existence of a weakly convergent subsequence $(\pi_{k_l})_l$
satisfying $\Koperator[\pi_{k_l}]\longconverges[l]\Koperator[*]$
follows from general stability properties of optimal transport
\cite[Theorem 5.20]{villani_old_new}.
\Cref{introtheorem_conv_values_plans} asserts that, under appropriate hypotheses,
the convergence of both $(\pi_k)_k$ and its values
follows even without passing to a subsequence.
Our proof of
\Cref{introtheorem_conv_values_plans}
consists in estimating $\big| \Koperator[\pi_h] - \Koperator[*] \big|$
from above as a first step and then applying
\ref{proposition_stability_properties_OT_item_kantorovich}
in
\Cref{proposition_stability_properties_OT}.
That first step depends on the fact that every $h$-plan $\pi_h$ can be
slightly modified in order to obtain a \emph{continuous version}
$\continuous{\pi_h}$ of $\pi_h$ in $\mathcal{A}(\mu,\nu)$.

\bigsubsection{Continuous Version of an \texorpdfstring{$h$-plan}{h-plan}}

\begin{definition}
\label{definition_continuous_pi_h}
Given a feasible solution $\pi_h = (\pi_{ij})_{ij}$ for
\eqref{kantorovich_discrete_problem},
the \emph{continuous version of $\pi_h$} is the probability measure
$\continuous{\pi_h} \in \mathcal{P}(X\times Y)$ given by
\begin{equation}
\label{eq_definition_continuous_pi_h}
\continuous{\pi_h}
    = \sum_{(i,j)\,\in\,\supp{\pi_h}}{
            \frac{\pi_{ij}}{\mu_{i}\nu_{j}}} \; (\mu \otimes \nu) |_{E_i \times F_j
    }\,.
\end{equation}
\end{definition}

The probability measure $\continuous{\pi_h}$ is well-defined:
since $\mu_i,\nu_j>0$ for $(i,j)\in\supp{\pi_h}$, the sum
\eqref{eq_definition_continuous_pi_h}
is a well-defined measure in $\mathcal{M}^+(X\times Y)$ and
\[
\continuous{\pi_h} [X\times Y] =
\sum\nolimits_{ij}{\mathlarger{\tfrac{\pi_{ij}}{\mu_i\nu_j}} \; \mu|_{E_i}[X] \mspace{2mu} \nu|_{F_j}[Y]}
=\sum\nolimits_{ij}{\mathlarger{\tfrac{\pi_{ij}}{\mu_i\nu_j}} \; \mu[E_i] \, \nu[F_j]}
=\sum\nolimits_{ij}{\pi_{ij}} = 1 .
\]
As explained above, $\continuous{\pi_h}$ is a feasible plan for
\eqref{kantorovich_problem} constructed to be as similar as possible to $\pi_h$,
 in the sense described by the following proposition.

\begin{proposition}
\label{proposition_continuous_pi_h}
Let $\pi_h = (\pi_{ij})_{ij}$ be a feasible $h$-plan for a
general OT problem $(X,Y,\mu,\nu,c)$.
Then $\continuous{\pi_h}\in\mathcal{A}(\mu,\nu)$
is similar to $\pi_h$, in the sense that
\begin{equation}
\label{eq_continuous_pi_h_support_distance}
d_\mathcal{H}(\supp{\pi_h},\,\supp{\continuous{\pi_h}})\leq h
\;\;\text{ and }\;\;
\continuous{\pi_h}[E_i\times F_j] = \pi_{ij} = \pi_h[E_i\times F_j]
\;\text{ $\forall \,i,j$}.
\end{equation}
In addition,
if $c$ is bounded and uniformly continuous with modulus $\omega_c$, then
\begin{equation}
\label{eq_continuous_pi_h_cost}
\big\rvert\,
    \Koperator[\continuous{\pi_h}] - \Koperator[\pi_h]
\,\big\rvert \leq \omega_c(h).
\end{equation}
\end{proposition}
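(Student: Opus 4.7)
The plan is to verify the three claims of the proposition in the order they are stated, after first establishing that the claimed marginal identities hold. The core observation throughout is that the sets $E_i \times F_j$ (indexed by $(i,j)\in I\times J$) are pairwise disjoint and that on each such rectangle the continuous version $\continuous{\pi_h}$ is simply a constant multiple of $(\mu\otimes\nu)|_{E_i\times F_j}$. Writing $\lambda_{ij} = \pi_{ij}/(\mu_i\nu_j)$ for $(i,j)\in\supp{\pi_h}$ will let me move back and forth between $\pi_h$ and $\continuous{\pi_h}$ by the identity $\pi_{ij} = \lambda_{ij}\mu_i\nu_j$.

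For the marginal claim $\continuous{\pi_h}\in\mathcal{A}(\mu,\nu)$, I would compute $\continuous{\pi_h}[E\times Y]$ for a Borel set $E\subset X$ directly from \eqref{eq_definition_continuous_pi_h}. Decomposing the sum over $(i,j)\in\supp{\pi_h}$ as a double sum over $i\in I_+$ and $j\in J_+(i)$ and using $\sum_{j\in J_+(i)}\pi_{ij}=\mu_i$ for $i\in I_+$, the $\nu[F_j]$ factors collapse and I obtain $\sum_{i\in I_+}\mu[E\cap E_i]$, which equals $\mu[E]$ by \eqref{eq_measure_partitioning_by_I_plus_and_J_plus}. The identity for $Y$-marginals is symmetric. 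The second equality in \eqref{eq_continuous_pi_h_support_distance}, namely $\continuous{\pi_h}[E_i\times F_j]=\pi_{ij}$, is then immediate: for $(i,j)\in\supp{\pi_h}$ only the corresponding term in \eqref{eq_definition_continuous_pi_h} contributes, giving $\lambda_{ij}\mu_i\nu_j=\pi_{ij}$; for $(i,j)\notin\supp{\pi_h}$ both sides are zero (since either $\pi_{ij}=0$ by definition, or $\mu_i\nu_j=0$ which forces $\mu\otimes\nu$ to vanish on $E_i\times F_j$).

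For the Hausdorff-distance bound, interpreting $\supp{\pi_h}$ as a subset of $X\times Y$, I would verify both directed inclusions. If $(x_i,y_j)\in\supp{\pi_h}$ then $\pi_{ij}>0$, hence $\mu_i,\nu_j>0$, so $\supp{(\mu\otimes\nu)|_{E_i\times F_j}}$ is nonempty and entirely contained in $\overline{E_i}\times\overline{F_j}$; any of its points lies within $d_{X\times Y}$-distance $h$ of $(x_i,y_j)$ and belongs to $\supp{\continuous{\pi_h}}$ because the rectangles are pairwise disjoint (so the local scaling factor $\lambda_{ij}>0$ survives). Conversely, any point of $\supp{\continuous{\pi_h}}$ lies in some $\overline{E_i\times F_j}$ with $(i,j)\in\supp{\pi_h}$, and the diameter bound $\diam{E_i},\diam{F_j}\leq h$ places it within distance $h$ of $(x_i,y_j)\in\supp{\pi_h}$.

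For the cost estimate \eqref{eq_continuous_pi_h_cost}, I would rewrite both integrals in a common form:
\[
\Koperator[\pi_h]
= \sum_{(i,j)\in\supp{\pi_h}} \lambda_{ij}\int_{E_i\times F_j} c(x_i,y_j)\,d(\mu\otimes\nu),
\qquad
\Koperator[\continuous{\pi_h}]
= \sum_{(i,j)\in\supp{\pi_h}} \lambda_{ij}\int_{E_i\times F_j} c(x,y)\,d(\mu\otimes\nu),
\]
and subtract. Inside each rectangle, $(x,y)$ and $(x_i,y_j)$ are at $d_{X\times Y}$-distance at most $h$, so $|c(x,y)-c(x_i,y_j)|\leq\omega_c(h)$; the resulting bound telescopes to $\omega_c(h)\sum_{(i,j)}\pi_{ij}=\omega_c(h)$. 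I expect no genuine obstacle in the argument; the only place that requires care is the Hausdorff bound, where one must remember to exploit that $(i,j)\in\supp{\pi_h}$ forces both $\mu_i$ and $\nu_j$ to be positive so that $\supp{(\mu\otimes\nu)|_{E_i\times F_j}}$ is nonempty.
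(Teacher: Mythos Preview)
Your proposal is correct and follows essentially the same approach as the paper: compute the marginals directly from the defining sum using \eqref{eq_measure_partitioning_by_I_plus_and_J_plus}, verify the Hausdorff bound via the diameter estimates $\diam{E_i},\diam{F_j}\leq h$, and obtain the cost estimate by comparing $c(x,y)$ with $c(x_i,y_j)$ on each rectangle. The only cosmetic difference is your use of the notation $\lambda_{ij}$ and your justification that points of $\supp{(\mu\otimes\nu)|_{E_i\times F_j}}$ lie in $\supp{\continuous{\pi_h}}$ via the positivity of $\lambda_{ij}$, whereas the paper infers this from the mass identity $\continuous{\pi_h}[E_i\times F_j]=\pi_{ij}>0$; both are valid.
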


\begin{remark}
The notation $d_\mathcal{H}$
in \eqref{eq_continuous_pi_h_support_distance}
stands for the usual Hausdorff distance for subsets of
a metric space. Precisely, for subsets $A_1,A_2 \subset X \times Y$ we have
\[
d_\mathcal{H}(A_1,A_2) = \inf{ \big \{ \rho \geq 0 :
    A_2 \subset B_\rho[A_1]  \text{ and } A_1 \subset B_\rho[A_2]
\, \big \} },
\]
where $B_\rho[A]$ is the (closed) set of points at distance $\rho$ or less from $A\subset X\times Y$.
\end{remark}

\begin{proof}[Proof of \Cref{proposition_continuous_pi_h}]
For every Borel set $E \in \mathcal{B}(X)$ we get
\begin{alignat*}{2}
\continuous{\pi_h}[E\times Y] &=
\sum\nolimits_{ij}{\mathlarger{\tfrac{\pi_{ij}}{\mu_{i}\nu_{j}}} \, (\mu \otimes \nu) |_{E_i \times F_j}}[E\times Y]\\
&= \sum\nolimits_{ij}{\mathlarger{\tfrac{\pi_{ij}}{\mu_{i}\nu_{j}}}\, (\mu \otimes \nu) [(E\cap E_i) \times F_j]}\\
&= \sum\nolimits_{ij}{\mathlarger{\tfrac{\pi_{ij}}{\mu_{i}\nu_{j}}} \, \mu [E\cap E_i] \, \nu [F_j]}
\stackrel[\eqref{eq_generic_notation_partitions}]{}{=}
    \sum\nolimits_{ij}{\mathlarger{\tfrac{\pi_{ij}}{\mu_{i}}} \, \mu [ E\cap E_i ]} \\
&\stackrel[\eqref{eq_decomposition_of_support}]{}{=}
    \sum\nolimits_{i\in I_+}{\Big(\sum\nolimits_{j\in J_+(i)}{\pi_{ij}}\Big) \mathlarger{\tfrac{1}{\mu_{i}}} \, \mu[E\cap E_i]}\\
&= \sum\nolimits_{i\in I_+}{\mu_{i} \, \mathlarger{\tfrac{1}{\mu_{i}}} \, \mu[E\cap E_i]}
= \sum\nolimits_{i\in I_+}{\mu[E \cap E_i]}
\stackrel[]{\eqref{eq_measure_partitioning_by_I_plus_and_J_plus}}{=} \mu[E]\,,
\end{alignat*}
so the first marginal of $\continuous{\pi_h}$ is $\mu$.
A similar computation shows that $\nu$ is the second marginal of $\continuous{\pi_h}$,
proving that $\continuous{\pi_h}$ is a measure in $\mathcal{A}(\mu,\nu)$.

To prove \eqref{eq_continuous_pi_h_support_distance},
note first that
$\continuous{\pi_h}[E_i\times F_j] = \pi_{ij}$
follows immediately from
\eqref{eq_definition_continuous_pi_h}.
On the other hand, fixing $(i,j)\in \supp{\pi_h}$ we get
\begin{equation}\label{eq_continuous_pi_h_support_distance_proof_1}
\supp{\mu|_{E_i}\otimes\nu|_{F_j}} \subset \overline{E_i}\times\overline{F_j}
\subset B_h[(x_i,y_j)] \subset B_h[\supp{\pi_h}]  \,.
\end{equation}
Notice that $\supp{\continuous{\pi_h}} \cap (E_i\times F_j) \neq \emptyset$
since $\continuous{\pi_h}[E_i\times F_j]=\pi_{ij}>0$.
Then, taking $(\widehat{x}_i,\widehat{y}_j) \in \supp{\continuous{\pi_h}} \cap (E_i\times F_j)$,
we get
\begin{equation}
\label{eq_continuous_pi_h_support_distance_proof_2}
(x_i,y_j) \in E_i\times F_j \subset B_h[(\widehat{x}_i,\widehat{y}_j)]
\subset B_h[\supp{\continuous{\pi_h}}].
\end{equation}
Inclusions \eqref{eq_continuous_pi_h_support_distance_proof_1} and
\eqref{eq_continuous_pi_h_support_distance_proof_2}
for all $(i,j)$ in $\supp{\pi_h}$ imply inclusions
\[
\supp{\continuous{\pi_h}} \subset B_h[ \supp{\pi_h} ]
\quad\text{and}\quad
\supp{\pi_h} \subset B_h[ \supp{\continuous{\pi_h}} ]\,,
\]
which together imply
$d_\mathcal{H}(\supp{\pi_h},\,\supp{\continuous{\pi_h}})\leq h$.

Finally, to prove \eqref{eq_continuous_pi_h_cost} note first that for all $i,j$ we have
\[
\left\vert\int_{E_i \times F_j}{c\; d\continuous{\pi_h}} - \pi_{ij} c_{ij} \right\vert
=
\left\vert\int_{E_i \times F_j}{ [ c - c_{ij} ] \; d\continuous{\pi_h}}\right\vert
\leq
\pi_{ij}\mspace{2mu}\omega_c(h)
\]
since $\continuous{\pi_h}[E_i\times F_j]=\pi_{ij}$
and $\diam{E_i\times F_j}\leq h$.
Therefore, we get
\[
\big\rvert\, \Koperator[\continuous{\pi_h}] - \Koperator[\pi_h] \,\big\lvert
=
\left\rvert\, \sum_{ij}{\int_{E_i \times F_j}{c\; d\continuous{\pi_h}}} - \sum_{ij}{\pi_{ij} c_{ij}} \,\right\lvert
\leq
\sum_{ij}{\pi_{ij}\mspace{1mu}\omega_c(h)}
= \omega_c(h). \qedhere
\]
\end{proof}

\bigsubsection{Proof of Theorem \ref*{introtheorem_conv_values_plans}}

\begin{manualtheorem}{\ref*{introtheorem_conv_values_plans}}
\label{introtheorem_conv_values_plans_restatement}
Let $(X,Y,\mu,\nu,c)$ be a compact OT problem and let $\Koperator[*]$
be the optimal value of its Kantorovich problem.
Then every $\varepsilon$-optimal $h$-plan $\pi_h$ satisfies
\begin{equation}
\label{eq_introtheorem_conv_values_plans_restatement_bound_order}
\big\rvert\mspace{2mu} \Koperator[\pi_h] - \Koperator[*] \big\rvert
\leq \omega_c(h) + \varepsilon.
\end{equation}
Further, if $(\pi_k)_k$ is any approximating sequence of discrete plans then
\[
\Koperator[\pi_k] \longconverges[k] \Koperator[*]
\mspace{10mu}\text{ and }\mspace{15mu}
W_p(\pi_k, \minsetkantorovich) \longconverges[k]0
\mspace{10mu}\text{ for all $1\leq p < \infty$.}
\]
In particular, if the OT problem $(X,Y,\mu,\nu,c)$ also satisfies
\ref{kantorovich_uniqueness_hypothesis}
hypothesis with optimal transport plan $\pi_*$, then
$W_p(\pi_k,\pi_*)\longconverges[k]0$ for all $1\leq p < \infty$.
\end{manualtheorem}

\begin{proof}
Let $\pi_h$ be an $\varepsilon$-optimal $h$-plan for $(X,Y,\mu,\nu,c)$.
Then $\continuous{\pi_h}$ is a feasible solution for
\eqref{kantorovich_problem} by \Cref{proposition_continuous_pi_h}
and \eqref{eq_continuous_pi_h_cost} implies
\begin{equation*}
\Koperator[*]
\leq \Koperator[\continuous{\pi_h}]
\leq \Koperator[\pi_h] + \omega_c(h)
\leq \Koperator[\pi_h] + \omega_c(h) + \varepsilon.
\end{equation*}

Now take an optimal plan $\pi_*$ for
\eqref{kantorovich_problem}
and define
\begin{equation*}
\widetilde{\pi}_{ij} = \pi_* [E_i \times F_j]
\; \text{ for } \; (i,j) \in I \times J
\end{equation*}
to produce a discretization $\widetilde{\pi}_h = (\widetilde{\pi}_{ij})_{ij}$
of $\pi_*$.
It is easy to check that $\widetilde{\pi}_h$ is a feasible solution for
\eqref{kantorovich_discrete_problem}
and, since $\pi_h$ is $\varepsilon$-optimal, we get
\begin{align*}
\Koperator[\pi_h] - \varepsilon &\leq I[\widetilde{\pi}_h] = \sum_{ij}{\widetilde{\pi}_{ij}c_{ij}} = \sum_{ij}{c_{ij}\pi_*[E_i \times F_j]} \\
&= \sum_{ij}{\int_{E_i \times F_j}{c_{ij}\; d\pi_*}}
\stackrel[(*)]{}{\leq}
\sum_{ij}{\int_{E_i \times F_j}{[c + \omega_c(h)]\; d\pi_*}} \\
&= \sum_{ij}{\int_{E_i \times F_j}{c \; d\pi_*}} + \sum_{ij}{\int_{E_i \times F_j}{\omega_c(h)\;d\pi_*}} \\
&= \int_{X \times Y}{c \; d\pi_*} + \int_{X \times Y}{\omega_c(h) \; d\pi_*}
= \Koperator[\pi_*] + \omega_c(h) = \Koperator[*] + \omega_c(h) ,
\end{align*}
where inequality
$(*)$
follows since $\diam{E_i\times F_j}\leq h$ for all $(i,j)$.
This completes the proof of
\eqref{eq_introtheorem_conv_values_plans_restatement_bound_order}.
Now, taking $(\pi_k)_k$
as in
\Cref{def_approximating_sequence_discrete_plans}
and keeping in mind that $\omega_c(h)\converges 0$ for $\converges 0$,
it follows from
\eqref{eq_introtheorem_conv_values_plans_restatement_bound_order}
that
\[
\big| \Koperator[\pi_k] - \Koperator[*] \big|
\leq \omega_c(h_k) + \varepsilon_k \longconverges[k] 0.
\]
Then $\Koperator[\pi_k] \longconverges[k] \Koperator[*]$
and Theorem
\ref{introtheorem_conv_values_plans_restatement}
follows from
\ref{proposition_stability_properties_OT_item_kantorovich}
in \Cref{proposition_stability_properties_OT}.
\end{proof}

\begin{remark}
Note that $\pi_h$ is a purely atomic measure even if $\mu$ and $\nu$ are
non-atomic, which might be undesirable in some cases.
Fortunately, approximation properties similar to those in
\Cref{introtheorem_conv_values_plans_restatement}
hold for $\continuous{\pi_h}$, which is easily seen to be non-atomic
when $\mu$ and $\nu$ are.
To see why, notice first that
\eqref{eq_continuous_pi_h_cost}
and
\eqref{eq_introtheorem_conv_values_plans_restatement_bound_order}
imply
\[
\big\rvert\, \Koperator[\continuous{\pi_h}] - \Koperator[*] \,\big\rvert
\leq
\big\rvert\, \Koperator[\continuous{\pi_h}] - \Koperator[\pi_h] \,\big\rvert
+
\big\rvert\, \Koperator[\pi_h] - \Koperator[*] \,\big\rvert
\leq
\omega_c(h) +  \omega_c(h) + \varepsilon,
\]
so $\continuous{\pi_h}$ satisfies a cost inequality similar to
\eqref{eq_introtheorem_conv_values_plans_restatement_bound_order}.
Now note that the proof of
\Cref{introtheorem_conv_values_plans_restatement}
depends only on
\Cref{proposition_stability_properties_OT}
and bound
\eqref{eq_introtheorem_conv_values_plans_restatement_bound_order},
so that same proof works if we want a result like
\Cref{introtheorem_conv_values_plans_restatement}
for the continuous versions of the $h$-plans.
Thus, we immediately obtain the corollary below.
\end{remark}

\begin{corollary}
Let $(X,Y,\mu,\nu,c)$ be a compact OT problem and let $\Koperator[*]$
be the optimal value of its Kantorovich problem.
Then every $\varepsilon$-optimal $h$-plan $\pi_h$ satisfies
\begin{equation*}
\big\rvert\, \Koperator[\continuous{\pi_h}] - \Koperator[*] \,\big\rvert
\leq 2\mspace{2mu}\omega_c(h) + \varepsilon.
\end{equation*}
Further, if $(\pi_k)_k$ is any approximating sequence of discrete plans then
\[
\Koperator[\continuous{\pi_k}] \longconverges[k] \Koperator[*]
\mspace{10mu}\text{ and }\mspace{15mu}
W_p(\continuous{\pi_k}, \minsetkantorovich) \longconverges[k]0
\mspace{10mu}\text{ for all $1\leq p < \infty$.}
\]
In particular, if the OT problem $(X,Y,\mu,\nu,c)$ also satisfies
\ref{kantorovich_uniqueness_hypothesis}
hypothesis with optimal transport plan $\pi_*$, then
$W_p(\continuous{\pi_k},\pi_*)\longconverges[k]0$ for all $1\leq p < \infty$.
\end{corollary}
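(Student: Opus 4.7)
The plan is to deduce both claims directly from the corresponding facts already established for $\pi_h$, using that by construction $\continuous{\pi_h}$ is close to $\pi_h$ in cost and lies in $\mathcal{A}(\mu,\nu)$.

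For the quantitative bound I would start from the triangle inequality
\[
\big\rvert\, \Koperator[\continuous{\pi_h}] - \Koperator[*] \,\big\rvert
\leq
\big\rvert\, \Koperator[\continuous{\pi_h}] - \Koperator[\pi_h] \,\big\rvert
+
\big\rvert\, \Koperator[\pi_h] - \Koperator[*] \,\big\rvert,
\]
and then bound the first summand by $\omega_c(h)$ using \eqref{eq_continuous_pi_h_cost} of \Cref{proposition_continuous_pi_h}, and the second by $\omega_c(h)+\varepsilon$ using \eqref{eq_introtheorem_conv_values_plans_restatement_bound_order} of \Cref{introtheorem_conv_values_plans_restatement}. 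Adding the two estimates gives the announced bound $2\mspace{2mu}\omega_c(h)+\varepsilon$. This is exactly the computation displayed in the preceding remark, so no new estimate is needed.

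For the asymptotic part, given an approximating sequence $(\pi_k)_k$ with parameters $h_k,\varepsilon_k\to 0$, \Cref{proposition_continuous_pi_h} ensures $\continuous{\pi_k}\in\mathcal{A}(\mu,\nu)$, and the bound just proved yields
\[
\big\rvert\, \Koperator[\continuous{\pi_k}] - \Koperator[*] \,\big\rvert
\leq 2\mspace{2mu}\omega_c(h_k)+\varepsilon_k \longconverges[k] 0,
\]
since $\omega_c(h_k)\to 0$ by uniform continuity of $c$ on the compact $X\times Y$. Thus $(\continuous{\pi_k})_k$ is a sequence of feasible Kantorovich plans whose costs tend to $\Koperator[*]$, and invoking the Kantorovich stability property \ref{proposition_stability_properties_OT_item_kantorovich} in \Cref{proposition_stability_properties_OT} delivers $W_p(\continuous{\pi_k},\minsetkantorovich)\longconverges[k] 0$ for every $1\leq p<\infty$. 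Under \ref{kantorovich_uniqueness_hypothesis} the set $\minsetkantorovich$ collapses to $\{\pi_*\}$, which upgrades this to $W_p(\continuous{\pi_k},\pi_*)\longconverges[k] 0$.

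There is no genuine obstacle here: the corollary is essentially a bookkeeping consequence of \Cref{proposition_continuous_pi_h} and \Cref{introtheorem_conv_values_plans_restatement}, and the proof simply mirrors the argument already given for $\pi_k$, paying only an extra $\omega_c(h)$ factor to transfer everything from $\pi_h$ to $\continuous{\pi_h}$.
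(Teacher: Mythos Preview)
Your proposal is correct and matches the paper's own argument essentially verbatim: the remark preceding the corollary already gives the triangle-inequality computation for the bound $2\mspace{2mu}\omega_c(h)+\varepsilon$, and then observes that the proof of \Cref{introtheorem_conv_values_plans_restatement} depends only on \Cref{proposition_stability_properties_OT} and the cost bound, so it transfers directly to $\continuous{\pi_k}$. Your explicit remark that $\continuous{\pi_k}\in\mathcal{A}(\mu,\nu)$ (hence the marginal hypothesis in \Cref{proposition_stability_properties_OT} is trivially met with constant sequences $\mu_k=\mu$, $\nu_k=\nu$) is a nice clarification the paper leaves implicit.
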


\bigsubsection{Discussion}
\label{discussion_plans_values}

The general theory of OT is usually formulated on Polish spaces
and its first result is the well-definedness of \eqref{kantorovich_problem},
which requires $c$ to be lower semicontinuous and bounded below
by a function in $L^1(\mu\otimes\nu)$.
Taking this as a starting point it is easy to see why hypotheses in
\Cref{introtheorem_conv_values_plans} are optimal.
First, note that dropping the compactness hypothesis on $X,Y$ is pointless since
it is necessary for approximating sequences of discrete plans to exist,
as shown by
\Cref{lemma_existence_of_h_partitions}.
Second, neither the convergence of values $\Koperator[\pi_k]$ nor the
convergence of plans $\pi_k$ hold if the continuity hypothesis on
$c$ is relaxed to lower semicontinuity,
as shown by
\Cref{example_no_convergence_for_semicontinuous_cost}
below.

On the other hand, the conclusions in
\Cref{introtheorem_conv_values_plans}
are also optimal: if no additional regularity is assumed for
$(X,Y,\mu,\nu,c)$ the convergence of $\pi_k$ is no better than weak
and the error bound
\eqref{eq_introtheorem_conv_values_plans_restatement_bound_order}
is sharp.
The first of these claims might by challenged by asking if
$(\pi_k)_k$ converges to $\pi_*$ in some stronger sense, e.g.\
in total variation or in the Wasserstein $W_\infty$ distance.
The sharpness of
\eqref{eq_introtheorem_conv_values_plans_restatement_bound_order},
in turn,
might by challenged by asking if the convergence rate
for the values $\Koperator[\pi_h]$ is better than
$O(\omega_c(h) + \varepsilon)$.

Recall that for $\sigma_1,\sigma_2 \in \mathcal{P}(Z)$
their \emph{total variation distance} is
\begin{equation}
\label{eq_total_variation_distance}
\delta(\sigma_1, \sigma_2) =
\tfrac{1}{2} \| \sigma_1 - \sigma_2 \|_{TV}
=
\sup_{B \in \mathcal{B}(Z)}{
    \big| \sigma_1[B] - \sigma_2[B] \mspace{2mu} \big|,
}
\end{equation}
while their \emph{Wasserstein $W_\infty$ distance}
\cite{santambrogio_2015_OT_for_app}
is
\begin{equation}
\label{eq_wasserstein_distance_infinity}
W_\infty(\sigma_1,\sigma_2)
= \mspace{-8mu}
\inf_{\sigma \in \mathcal{A}(\sigma_1,\sigma_2)}{
    \mspace{-2mu}
    \left[ \max_{(z_1,z_2)\in\supp{\sigma}}{d_{Z\times Z}(z_1,z_2)} \right]
    \mspace{-4mu}
\geq
d_{\mathcal{H}}(\supp{\sigma_1},\,\supp{\sigma_2}).
}
\end{equation}
In \Cref{example_only_weak_convergence_for_pi_k} we show that
convergence in $W_\infty$ fails in
\Cref{introtheorem_conv_values_plans}.
To see why convergence in total variation distance fails too notice
that every measure in $\mathcal{A}(\mu,\nu)$ is non-atomic if either
$\mu$ or $\nu$ are non-atomic,
so in that case we get
\[
\delta(\pi_k,\pi_*)
\stackrel[\eqref{eq_total_variation_distance}]{}{\geq}
    \big| \pi_k[\supp{\pi_k}] - \pi_*[\supp{\pi_k}] \big|
= |1-0\mspace{1mu}| = 1
\]
since $\supp{\pi_k}$ is always a finite set.

For brevity reasons we omit the construction of concrete examples
illustrating the sharpness of
\eqref{eq_introtheorem_conv_values_plans_restatement_bound_order},
but the reader can easily construct their own
by drawing some inspiration from those below
and the ones presented in Sections
\ref{section_convergence_projection_maps_I}
and
\ref{section_convergence_projection_maps_II}.
In exchange, we prove in
\Cref{proposition_optimality_order_conv_opt_values}
that
\eqref{eq_introtheorem_conv_values_plans_restatement_bound_order}
is indeed sharp if $\pi_h$ is an optimal $h$-plan (i.e. $\varepsilon=0$).
More precisely, we show that
\eqref{eq_introtheorem_conv_values_plans_restatement_bound_order}
cannot be improved without imposing additional hypotheses
on $\mu,\nu$ or the $h$-partitions $\mathcal{C}_X,\mathcal{C}_Y$.

\begin{example}
\label{example_no_convergence_for_semicontinuous_cost}
Take $X=Y=[0,1]$ and $\mu=\nu=\mathcal{L}^1$ the $1$-dimensional Lebesgue measure
on $[0,1]$.
Consider the lower semicontinuous cost $c$
taking the value $0$ on the diagonal $\Delta = \{ (x,x) : x \in X\}$
and the value $1$ elsewhere.
It is easy to check that
$T_* = \operatorname{Id}_{[0,1]}$
is the unique optimal transport map and
that $\pi_* = \pi_{T_*}$ is the unique optimal plan, so
\ref{strong_uniqueness_hypothesis}
and
\ref{kantorovich_uniqueness_hypothesis}
hold with $\Koperator[*]=\Koperator[\pi_*]=\Moperator[T_*]=0$.

Fix $k\in\mathbb{N}$, take $h_k=k^{-1}$ and consider
$h_k$-partitions $\mathcal{C}_X$, $\mathcal{C}_Y$ given by
\[
E_i = \big[\tfrac{i-1}{k}, \tfrac{i}{k}\big]  \,,\;
x_i = \tfrac{2i-1}{2k} + \delta_i  \,,\;
F_j = \big[\tfrac{j-1}{k}, \tfrac{j}{k}\big]  \,,\;
y_j = \tfrac{2j-1}{2k}
\;\;\text{ for $1\leq i,j \leq k$.}
\]
Then $\mathcal{C}_Y$ consists in $k$ intervals of length $h_k$ pointed
on their centers and we choose $0<|\delta_i|<(2k)^{-1}$ so that the intervals
$E_i$ in $\mathcal{C}_X$ are pointed off their centers.
Note that no point $(x_i,y_j)$ lies on $\Delta$ since $\delta_i\neq 0$ for all $i$.
Then $c_{ij}=1$ for all $i,j$
and every feasible solution of
\eqref{kantorovich_discrete_problem}
is optimal with cost $1$. Among them, choose
\[
\pi_k = \tfrac{1}{k} \, \delta_{(x_1,y_k)}
+
\tfrac{1}{k} \, \delta_{(x_2,y_{k-1})}
+ \dots +
\tfrac{1}{k} \, \delta_{(x_k,y_1)},
\]
as depicted in
\Cref{figure_example_no_convergence_semicontinuous_cost}.
Repeating this process for every $k\in\mathbb{N}$,
we get $(\pi_k)_k$ with $\Koperator[\pi_k]=1$ for all $k$.
Then $\Koperator[\pi_k]\centernot{\longconverges} \Koperator[*]$
and $\pi_k \centernot{\longweaklyconverges} \pi_*$.
Besides, if $T_k \in \mathcal{T}[\pi_k]$,
every map $T_k$ is an approximation of $T(x)=1-x$
so $d_p(T_k,T_*) \centernot{\longconverges} 0$.
This shows that Theorems
\ref{introtheorem_conv_values_plans}
and
\ref{introtheorem_conv_maps}
fail if $c$ is allowed to be lower semicontinuous.
\end{example}

\begin{example}
\label{example_only_weak_convergence_for_pi_k}
Take $X,Y,\mu,\nu$ as in
\Cref{example_no_convergence_for_semicontinuous_cost},
but this time consider the cost function
$c(x,y)=(x-y)^2(x-\tfrac{1}{2})^2(y-\tfrac{1}{2})^2$.
The cost $c$ is a smooth perturbation of the quadratic cost $(x-y)^2$
by a multiplicative factor $(x-\tfrac{1}{2})^2(y-\tfrac{1}{2})^2$, which
adds two lines to the zero set $Z_c$ of $c$ as shown in
\Cref{figure_example_only_weak_convergence_for_pi_k}.
Again, it is easy to check that
$T_* = \operatorname{Id}_{[0,1]}$
is the unique optimal transport map and
that $\pi_* = \pi_{T_*}$ is the unique optimal plan, so
\ref{strong_uniqueness_hypothesis}
and
\ref{kantorovich_uniqueness_hypothesis}
hold with $\Koperator[*]=\Koperator[\pi_*]=\Moperator[T_*]=0$.
Note that the lines added to $Z_c$ are not enough to break down the uniqueness
of optimal solutions since both $\mu$ and $\nu$ have no atoms.

Fix $k\in\mathbb{N}$, take $h_k=(2k+1)^{-1}$ and consider identical
$h_k$-partitions $\mathcal{C}_X = \mathcal{C}_Y$ given by
$2k+1$ intervals pointed on their centers, i.e.
\[
E_i = F_i = \big[\tfrac{i-1}{2k+1}, \tfrac{i}{2k+1}\big]
\;\;\text{ and }\;\;
x_i = y_i = \tfrac{2i-1}{2(2k+1)}
\;\;\text{ for $1\leq i \leq 2k+1$.}
\]
Since $c\geq 0$, every zero cost solution of
\eqref{kantorovich_discrete_problem} is optimal.
Among them, choose $\pi_k$ to be almost concentrated on the diagonal as in
\Cref{figure_example_only_weak_convergence_for_pi_k},
that is
\begin{equation*}
\pi_k =
\tfrac{1}{2k+1}\mspace{2mu}\delta_{(x_1,y_{k+1})}
+
\tfrac{1}{2k+1}\mspace{2mu}\delta_{(x_{k+1},y_1)}
+
\sum_{i\notin \{1,k+1\}}{\tfrac{1}{2k+1}\mspace{2mu}\delta_{(x_i,y_i)}}.
\end{equation*}

Construct $(\pi_k)_k$ by repeating this process for every $k\in\mathbb{N}$.
Observe that the support of each $\pi_k$ contains two points off the
diagonal, precisely
\[
\left(\tfrac{1}{2(2k+1)}\,,\tfrac{1}{2}\right), \,
\left(\tfrac{1}{2}\,,\tfrac{1}{2(2k+1)}\right)
\in \supp{\pi_k}
\;\text{ for all $k$. }
\]
Since they converge to $(0,\nicefrac{1}{2})$ and $(\nicefrac{1}{2},0)$
respectively, for large $k$ we have
\[
\tfrac{\sqrt{2}}{4}
\approx
d_{\mathcal{H}}(\supp{\pi_k},\,\supp{\pi_*})
\stackrel[\eqref{eq_wasserstein_distance_infinity}]{}{\leq}
W_\infty(\pi_k,\pi_*),
\]
so $W_\infty(\pi_k,\pi_*) \centernot\longconverges 0$.
\end{example}

\begin{remark}
\label{remark_conv_maps_fails_for_p_equals_infty}
The previous example also shows that
\Cref{introtheorem_conv_maps}
fails for $p=\infty$, that is, for the pseudometric $d_\infty$ defined by
\begin{equation*}
    d_\infty(T_1,T_2) = \; \esssup_{x\in X}{\;\,d_Y(T_1(x),T_2(x))}
    \;\;\text{ for $T_1,T_2 \in \mathcal{B}(X,Y)$.}
\end{equation*}
To see why, construct $(T_k)_k$ from the sequence $(\pi_k)_k$ in
\Cref{example_only_weak_convergence_for_pi_k}
and notice that each projection map $T_k \in \mathcal{T}[\pi_k]$
necessarily satisfies $T_k(x) = \nicefrac{1}{2}$ for every $x\in E_1$,
which is clear from
\Cref{figure_example_only_weak_convergence_for_pi_k}.
Then, we get
\[
d_\infty(T_k,T_*) \geq \esssup_{E_1}{\big|T_k-T_*\big|}
= \esssup_{0 \leq x \leq h_k}{\big|\tfrac{1}{2}-x\big|}
= \tfrac{1}{2}. \qedhere
\]
\end{remark}

\begin{figure}
\centering
\subfigure[\Cref{example_no_convergence_for_semicontinuous_cost}]{
    \label{figure_example_no_convergence_semicontinuous_cost}
    \includegraphics[width=0.45\textwidth]{%
        ./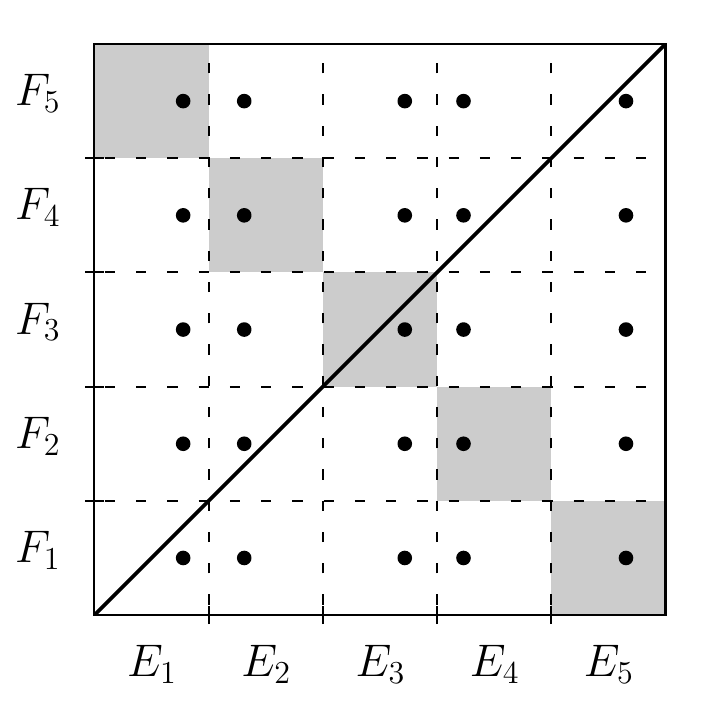%
        }
}\hspace{4mm}%
\subfigure[\Cref{example_only_weak_convergence_for_pi_k}]{
    \label{figure_example_only_weak_convergence_for_pi_k}
    \includegraphics[width=0.45\textwidth]{%
        ./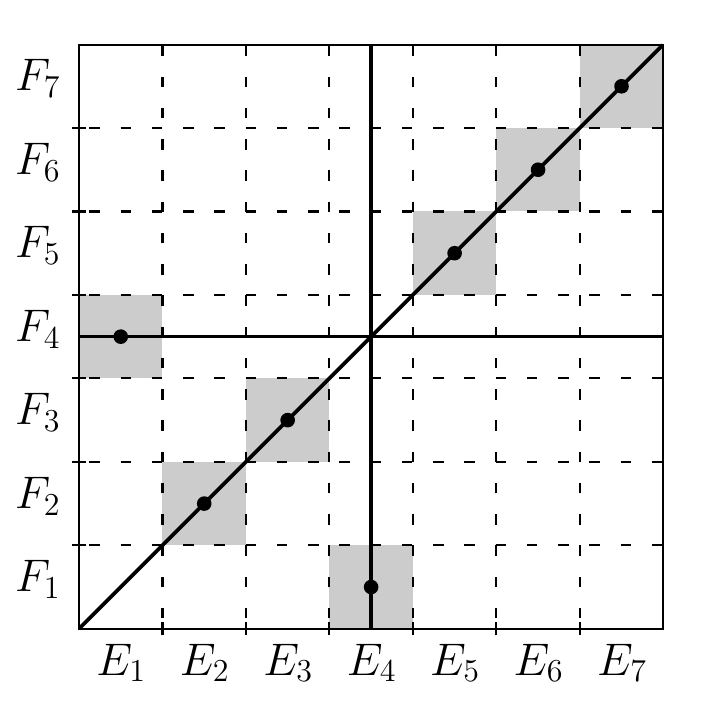%
    }
}
\captionsetup{width=0.9\linewidth}
\caption{The thick lines show the zero set of the cost $c$ and the dots inside
the gray squares show $\supp{\pi_k}$.
In \ref{figure_example_no_convergence_semicontinuous_cost}, note that all points
are off the diagonal.
In \ref{figure_example_only_weak_convergence_for_pi_k}, note that the
points off the diagonal ``do the work'' of the two missing points in
$E_1\times F_1$ and $E_4\times F_4$.
}
\end{figure}

\begin{proposition}
\label{proposition_optimality_order_conv_opt_values}
For compact $(X,d_X)$ and $(Y,d_Y)$ let  $c:X\times Y \to \mathbb{R}$ be a
continuous cost function. Assume there is a function $\phi:(0,\infty)\to(0,\infty)$
depending only on $X$, $Y$ and $c$ with the following property:
for any choice of measures
$\mu \in \mathcal{P}(X)$, $\nu \in \mathcal{P}(Y)$
and every optimal $h$-plan $\pi_h$ for $(X,Y,\mu,\nu,c)$, it holds that
\begin{equation*}
\big\rvert\, \Koperator[\pi_h] - \Koperator[*] \,\big\rvert \leq \phi(h).
\end{equation*}
Then $\phi \geq \omega_c$, which means that $\phi(h) \geq \omega_c(h)$ for all $h>0$.
\end{proposition}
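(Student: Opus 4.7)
The plan is to construct, for each $h>0$, an explicit compact OT problem $(X,Y,\mu,\nu,c)$ whose unique optimal $h$-plan has cost differing from $\Koperator[*]$ by exactly $\omega_c(h)$. Once such an instance is exhibited the bound $\phi(h)\geq\omega_c(h)$ is forced by the standing hypothesis.

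First I would exploit the compactness of $X\times Y$ and the continuity of $c$ to realize the modulus of continuity as an attained maximum: there exist $(x^1,y^1),(x^2,y^2)\in X\times Y$ with $d_{X\times Y}((x^1,y^1),(x^2,y^2))\leq h$ and $|c(x^1,y^1)-c(x^2,y^2)|=\omega_c(h)$; since the product metric is the max metric, this gives $d_X(x^1,x^2)\leq h$ and $d_Y(y^1,y^2)\leq h$ simultaneously. Take $\mu=\delta_{x^2}\in\mathcal{P}(X)$ and $\nu=\delta_{y^2}\in\mathcal{P}(Y)$, for which $\mathcal{A}(\mu,\nu)=\{\delta_{(x^2,y^2)}\}$ and hence $\Koperator[*]=c(x^2,y^2)$.

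Next I would build $h$-partitions $\mathcal{C}_X$ and $\mathcal{C}_Y$ in which a distinguished cell carries all the mass while being pointed by the ``wrong'' representative. Concretely, apply \Cref{lemma_existence_of_h_partitions} to obtain any $h$-partition of $X$ and then modify it by designating the pointed cell $E_0=\{x^1,x^2\}$ (or $\{x^1\}$ when $x^1=x^2$) with distinguished point $x^1\in E_0$, and by restricting the remaining cells to $X\setminus E_0$; by construction $\diam E_0=d_X(x^1,x^2)\leq h$, so the result is a valid $h$-partition. Note that the specific form of the remaining cells is irrelevant since $\mu$ assigns them zero mass. Do the analogous thing on $Y$ with $F_0=\{y^1,y^2\}$ pointed by $y^1$. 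Under this choice $\mu_0=1$, $\nu_0=1$ and $\mu_i=\nu_j=0$ for all other indices, so the marginal constraints of \eqref{kantorovich_discrete_problem} force $\pi_{00}=1$ and all other entries to vanish; the resulting $\pi_h$ is the \emph{only} feasible solution and is therefore automatically optimal.

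Finally, reading off the cost, $\Koperator[\pi_h]=c_{00}=c(x^1,y^1)$, and so
\[
\big|\Koperator[\pi_h]-\Koperator[*]\big|
=\big|c(x^1,y^1)-c(x^2,y^2)\big|=\omega_c(h).
\]
Invoking the hypothesis on $\phi$ for this particular instance and this optimal $h$-plan yields $\omega_c(h)\leq\phi(h)$, which is the desired conclusion. The only mildly delicate step is the construction of the partition in which a prescribed pair $\{x^1,x^2\}$ constitutes a single cell pointed by $x^1$, but this is purely a matter of surgery on any partition produced by \Cref{lemma_existence_of_h_partitions}; everything else is bookkeeping.
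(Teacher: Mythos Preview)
Your argument is correct and follows the same approach as the paper: choose Dirac measures $\mu=\delta_{x^2}$, $\nu=\delta_{y^2}$, build $h$-partitions whose only nontrivial cells are $\{x^1,x^2\}$ and $\{y^1,y^2\}$ pointed at the ``wrong'' representatives $x^1,y^1$, and read off the cost gap. The only cosmetic difference is that the paper introduces an auxiliary $\rho>0$ and realizes $(1-\rho)\,\omega_c(h)$ before letting $\rho\to 0$, whereas you appeal directly to the fact that $\omega_c(h)$ is an attained maximum (valid here since the constraint set $\{d_{X\times Y}\le h\}$ is compact and $c$ is continuous); your version is slightly cleaner but substantively identical.
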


\begin{proof}
Consider $\phi$ as above and fix $h>0$.
Taking $\rho>0$, by definition of $\omega_c$ it follows that
there are $(x_1,y_1)$, $(x_2,y_2)$ in $X\times Y$ such that
\begin{align*}
|c(x_1,y_1) - c(x_2,y_2)| &\geq (1-\rho)\,\omega_c(h) \, ,\\
h \geq d_{X\times Y}((x_1,y_1), (x_2,y_2))
&= \max{ \{ d_X (x_1, x_2) , d_Y (y_1, y_2) \} }.
\end{align*}
Define $\mu = \delta_{x_2}$, $\nu = \delta_{y_2}$ and consider $(X,Y,\mu,\nu,c)$.
Note that $\Koperator[*] = c(x_2,y_2)$ for said problem.
Now define $E_1 = \{x_1,x_2\}$, $F_1 = \{y_1,y_2\}$ and notice that
\[
E_1 \in \mathcal{B}(X)\,,\, x_1 \in E_1\,,\, \diam{E_1}\leq h
\;\text{ and }\;
F_1 \in \mathcal{B}(Y)\,,\, y_1 \in F_1\,,\, \diam{F_1}\leq h.
\]
Proceeding as in the proof of
\Cref{lemma_existence_of_h_partitions},
the ``partial'' $h$-partitions
$\{ (E_1, x_1) \}$ and $\{ (F_1, y_1) \}$
can be extended to $h$-partitions $\mathcal{C}_X$, $\mathcal{C}_Y$ for $X,Y$.
These partitions induce a unique optimal
$h$-plan $\pi_h = \delta_{(x_1,y_1)}$ with $\Koperator[\pi_h] = c(x_1,y_1)$.
Then
\[
\phi(h)\geq |\Koperator[\pi_h] - \Koperator[*]|
= |c(x_1,y_1) - c(x_2,y_2)| \geq (1-\rho)\,\omega_c(h)
\]
and $\phi(h)\geq \omega_c(h)$ follows by taking $\rho\to 0$.
\end{proof}

\section{Convergence of Projection Maps I}
\label{section_convergence_projection_maps_I}

Now we focus on the convergence of projections $T_h$ for
$h \to 0$ to give a proof of \Cref{introtheorem_conv_maps}
and discuss a few examples illustrating its sharpness.
From a conceptual point of view, the strategy for the proof is to control
$d_p(T_h,T_*)$ from above by the $\pi_h$-measure
of the ``bad'' sets
\begin{equation*}
B_{T_*}(\delta) = \big\{ \,
                (x,y)\in X\times Y \,:\, d_Y(y,T_*(x)) \geq \delta
            \,\big\},
\end{equation*}
and $T_h \converges T_*$ follows
since $\pi_h[B_{T_*}(\delta)] \approx \pi_*[B_{T_*}(\delta)] = 0$
for $h \approx 0^+$ and $\delta > 0$.
The intuition is clear: if the ``good'' sets
$(X\times Y) \setminus  B_{T_*}(\delta)$ are almost full measure then
$\pi_h$ is mostly concentrated around the graph of $T_*$,
pushing $T_h$ towards $T_*$.

Despite its conceptual simplicity, the strategy outlined above requires
some effort at the technical level to produce an actual proof.
As we did for \Cref{introtheorem_conv_values_plans},
we start by introducing the
\emph{semidiscrete version} $\semidiscrete{\pi_h}$ of $\pi_h$.
The idea here is to tweak $\pi_h$
by making its first marginal to be $\mu$ instead of $\mu_h$,
which enables the application of
\ref{proposition_stability_properties_OT_item_monge}
in
\Cref{proposition_stability_properties_OT}.
The second ingredient of the proof,
an upper bound for $d_p(T_h,T_*)$ in terms of
$\semidiscrete{\pi_h}[B_{T_*}(\delta)]$,
is derived in two steps:
first we obtain point estimates for $d_p(T_h(x),T_*(x))$
by applying Nearness Lemma \ref{nearness_lemma} and then
we glue them together by relying on the
disintegration of $\semidiscrete{\pi_h}$ with respect to $\mu$.

\bigsubsection{Semidiscrete Version of an \texorpdfstring{$h$-plan}{h-plan}}

\begin{definition}
\label{definition_semidiscrete_pi_h}
Given a feasible solution $\pi_h = (\pi_{ij})_{ij}$ for
\eqref{kantorovich_discrete_problem},
the \emph{semidiscrete version of $\pi_h$} is the probability measure
$\semidiscrete{\pi_h} \in \mathcal{P}(X\times Y)$ given by
\begin{equation}
\label{eq_definition_semidiscrete_pi_h}
\semidiscrete{\pi_h}
    = \sum_{(i,j)\,\in\,\supp{\pi_h}}{
        \mathlarger{\tfrac{\pi_{ij}}{\mu_i}} \; \mu|_{E_i}\otimes \delta_{y_j}
    }\,.
\end{equation}
\end{definition}

The probability measure $\semidiscrete{\pi_h}$ is well-defined:
since $\mu_i>0$ for $(i,j)\in\supp{\pi_h}$, the sum
\eqref{eq_definition_semidiscrete_pi_h}
is a well-defined measure in $\mathcal{M}^+(X\times Y)$ and
\[
\semidiscrete{\pi_h} [X\times Y] =
\sum\nolimits_{ij}{
    \mathlarger{\tfrac{\pi_{ij}}{\mu_i}} \;
        \mu|_{E_i}[X] \mspace{2mu} \delta_{y_j}[Y]}
=\sum\nolimits_{ij}{\mathlarger{\tfrac{\pi_{ij}}{\mu_i}} \; \mu[E_i]}
=\sum\nolimits_{ij}{\pi_{ij}} = 1 .
\]
As mentioned earlier, the idea behind \Cref{definition_semidiscrete_pi_h}
is to modify $\pi_h$ as little as possible to make it have first marginal
$\mu$.
To this end, for $i\in I_+$, the restriction $\mu|_{E_i}$ is rewritten as a
convex combination of copies of $\mu|_{E_i}$ ``lifted at heights $y_j$''
(i.e. $\mu|_{E_i}\otimes \delta_{y_j}$) with weights ${\pi_{ij}}/{\mu_i}$,
corresponding to the proportion of mass that $x_i$ sends to $y_j$ in the
discrete plan $\pi_h$ (see \Cref{figure_semidiscrete_pi_h_supports}).
The following proposition records all
the relevant properties of $\semidiscrete{\pi_h}$.

\begin{figure}%
\centering
\captionsetup{width=0.9\linewidth}
\includegraphics[width=1.0\textwidth]{%
    ./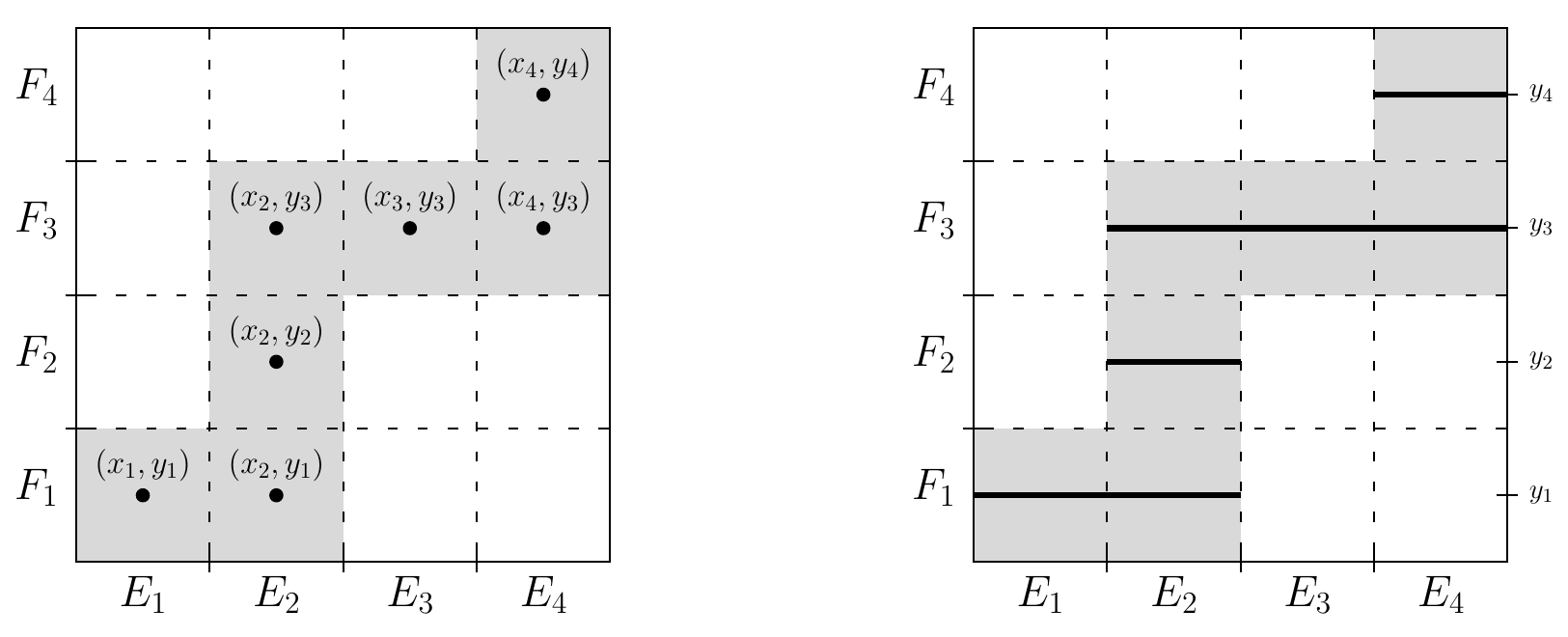%
}
\caption{
    The supports of $\pi_h$ (left), $\semidiscrete{\pi_h}$ (right) and
    $\continuous{\pi_h}$ (shaded region).
}
\label{figure_semidiscrete_pi_h_supports}
\end{figure}

\begin{figure}%
\centering
\captionsetup{width=0.8\linewidth}
\includegraphics[width=1.0\textwidth]{%
    ./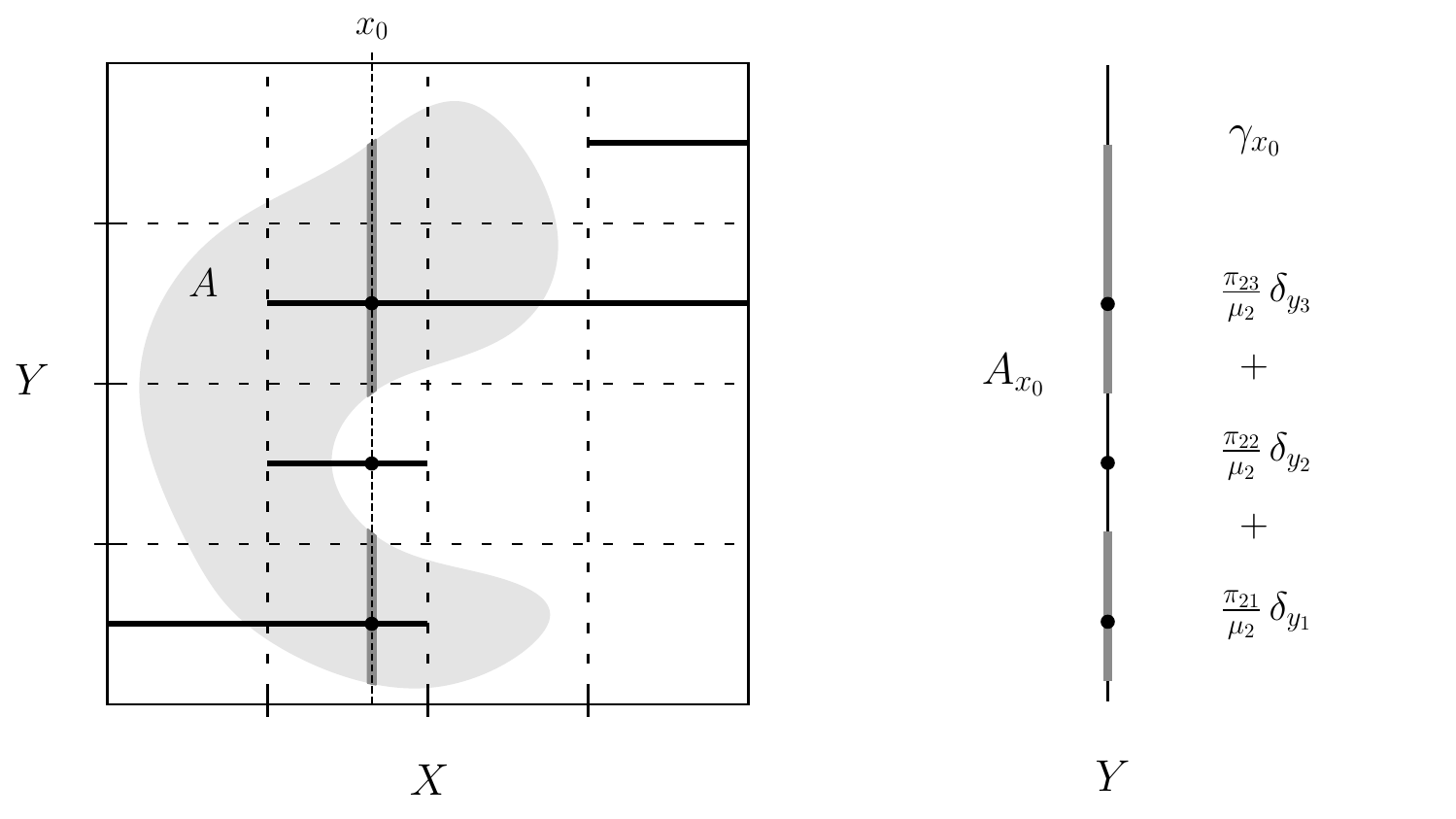%
}
\caption{
    The measure $\semidiscrete{\pi_h}$ in
    \Cref{figure_semidiscrete_pi_h_supports}
    disintegrates with respect to $\mu$ as a family of finitely supported
    measures $(\gamma_x)_{x\in X}$.
    For $A$ and $x_0$ as above we get
    $ (\delta_{x_0} \otimes \gamma_{x_0} )[A] =
    \gamma_{x_0}[A_{x_0}] = \tfrac{\pi_{21}}{\mu_2} + \tfrac{\pi_{23}}{\mu_2}$.
}
\label{figure_semidiscrete_pi_h_disintegration}
\end{figure}

\begin{proposition}
\label{proposition_semidiscrete_pi_h}
Let $\pi_h = (\pi_{ij})_{ij}$ be a feasible $h$-plan for a general
OT problem $(X,Y,\mu,\nu,c)$.
Then $\semidiscrete{\pi_h}\in\mathcal{A}(\mu,\nu_h)$
is similar to $\pi_h$, in the sense that
\begin{equation}
\label{eq_semidiscrete_pi_h_support_distance}
d_\mathcal{H}(\supp{\pi_h},\,\supp{\semidiscrete{\pi_h}})\leq h
\;\;\text{ and }\;\;
\semidiscrete{\pi_h}[E_i\times F_j] = \pi_{ij} = \pi_h[E_i\times F_j]
\;\text{ $\forall \,i,j$}.
\end{equation}
Additionally, $\semidiscrete{\pi_h}$ admits the disintegration
\begin{equation}
\label{eq_formula_disintegration}
\semidiscrete{\pi_h} =
\sum_{i}{
    \int_{E_i}{ \delta_x \otimes \Big(
        \sum\nolimits_{j}{\tfrac{\pi_{ij}}{\mu_i}\,\delta_{y_j}}
    \Big) \,d\mu(x) }
}\mspace{1mu}.
\end{equation}
That is, for each Borel set $A \in \mathcal{B}(X\times Y)$ the identity
\begin{equation}
\label{eq_formula_evaluation_disintegration}
\semidiscrete{\pi_h} [A] =
\sum_{i}{
    \int_{E_i}{ \Big(
        \sum\nolimits_{j}{\tfrac{\pi_{ij}}{\mu_i}\,\delta_{y_j}[A_x]}
    \Big) \,d\mu(x) }
}
\end{equation}
holds, where $A_x = \{y\in Y : (x,y) \in A\}$.
\end{proposition}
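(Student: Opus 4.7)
The plan is to mirror the proof of \Cref{proposition_continuous_pi_h}: each assertion reduces to a direct expansion of \eqref{eq_definition_semidiscrete_pi_h}, exploiting that $\mu_i > 0$ whenever $(i,j) \in \supp{\pi_h}$ (so that $\pi_{ij}/\mu_i$ is well-defined), that the cells of $\mathcal{C}_X$ and $\mathcal{C}_Y$ are disjoint, and that \eqref{eq_measure_partitioning_by_I_plus_and_J_plus} lets us recover $\mu$ and $\nu$ from their restrictions to the $E_i$ and $F_j$.

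For the first marginal, I would expand
\[
\semidiscrete{\pi_h}[E \times Y]
= \sum_{ij} \tfrac{\pi_{ij}}{\mu_i}\, \mu[E \cap E_i]
= \sum_{i \in I_+} \tfrac{\mu[E \cap E_i]}{\mu_i} \sum_{j} \pi_{ij}
= \sum_{i \in I_+} \mu[E \cap E_i] = \mu[E],
\]
using the row constraint $\sum_j \pi_{ij} = \mu_i$ of \eqref{kantorovich_discrete_problem} in the third step and \eqref{eq_measure_partitioning_by_I_plus_and_J_plus} in the last. For the second marginal the analogous expansion collapses via the column constraint $\sum_i \pi_{ij} = \nu_j$ to $\sum_j \nu_j \delta_{y_j}[F] = \nu_h[F]$. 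The cell identity $\semidiscrete{\pi_h}[E_i \times F_j] = \pi_{ij}$ then follows by evaluating \eqref{eq_definition_semidiscrete_pi_h} on $E_i \times F_j$: disjointness of the two partitions forces only the $(i,j)$-term to contribute, and it equals $(\pi_{ij}/\mu_i) \cdot \mu_i \cdot 1 = \pi_{ij}$.

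For the Hausdorff bound, because \eqref{eq_definition_semidiscrete_pi_h} is a finite positive combination of product measures I would use
\[
\supp{\semidiscrete{\pi_h}} = \bigcup_{(i,j) \in \supp{\pi_h}} \supp{\mu|_{E_i}} \times \{y_j\} \; \subset \bigcup_{(i,j) \in \supp{\pi_h}} \overline{E_i} \times \{y_j\}.
\]
Every $(\hat{x},y_j) \in \supp{\semidiscrete{\pi_h}}$ then lies within $d_{X\times Y}$-distance $\diam{E_i} \leq h$ of $(x_i,y_j) \in \supp{\pi_h}$. Conversely, for each $(i,j) \in \supp{\pi_h}$ the factor $\mu|_{E_i}$ is a nonzero finite Borel measure on a compact metric space, so $\supp{\mu|_{E_i}}\times\{y_j\}$ is a nonempty subset of $\supp{\semidiscrete{\pi_h}}$ lying within $h$ of $(x_i,y_j)$. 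The two inclusions together yield the claimed bound.

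Finally, for the disintegration formula \eqref{eq_formula_disintegration} it suffices to verify \eqref{eq_formula_evaluation_disintegration} for arbitrary $A \in \mathcal{B}(X\times Y)$. I would do this summand-by-summand, using
\[
\Bigl( \tfrac{\pi_{ij}}{\mu_i}\, \mu|_{E_i} \otimes \delta_{y_j} \Bigr)[A] = \int_{E_i} \tfrac{\pi_{ij}}{\mu_i}\, \delta_{y_j}[A_x] \, d\mu(x)
\]
from the definition of the product of $\mu|_{E_i}$ with a Dirac mass, and then moving the $j$-sum inside the integral. The only point requiring real care is the bookkeeping with $I_+$ and $J_+(i)$, so that every $\pi_{ij}/\mu_i$ appearing in the argument is meaningful and no spurious $0/0$ is introduced when restricting the sums; beyond that, no substantive obstacle is anticipated, since the whole statement becomes algebraic once the index sets are handled correctly.
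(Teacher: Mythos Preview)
Your argument for the marginals, the cell identity, and the Hausdorff bound mirrors the paper's treatment (which simply defers to the proof of \Cref{proposition_continuous_pi_h}), so those parts are fine. One small slip: you justify nonemptiness of $\supp{\mu|_{E_i}}$ by invoking compactness, but the proposition is stated for a \emph{general} OT problem, meaning Polish $X,Y$; the conclusion still holds (nonzero finite Borel measures on second-countable spaces have nonempty support), so this is cosmetic.

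Where you genuinely diverge from the paper is in the disintegration formula \eqref{eq_formula_evaluation_disintegration}. You obtain it in one line by applying the Fubini--Tonelli identity
\[
(\mu|_{E_i}\otimes\delta_{y_j})[A]=\int_{E_i}\delta_{y_j}[A_x]\,d\mu(x)
\]
term by term and then summing over $(i,j)\in\supp{\pi_h}$; this is correct, since for Polish $X,Y$ one has $\mathcal{B}(X\times Y)=\mathcal{B}(X)\otimes\mathcal{B}(Y)$ and the standard product-measure machinery applies directly. The paper instead works from scratch: it defines the right-hand side of \eqref{eq_formula_evaluation_disintegration} as a set function $\eta$, verifies by hand that $x\mapsto\delta_{y_j}[A_x]$ is Borel and that $\eta$ is $\sigma$-additive, and then identifies $\eta$ with $\semidiscrete{\pi_h}$ via Dynkin's $\pi$--$\lambda$ theorem by checking agreement on measurable rectangles $E\times F$. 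Your route is shorter and relies on a black-box invocation of Fubini; the paper's route is longer but self-contained and makes the measurability of the integrand explicit. Both are valid.
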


\begin{remark}
For $i\in I \setminus I_+$ we have $\mu_i=0$ so division by
$\mu_i$ is ill-defined, but the corresponding integrals in
\eqref{eq_formula_disintegration}
and
\eqref{eq_formula_evaluation_disintegration}
are well-defined and evaluate to zero since $\mu[E_i]=\mu_i=0$ in that case.
As a consequence, it makes no difference to consider the sums
over $I_+$ or over $I$.
On another note, disintegration
\eqref{eq_formula_disintegration}
is just the standard disintegration $(\gamma_x)_{x\in X}$ for
$\semidiscrete{\pi_h}$ with respect to $\mu$, where $\gamma_x$ is $\mu$-a.e.
defined by prescribing
\(
\gamma_x = \sum_{j}{\tfrac{\pi_{ij}}{\mu_i}\delta_{y_j}}
\)
for $x\in E_i$ and $i\in I_+$.
Then $\semidiscrete{\pi_h} = \int_{X}{\delta_x \otimes \gamma_x \, d\mu(x)}$
and, for $A \in \mathcal{B}(X\times Y)$, the rightmost integral in
\[
\semidiscrete{\pi_h}[A]
= \int_{X}{ (\delta_x \otimes \gamma_x) [A]\, d\mu(x)}
= \int_{X}{\gamma_x [A_x] \, d\mu(x)}
\]
expands to
\eqref{eq_formula_evaluation_disintegration}
by plugging in the piecewise definition for $\gamma_x$.
See \Cref{figure_semidiscrete_pi_h_disintegration} for an illustration of how
disintegration $(\gamma_x)_{x\in X}$ looks for $\semidiscrete{\pi_h}$.
\end{remark}

\begin{proof}[Proof of \Cref{proposition_semidiscrete_pi_h}]
The fact that $\semidiscrete{\pi_h}\in\mathcal{A}(\mu,\nu_h)$
and \eqref{eq_semidiscrete_pi_h_support_distance}
follow as in the proof of
\Cref{proposition_continuous_pi_h}.
For \eqref{eq_formula_evaluation_disintegration},
first we show that
$\eta:\mathcal{B}(X\times Y) \to \mathbb{R}$ given by
\begin{equation}
\label{eq_definition_of_eta}
\eta [A]
= \sum_{i}{\int_{E_i}{
    \Big(\sum\nolimits_{j}{
        \tfrac{\pi_{ij}}{\mu_i}\,\delta_{y_j}[A_x]
    }\Big) \,d\mu(x)}}
\end{equation}
is a well-defined Borel measure and then we get
$\eta=\semidiscrete{\pi_h}$ by a standard application of
Dynkin's $\pi-\lambda$ Theorem
\cite[Theorem 1.4]{evans_gariepy}.

Fix $A \in \mathcal{B}(X\times Y)$ and define $f_{ij}:E_{i} \to \mathbb{R}$ by
$f_{ij}(x) = \delta_{y_j}[A_x]$.
Since $f_{ij}$ takes values in $\{0,1\}$, then $f_{ij}$ is Borel iff
$f^{-1}_{ij}(1)$ is a Borel subset of $E_i$.
Now define the Borel map $\phi_{ij}:E_{i} \to X\times Y$
by $\phi_{ij}(x) = (x,y_j)$ and observe that
\begin{align*}
f^{-1}_{ij}(1)
&= \{x\in E_i: \delta_{y_j}[A_x]=1\}
 = \{x\in E_i: (x,y_j) \in A\} \\
&= \{x\in E_i: (x,y_j) \in A \cap (E_i \times \{y_j\}) \} \\
&= \phi^{-1}_{ij} (A \cap (E_i\times \{y_j\})) \mspace{1mu}.
\end{align*}
Then $f^{-1}_{ij}(1)$ is Borel and so is $f_{ij}$.
It follows that the integrals
\[
\int_{E_i}{
    \sum\nolimits_{j}{\tfrac{\pi_{ij}}{\mu_i}\,\delta_{y_j}[A_x]} \, d\mu(x)
}
= \int_{E_i}{\sum\nolimits_{j}{\tfrac{\pi_{ij}}{\mu_i}f_{ij}(x)} \,d\mu(x)}
\]
are well-defined, finite and nonnegative and
so is the map $\eta$ defined by \eqref{eq_definition_of_eta}.

To show that $\eta$ is $\sigma$-additive consider disjoint sets
$(A_k)_{k\in\mathbb{N}} \subset \mathcal{B}(X\times Y)$ and notice that,
for any $(x,y) \in X\times Y$, we have
$\, \delta_y[(A_k)_x] = 1 \mspace{-2mu} \iff \mspace{-2mu} (x,y) \in A_k \,$.
Then, the disjointness of $(A_k)_{k}$ implies
$\sum_{k}{\delta_y[(A_k)_x]} \in \{0,1\}$
and in both cases the equality
\begin{equation}
\label{eq_sigma_additivity_key_identity}
\delta_y \Big[
    \mspace{1mu} \Big({\bigsqcup\nolimits_{k}{A_k}}\Big)_x  \mspace{1mu}
\Big]
= \sum\nolimits_{k}{ \delta_y [ (A_k)_x ] }
\end{equation}
becomes apparent.
The $\sigma$-additivity of $\eta$ follows from
\eqref{eq_sigma_additivity_key_identity}
and \eqref{eq_definition_of_eta}.
Note also that \eqref{eq_definition_of_eta}
implies $\eta(X\times Y)=1$,
so $\eta \in \mathcal{P}(X\times Y)$.

To prove $\semidiscrete{\pi_h} = \eta$ note first that
$\mathcal{B}(X\times Y)$ is the $\sigma$-algebra generated by
\[
\Pi = \{ E\times F : E\in \mathcal{B}(X) \, , \; F\in \mathcal{B}(Y) \},
\]
which is a $\pi$-system.
If we are able to show $\Pi \subset \Lambda$ for the $\lambda$-system
\[
\Lambda = \{
    A \in \mathcal{B}(X\times Y) : \semidiscrete{\pi_h} [A] = \eta [A]
\},
\]
then Dynkin's $\pi-\lambda$ Theorem implies
$\Lambda \supset \sigma(\Pi) = \mathcal{B}(X\times Y)$.
Therefore, it suffices to show $\Pi \subset \Lambda$
to obtain $\semidiscrete{\pi_h} = \eta$.
To this end, fix $E\times F \in \Pi$ and observe that
\begin{equation}\label{eq_identity_for_proof_eta_pi_tilde}
\mathbbm{1}_E(x) \cdot \Big(
    \sum\nolimits_{j}{\tfrac{\pi_{ij}}{\mu_i} \mspace{1mu} \delta_{y_j}[F]}
\Big)
=
\sum\nolimits_{j}{
    \tfrac{\pi_{ij}}{\mu_i} \mspace{1mu} \delta_{y_j} [(E\times F)_x]
}
\end{equation}
for $i\in I$ and $x\in X$, which implies
\begin{align*}
\eta[E\times F]
&\stackrel[\eqref{eq_definition_of_eta}]{}{=}
    \sum_{i}{\int_{E_i}{\Big(
        \sum\nolimits_{j}{\tfrac{\pi_{ij}}{\mu_i}\delta_{y_j} [(E\times F)_x] }
    \Big) \,d\mu(x)}} \\
&\stackrel[\eqref{eq_identity_for_proof_eta_pi_tilde}]{}{=}
    \sum_{i}{\int_{E_i}{\Big(
        \sum\nolimits_{j}{\tfrac{\pi_{ij}}{\mu_i} \delta_{y_j}[F]}
    \Big) \mathbbm{1}_E(x) \,d\mu(x)}} \\
&= \sum_{i}{\int_{E\cap E_i}{\Big(
        \sum\nolimits_{j}{\tfrac{\pi_{ij}}{\mu_i} \delta_{y_j}[F]}
    \Big) \,d\mu(x)}} \\
&= \sum_{i}{\Big(
        \sum\nolimits_{j}{\tfrac{\pi_{ij}}{\mu_i} \delta_{y_j}[F]
    \Big)} \mu[E\cap E_i] } \\
&= \sum_{ij}{ \tfrac{\pi_{ij}}{\mu_i} \delta_{y_j}[F] \mu[E\cap E_i] }
    = \sum_{ij}{ \tfrac{\pi_{ij}}{\mu_i} \mu|_{E_i}[E]  \delta_{y_j}[F] }
\stackrel[\eqref{eq_definition_semidiscrete_pi_h}]{}{=}
    \semidiscrete{\pi_h} [E\times F] \,. \qedhere
\end{align*}
\end{proof}

\begin{proposition}
\label{proposition_semidiscrete_pi_concentrate_around_T_opt}
Let $(X,Y,\mu,\nu,c)$ be a compact OT problem satisfying
\ref{strong_uniqueness_hypothesis}
with optimal transport map $T_*$.
If $(\pi_k)_k$ is an approximating sequence of discrete plans
for $(X,Y,\mu,\nu,c)$, then for every $\delta>0$ we have
\begin{equation}
\label{eq_pi_tilde_far_from_graph_goes_to_zero}
\semidiscrete{\pi_k} \big[ \{
    (x,y) \in X \times Y: \;d_Y(y, T_*(x)) \geq \delta
\} \big] \longconverges[k] 0\,.
\end{equation}
\end{proposition}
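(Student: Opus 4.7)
The plan is to reduce the assertion to the weak convergence $\semidiscrete{\pi_k} \longweaklyconverges[k] \pi_*$, where $\pi_*=(\operatorname{Id}_X \times T_*)_\# \mu$ is the unique optimal plan provided by \ref{strong_uniqueness_hypothesis}, and then to convert this weak convergence into a mass estimate on $B_{T_*}(\delta)$ via a Lusin-type regularization of $T_*$ combined with the Portmanteau theorem. The main difficulty lies in this final step, since $B_{T_*}(\delta)$ is a priori only Borel — $T_*$ need not be continuous — so Portmanteau cannot be applied to it directly.

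First we establish $\semidiscrete{\pi_k} \longweaklyconverges[k] \pi_*$. Since \ref{strong_uniqueness_hypothesis} implies \ref{kantorovich_uniqueness_hypothesis}, \Cref{introtheorem_conv_values_plans} gives $W_p(\pi_k,\pi_*) \longconverges[k] 0$. To transfer this to the semidiscrete version we couple $\pi_k$ and $\semidiscrete{\pi_k}$ explicitly: for each $(i,j)\in\supp{\pi_k}$ we pair the atom of mass $\pi_{ij}$ of $\pi_k$ located at $(x_i,y_j)$ with the portion $\tfrac{\pi_{ij}}{\mu_i}\,\mu|_{E_i}\otimes \delta_{y_j}$ of $\semidiscrete{\pi_k}$ supported on $E_i\times\{y_j\}$. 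Since $d_{X\times Y}((x_i,y_j),(x,y_j))=d_X(x_i,x)\leq \diam{E_i}\leq h_k$ for $x\in E_i$, this coupling yields $W_\infty(\pi_k,\semidiscrete{\pi_k})\leq h_k$, and the triangle inequality combined with $W_p\leq W_\infty$ gives $W_p(\semidiscrete{\pi_k},\pi_*)\longconverges[k]0$ for every $1\leq p<\infty$, hence in particular weak convergence.

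Now fix $\varepsilon>0$. Since $\mu$ is a Radon probability on the compact space $X$ and $T_*\in\mathcal{B}(X,Y)$, Lusin's theorem furnishes a compact set $K_\varepsilon\subset X$ with $\mu[X\setminus K_\varepsilon]<\varepsilon$ such that $T_*|_{K_\varepsilon}$ is continuous. Consequently $(x,y)\mapsto d_Y(y,T_*(x))$ is continuous on the closed set $K_\varepsilon\times Y\subset X\times Y$, so
\[
A_\varepsilon \defeq B_{T_*}(\delta)\cap (K_\varepsilon\times Y)
=\{(x,y)\in K_\varepsilon\times Y\,:\, d_Y(y,T_*(x))\geq \delta\}
\]
is closed in $X\times Y$. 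Using that $\semidiscrete{\pi_k}\in\mathcal{A}(\mu,\nu_k)$ has first marginal $\mu$, we obtain
\[
\semidiscrete{\pi_k}[B_{T_*}(\delta)]
\leq \semidiscrete{\pi_k}[A_\varepsilon]+\semidiscrete{\pi_k}[(X\setminus K_\varepsilon)\times Y]
= \semidiscrete{\pi_k}[A_\varepsilon]+\mu[X\setminus K_\varepsilon]
\leq \semidiscrete{\pi_k}[A_\varepsilon]+\varepsilon.
\]

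Finally, applying the Portmanteau theorem to the closed set $A_\varepsilon$ and the weakly convergent sequence $\semidiscrete{\pi_k}\longweaklyconverges[k]\pi_*$,
\[
\limsup_{k}\semidiscrete{\pi_k}[A_\varepsilon]\leq \pi_*[A_\varepsilon]
= \mu\big[\{x\in K_\varepsilon \,:\, d_Y(T_*(x),T_*(x))\geq\delta\}\big]=0,
\]
where the middle equality uses $\pi_*=(\operatorname{Id}_X\times T_*)_\#\mu$ and the last one uses $\delta>0$. Combining both estimates gives $\limsup_k \semidiscrete{\pi_k}[B_{T_*}(\delta)]\leq \varepsilon$, and letting $\varepsilon\to 0$ proves \eqref{eq_pi_tilde_far_from_graph_goes_to_zero}.
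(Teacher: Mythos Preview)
Your proof is correct and follows essentially the paper's route: the Lusin-plus-Portmanteau argument you give is exactly the proof of item~\ref{proposition_stability_properties_OT_item_monge} in \Cref{proposition_stability_properties_OT}, which the paper invokes as a black box after checking its hypotheses for $(\semidiscrete{\pi_k})_k$. The only variation is how weak convergence $\semidiscrete{\pi_k}\longweaklyconverges[k]\pi_*$ is reached---the paper gets it via the cost estimate $\big|\Koperator[\semidiscrete{\pi_k}]-\Koperator[\pi_k]\big|\leq\omega_c(h_k)$ combined with \Cref{introtheorem_conv_values_plans} and the stability proposition, whereas you use the more direct coupling bound $W_\infty(\pi_k,\semidiscrete{\pi_k})\leq h_k$ together with $W_p(\pi_k,\pi_*)\to 0$.
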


\begin{proof}
We have $\semidiscrete{\pi_k} \in \mathcal{A}(\mu,\nu_k)$ by
\Cref{proposition_semidiscrete_pi_h} and $\nu_k \longweaklyconverges[k] \nu$
by
\Cref{remark_weak_convergence_marginals}.
Then, it suffices to show
$\Koperator[\semidiscrete{\pi_k}] \longconverges[k] \Koperator[*]$
in order to deduce
\eqref{eq_pi_tilde_far_from_graph_goes_to_zero}
by applying item
\ref{proposition_stability_properties_OT_item_monge}
in
\Cref{proposition_stability_properties_OT}
to $(\semidiscrete{\pi_k})^{\vphantom{s}}_k$.
Further, since
\[
\big| \Koperator[\semidiscrete{\pi_k}] - \Koperator[*] \big|
\leq
\big| \Koperator[\semidiscrete{\pi_k}] - \Koperator[\pi_k] \big|
+
\big| \Koperator[\pi_k]  - \Koperator[*] \big|
\]
and
$\Koperator[\pi_k] \longconverges[k] \Koperator[*]$
by \Cref{introtheorem_conv_values_plans},
the previous reasoning tells us that
\eqref{eq_pi_tilde_far_from_graph_goes_to_zero}
follows if we prove
$\big|
    \Koperator[\semidiscrete{\pi_k}] - \Koperator[\pi_k]
\big| \longconverges[k] 0$.
To prove this last claim
fix an $h_k$-plan $\pi_k$ and,
working with generic notation
\eqref{eq_generic_notation_partitions},
observe that
\begin{align*}
\big| \Koperator[\semidiscrete{\pi_k}] - \Koperator[\pi_k] \big|
&= \left| \sum_{ij}{
    \int_{E_i\times F_j}{c(x,y) \,d\semidiscrete{\pi_k}(x,y)}  - c_{ij}\pi_{ij}
} \right|\\
&\stackrel[\eqref{eq_semidiscrete_pi_h_support_distance}]{}{\leq}
    \sum_{ij}{
        \int_{E_i\times F_j}{| c(x,y) - c_{ij} | \,d\semidiscrete{\pi_k}(x,y)}
}
\stackrel[\eqref{eq_semidiscrete_pi_h_support_distance}]{}{\leq}
    \sum_{ij}{\omega_c(h_k) \pi_{ij}} = \omega_c(h_k) .
\end{align*}
The proof is complete since we assume $h_k \longconverges[k] 0$ by
\Cref{def_approximating_sequence_discrete_plans}.
\end{proof}

\bigsubsection{Proof of Theorem \ref*{introtheorem_conv_maps}}

\begin{proposition}
\label{proposition_inequalities_dp_for_different_projections}
Let $(X,Y,\mu,\nu,c)$ be a compact OT problem and fix $T\in\mathcal{B}(X,Y)$
to define, for each $\delta>0$, the set
\begin{equation}
\label{eq_proposition_inequalities_dp_for_different_projections_definition_B_T}
B_{T}(\delta) = \big\{ %
    (x,y)\in X\times Y \,:\, d_Y(y,T(x)) \geq \delta
\mspace{1mu} \big\}. %
\end{equation}
Then, given $p \in [1,+\infty)$ and $\Qfact\geq 0$, there are nonnegative
constants $A$, $B$, $C$ depending only on $p$, $\Qfact$ and $\diam{Y}$
such that
\begin{equation}
\label{eq_inequality_dp_T_h_general_projection}
d_p(T_h,T) \leq A \cdot h + B \cdot \delta +
C \cdot \semidiscrete{\pi_h} \big[B_T(\delta)\big]^{\frac{1}{p}}
\end{equation}
for every feasible $h$-plan $\pi_h$ and
every projection $T_h \in \mathcal{T}^{\Qfact}[\pi_h]$.
\end{proposition}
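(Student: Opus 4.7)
The strategy is to translate the pointwise information about $T_h$ provided by the Nearness Lemma into an integral bound against $\semidiscrete{\pi_h}$, and then to dispose of that integral by splitting it across $B_T(\delta)$ and its complement. The semidiscrete version $\semidiscrete{\pi_h}$ is exactly the right tool here because its first marginal is $\mu$ (so it sees the same pseudometric $d_p$ that appears on the left) while its disintegration \eqref{eq_formula_disintegration} encodes, on each fibre $\{x\}\times Y$ with $x\in E_i$, the weights $(\pi_{ij}/\mu_i)_{j\in J}$ that also drive the definition of $T_h(x)=y_i$.

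The first step is pointwise. Fix $i\in I_+$ and $x\in E_i$, so that $T_h(x)=y_i$. If $T_h\in\mathcal{T}_{GM}^{\Qfact}[\pi_h]$, then $y_i$ is a $\Qfact h$-approximate geometric median of $(y_j)_j$ with weights $(\pi_{ij}/\mu_i)_j$, and part \ref{nearness_lemma_item_general} of the Nearness Lemma applied to $y=T(x)$ yields
\[
d_Y(T(x),T_h(x))\;\leq\;\Qfact h\;+\;2\sum\nolimits_{j}\tfrac{\pi_{ij}}{\mu_i}\,d_Y(T(x),y_j).
\]
If instead $T_h\in\mathcal{T}_B[\pi_h]$, part \ref{nearness_lemma_item_linear} gives the stronger bound $d_Y(T(x),T_h(x))\leq\sum_j \tfrac{\pi_{ij}}{\mu_i}d_Y(T(x),y_j)$, which is in particular dominated by the same right-hand side. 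So the displayed inequality holds for every $T_h\in\mathcal{T}^{\Qfact}[\pi_h]$ and every $x$ in $\bigsqcup_{i\in I_+}E_i$, which is a set of full $\mu$-measure by \eqref{eq_measure_partitioning_by_I_plus_and_J_plus}.

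The second step is a globalization. Raising the pointwise bound to the power $p$ with the elementary inequality $(a+b)^p\leq 2^{p-1}(a^p+b^p)$ and then applying Jensen's inequality to $\phi(t)=t^p$ against the probability weights $(\pi_{ij}/\mu_i)_j$ yields
\[
d_Y(T(x),T_h(x))^p\;\leq\;2^{p-1}(\Qfact h)^p\;+\;2^{2p-1}\sum\nolimits_{j}\tfrac{\pi_{ij}}{\mu_i}\,d_Y(T(x),y_j)^p.
\]
Integrating over $X$ against $\mu$, using \eqref{eq_measure_partitioning_by_I_plus_and_J_plus} to restrict the summation to $i\in I_+$, and then recognizing the right-hand side via the disintegration formula \eqref{eq_formula_evaluation_disintegration} applied to $g(x,y)=d_Y(T(x),y)^p$, one arrives at
\[
d_p(T_h,T)^p\;\leq\;2^{p-1}(\Qfact h)^p\;+\;2^{2p-1}\!\int_{X\times Y}\!d_Y(T(x),y)^p\,d\semidiscrete{\pi_h}(x,y).
\]

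Finally, split the integral across $B_T(\delta)$ and its complement: on $(B_T(\delta))^c$ the integrand is bounded by $\delta^p$ and on $B_T(\delta)$ it is bounded by $\diam{Y}^p$. This gives
\[
\int_{X\times Y}d_Y(T(x),y)^p\,d\semidiscrete{\pi_h}\;\leq\;\delta^p\;+\;\diam{Y}^p\,\semidiscrete{\pi_h}[B_T(\delta)],
\]
and taking $p$-th roots via the subadditivity $(a_1+a_2+a_3)^{1/p}\leq a_1^{1/p}+a_2^{1/p}+a_3^{1/p}$ produces \eqref{eq_inequality_dp_T_h_general_projection} with constants $A=2^{(p-1)/p}\Qfact$, $B=2^{(2p-1)/p}$, $C=2^{(2p-1)/p}\diam{Y}$, all depending only on $p$, $\Qfact$ and $\diam{Y}$. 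No step presents serious difficulty once the semidiscrete version has been introduced; the only substantive point is noticing that the Nearness Lemma's pointwise statement, after a Jensen step, lines up exactly with the disintegration of $\semidiscrete{\pi_h}$ against $\mu$, so that the sum over $i$ of integrals over $E_i$ collapses into a single expression in $\semidiscrete{\pi_h}$.
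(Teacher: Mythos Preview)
Your proof is correct and follows essentially the same route as the paper's: apply the Nearness Lemma pointwise on each $E_i$, globalize via the disintegration of $\semidiscrete{\pi_h}$ with respect to $\mu$, and split across $B_T(\delta)$. The only cosmetic difference is the order of operations --- you push the $p$-th power inside the weighted sum by Jensen \emph{before} integrating and splitting (so you need the function version of \eqref{eq_formula_evaluation_disintegration} for $g(x,y)=d_Y(T(x),y)^p$, a routine extension), whereas the paper first splits the sum over $j$ into good and bad indices, then raises the resulting three-term bound to the power $p$, and finally identifies the bad-index sum with $\semidiscrete{\pi_h}[B_T(\delta)]$ using \eqref{eq_formula_evaluation_disintegration} only for sets; this yields the constants $3^{(p-1)/p}\Qfact,\;2\cdot 3^{(p-1)/p},\;2\cdot 3^{(p-1)/p}\diam{Y}$ in place of your $2^{(p-1)/p}\Qfact,\;2^{(2p-1)/p},\;2^{(2p-1)/p}\diam{Y}$.
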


\begin{proof}
Fix a map $T\in\mathcal{B}(X,Y)$,
a feasible $h$-plan $\pi_h$ and $\delta>0$.
For each $x\in X$, partition
$J$ into a bad set and a good one, respectively given by
\begin{equation}
B_{h,\delta}(x) = \big\{ j : d_Y(y_j, T(x)) \geq  \delta \big\}
\,\text{ and }\,
G_{h,\delta}(x) = \big\{ j : d_Y(y_j, T(x))   <   \delta \big\} .
\end{equation}
Observe that the bad sets $B_{h,\delta}(x)$ and $B_T(\delta)$ are linked
by the relation
\begin{equation}
\label{eq_key_relation_bad_sets}
j \in B_{h,\delta}(x)
\iff
(x,y_j) \in B_T(\delta)
\iff
y_j \in B_T(\delta)_x
\end{equation}
for every $j\in J$, where the subscript $x$ in $B_T(\delta)_x$
denotes a vertical slice of the set $B_T(\delta)$
as in \Cref{proposition_semidiscrete_pi_h}.

We will write the proof under the assumption
$T_h \in \mathcal{T}^{\Qfact}_{GM}[\pi_h]$, extending it to the general case
$T_h \in \mathcal{T}^{\Qfact}[\pi_h] =
\mathcal{T}^{\Qfact}_{GM}[\pi_h] \cup \mathcal{T}_{B}[\pi_h]$
by showing that the resulting constants $A,B,C$
also work for the case $T_h \in \mathcal{T}_{B}[\pi_h]$.
Thus, for $i\in I_+$ and $x \in E_i$, the aforesaid assumption implies that
$T_h(x)$ is a $\Qfact h$-approximate geometric
median for $(y_j)_j$ with weights $(\pi_{ij}/\mu_i)_j$.
Item
\ref{nearness_lemma_item_general}
in Nearness Lemma \ref{nearness_lemma} then yields
\begin{align}
\label{eq_proof_proposition_inequalities_dp_application_nearness_lemma}
d_Y(T_h(x),T(x)) \leq
\Qfact  h
    +
2 \sum\nolimits_{j}{
    \mathlarger{\tfrac{\pi_{ij}}{\mu_i}} \, d_Y(y_j,T(x))
}\,.
\end{align}

Fixing $i\in I_+$ and $x \in E_i$, the considerations above imply
\begin{equation*}
\begin{multlined}
\sum_{j\in J}{\mathlarger{\tfrac{\pi_{ij}}{\mu_i}} \, d_Y(y_j,T(x))}
=
\sum_{j\in G_{h,\delta}(x)}{\mathlarger{\tfrac{\pi_{ij}}{\mu_i}} \, d_Y(y_j,T(x))}
    + \sum_{j\in B_{h,\delta}(x)}{\mathlarger{\tfrac{\pi_{ij}}{\mu_i}} \, d_Y(y_j,T(x))} \\
\leq \,
    \delta \mspace{1mu} \sum\nolimits_{j\in G_{h,\delta}(x)}{\mathlarger{\tfrac{\pi_{ij}}{\mu_i}}}
    \;+\;
    D_Y  \mspace{1mu} \sum\nolimits_{j\in B_{h,\delta}(x)}{\mathlarger{\tfrac{\pi_{ij}}{\mu_i}}}
\,\leq\,
    \delta \;+\; D_Y \mspace{1mu} \sum\nolimits_{j\in B_{h,\delta}(x)}{\mathlarger{\tfrac{\pi_{ij}}{\mu_i}}}\,,
\end{multlined}
\end{equation*}
where $D_Y \defeq \diam{Y}$.
This inequality together with
\eqref{eq_proof_proposition_inequalities_dp_application_nearness_lemma}
yields
\begin{align}
\label{eq_proof_proposition_inequalities_dp_key_inequality_before_jensen}
d_Y(T_h(x),T(x)) \leq
\Qfact  h + 2\delta + 2 D_Y \mspace{-2mu} \sum\nolimits_{j\in B_{h,\delta}(x)}{\mathlarger{\tfrac{\pi_{ij}}{\mu_i}}}\,.
\end{align}
Then, by applying Jensen's inequality to the RHS of
\eqref{eq_proof_proposition_inequalities_dp_key_inequality_before_jensen}
we get
\begin{align}
\label{eq_proof_proposition_inequalities_dp_applying_jensen_inequality}
\begin{split}
d_Y^{\,p}(T_h(x),T(x))
&\leq
    \left( \Qfact  h + 2\delta + 2 D_Y \sum\nolimits_{j\in B_{h,\delta}(x)}{\mathlarger{\tfrac{\pi_{ij}}{\mu_i}}} \right)^p \\
&\leq
    3^{p-1}\left( (\Qfact  h)^p + (2\delta)^p + (2 D_Y)^p \left(\sum\nolimits_{j\in B_{h,\delta}(x)}{\mathlarger{\tfrac{\pi_{ij}}{\mu_i}}}\right) \right),
\end{split}
\end{align}
where the last inequality follows from the fact that
$\left(
    \sum_{j\in B_{h,\delta}(x)}{\mathlarger{\tfrac{\pi_{ij}}{\mu_i}}}
\right) \leq1$.

Now integrate
\eqref{eq_proof_proposition_inequalities_dp_applying_jensen_inequality}
over $E_i$ for fixed $i\in I_+$ to get
\begin{equation*}
\begin{multlined}
\int_{E_i}{d_Y^{\,p}(T_h(x),T(x)) \,d\mu(x)} \\
\qquad\leq 3^{p-1} \mu_i \left( (\Qfact  h)^p + (2\delta)^p \right)
+ 3^{p-1} (2 D_Y)^p \int_{E_i}{\left(\sum\nolimits_{j\in B_{h,\delta}(x)}{\mathlarger{\tfrac{\pi_{ij}}{\mu_i}}}\right) d\mu(x)}.
\end{multlined}
\end{equation*}
Recall that $\mu$ is concentrated on $\bigsqcup_{i\in I_+}{E_i}$,
as explained in
\Cref{remark_measure_partitioning_by_I_plus_and_J_plus}.
Therefore, summing the previous inequality over $i\in I_+$ we obtain
\begin{equation}
\begin{multlined}
\label{eq_proof_proposition_inequalities_dp_ineq_after_sum_over_I_plus}
\int_{X}{d_Y^{\,p}(T_h(x),T(x)) \,d\mu(x)} \\
\qquad\leq 3^{p-1} \left( (\Qfact  h)^p + (2\delta)^p \right)
+ 3^{p-1} (2 D_Y)^p \sum_{i\in I_+}{\int_{E_i}{\left( \sum\nolimits_{j\in B_{h,\delta}(x)}{\mathlarger{\tfrac{\pi_{ij}}{\mu_i}}}\right) d\mu(x)}}.
\end{multlined}
\end{equation}

The last step in the proof consists in verifying that the outer sum on the RHS of
\eqref{eq_proof_proposition_inequalities_dp_ineq_after_sum_over_I_plus}
agrees with $\semidiscrete{\pi_h} [B_T(\delta)]$.
Indeed, notice that for $i\in I_+$ and $x\in E_i$ we have
\begin{equation*}
\sum\nolimits_{j\in B_{h,\delta}(x)}{\mathlarger{\tfrac{\pi_{ij}}{\mu_i}}} =
\sum\nolimits_{j\in J}{\mathlarger{\tfrac{\pi_{ij}}{\mu_i}} \, \mathbbm{1}_{B_{h,\delta}(x)}(j) }
\stackrel[\eqref{eq_key_relation_bad_sets}]{}{=}
\sum\nolimits_{j\in J}{\mathlarger{\tfrac{\pi_{ij}}{\mu_i}} \, \delta_{y_j} \big[ B_T(\delta)_x \big] },
\end{equation*}
so disintegration identity
\eqref{eq_formula_evaluation_disintegration}
implies
\begin{equation*}
\sum_{i\in I_+}{\int_{E_i}{\mspace{-8mu}\Big(\sum\nolimits_{j\in B_{h,\delta}(x)}{\mathlarger{\tfrac{\pi_{ij}}{\mu_i}}}\Big) \mspace{1mu} d\mu(x)}}
=
\sum_{i\in I_+}{\int_{E_i}{\mspace{-8mu}\Big(\sum\nolimits_{j}{\mathlarger{\tfrac{\pi_{ij}}{\mu_i}} \, \delta_{y_j} [ B_T(\delta)_x ] }\Big) \mspace{1mu} d\mu(x)}}
\stackrel[\eqref{eq_formula_evaluation_disintegration}]{}{=}
\semidiscrete{\pi_h} [B_T(\delta)] \mspace{1mu} .
\end{equation*}
Plugging this into inequality
\eqref{eq_proof_proposition_inequalities_dp_ineq_after_sum_over_I_plus}
and taking the $p$-th root on both sides yields
\begin{equation}
\label{eq_last_inequality_d_p_semidiscrete_measure}
\begin{aligned}
d_p(T_h,T)
&\leq  \Big(  3^{p-1} \left( (\Qfact  h)^p + (2\delta)^p \right)
+ 3^{p-1} (2 D_Y)^p \; \semidiscrete{\pi_h} [B_T(\delta)] \Big)^{\frac{1}{p}}\\
&\leq  3^\frac{p-1}{p} \left( \Qfact  h + 2\delta \right)
+ 3^\frac{p-1}{p} (2 D_Y) \; \semidiscrete{\pi_h} [B_T(\delta)] ^{\frac{1}{p}}\, ,
\end{aligned}
\end{equation}
completing the proof of
\eqref{eq_inequality_dp_T_h_general_projection}
for the case $T_h \in \mathcal{T}^{\Qfact}_{GM}[\pi_h]$.
Note that after rewriting
\eqref{eq_last_inequality_d_p_semidiscrete_measure}
in the form \eqref{eq_inequality_dp_T_h_general_projection}
the resulting constants $A,B,C$ depend only on $p$, $\Qfact$ and $D_Y$.
In particular, they are independent of both $\pi_h$ and $T_h$.

If $T_h \in \mathcal{T}_{B}[\pi_h]$, then $(Y,d_Y)$ is assumed to be normed
and $T_h(x)$ is a barycenter for each $x\in E_i$ with $i\in I_+$.
Once again, Nearness Lemma \ref{nearness_lemma} implies
\begin{align*}
d_Y(T_h(x),T(x)) \leq
\sum\nolimits_{j}{\mathlarger{\tfrac{\pi_{ij}}{\mu_i}} \, d_Y(y_j,T(x))}
\end{align*}
for $i\in I_+$ and $x\in E_i$,
which improves \eqref{eq_proof_proposition_inequalities_dp_key_inequality_before_jensen} to
\begin{align*}
d_Y(T_h(x),T(x)) \leq
\delta + D_Y \mspace{-2mu} \sum\nolimits_{j\in B_{h,\delta}(x)}{\mathlarger{\tfrac{\pi_{ij}}{\mu_i}}}\,.
\end{align*}
Rewriting the proof above from this inequality instead of
\eqref{eq_proof_proposition_inequalities_dp_key_inequality_before_jensen}
shows that constants $A,B,C$ deduced from
\eqref{eq_last_inequality_d_p_semidiscrete_measure}
work also for $T_h \in \mathcal{T}_{B}[\pi_h]$.
\end{proof}

\begin{remark}
It follows from the previous proof that constants
\begin{equation}
\label{eq_inequality_dp_T_h_GM_Delta}
A = 3^{\frac{p-1}{p}} \Qfact
\;,\quad
B = 2 \cdot 3^{\frac{p-1}{p}}
\;,\quad
C = 2 \cdot 3^{\frac{p-1}{p}} \diam{Y}
\end{equation}
work for $T_h \in \mathcal{T}^{\Qfact}_{GM}[\pi_h]$.
Further, rewriting the proof shows
that constants
\begin{equation}
\label{eq_inequality_dp_T_h_GM}
A = 0
\;,\quad
B = 2 \cdot 2^{\frac{p-1}{p}}
\;,\quad
C = 2 \cdot 2^{\frac{p-1}{p}} \diam{Y}
\end{equation}
work for $T_h \in \mathcal{T}_{GM}[\pi_h]$, while for
$T_h \in \mathcal{T}_{B}[\pi_h]$
we get the constants
\begin{equation}
\label{eq_inequality_dp_T_h_B}
A = 0
\;,\quad
B = 2^{\frac{p-1}{p}}
\;,\quad
C = 2^{\frac{p-1}{p}} \diam{Y}.
\end{equation}
Note that constants
\eqref{eq_inequality_dp_T_h_GM_Delta}
work for $T_h \in \mathcal{T}^{\Qfact}[\pi_h]$
since they dominate
\eqref{eq_inequality_dp_T_h_GM}
and
\eqref{eq_inequality_dp_T_h_B}
for every $\Qfact \geq 0$.
Similarly, constants \eqref{eq_inequality_dp_T_h_GM} work for
$T_h \in \mathcal{T}[\pi_h]$.
\end{remark}

\begin{manualtheorem}{\ref*{introtheorem_conv_maps}}
\label{introtheorem_conv_maps_restatement}
Let $(X,Y,\mu,\nu,c)$ be a compact OT problem satisfying hypothesis
\ref{strong_uniqueness_hypothesis} with optimal transport $T_*$.
Then, if $(T_k)_k$ is an approximating sequence of projection maps,
we have $d_p(T_k,T_*)\longconverges[k]0$
for every $1\leq p < \infty$.
\end{manualtheorem}

\begin{proof}
According to Definitions
\ref{def_approximating_sequence_projection_maps}
and
\ref{def_approximating_sequence_discrete_plans},
there are a quality parameter $\Qfact \geq 0$ and
an approximating sequence of discrete plans $(\pi_k)_k$
such that $\pi_k$ is an $\varepsilon_k$-optimal $h_k$-plan with
$T_k \in \mathcal{T}^{\Qfact}[\pi_k]$ for all $k$, where
$(\varepsilon_k)_k$ and $(h_k)_k$ satisfy
$\varepsilon_k, h_k\longconverges[k]0$.
Fixing $1\leq p <\infty$, it follows from
\Cref{proposition_inequalities_dp_for_different_projections}
that
\begin{equation}
\label{eq_proof_introtheorem_conv_maps_restatement}
d_p(T_k,T_*) \leq A \cdot h_k + B \cdot \delta +
C \cdot \semidiscrete{\pi_k} [B_{T_*}(\delta)]^{\frac{1}{p}}
\end{equation}
for all $k$ and every $\delta>0$, where $B_{T_*}(\delta)$ is defined
as in
\eqref{eq_proposition_inequalities_dp_for_different_projections_definition_B_T}
by taking $T=T_*$.
Note that the same constants $A$, $B$, $C$ work for every $k$ and
every $\delta$, since they depend only on $p$, $\Qfact$ and $\diam{Y}$.
Now consider arbitrary $\varepsilon>0$ and fix $\delta>0$ with
$B\cdot\delta < \varepsilon$.
Notice that
$\semidiscrete{\pi_k} [B_{T_*}(\delta)] \longconverges[k] 0$
by \Cref{proposition_semidiscrete_pi_concentrate_around_T_opt},
so taking $\limsup_{k}$ on both sides of
\eqref{eq_proof_introtheorem_conv_maps_restatement}
yields $\limsup_{k}{\,d_p(T_k,T_*)} \leq \varepsilon$
and the proof is complete.
\end{proof}

\bigsubsection{Discussion}
\label{discussion_projection_maps}

We already showed that
\Cref{introtheorem_conv_maps_restatement}
fails for semicontinuous $c$
and also for $p=\infty$
in
\Cref{example_no_convergence_for_semicontinuous_cost}
and
\Cref{remark_conv_maps_fails_for_p_equals_infty}
respectively.
In order to keep on testing the sharpness of Theorem
\ref{introtheorem_conv_maps_restatement},
we may ask if it is possible to generalize its conclusion to the
case where an optimal transport map $T_*$ exists and satisfies
$\Koperator[*]=\Moperator[T_*]$ but fails to be unique.
As we did for Theorem
\ref{introtheorem_conv_values_plans_restatement},
we may define
\[
\minsetmonge = \big\{\mspace{2mu}
    T \in \mathcal{T}(\mu,\nu)
        \mspace{2mu}:\mspace{2mu}
    \Moperator[T] = \Moperator[T_*] = \Koperator[*]
\mspace{2mu}\big\}
\]
and then ask if $d_p(T_k,\minsetmonge) \longconverges[k] 0$.
It turns out that the answer is no, as shown by
\Cref{example_non_convergence_multiple_maps}
below.
Also, we may ask if the conclusion in Theorem
\ref{introtheorem_conv_maps_restatement}
still holds when both
\eqref{kantorovich_problem} and \eqref{monge_problem}
admit unique minimizers but $\Koperator[*] < \Moperator[T_*]$.
Once again, the answer is negative, as show by
\Cref{example_no_convergence_maps_KU_and_MU_but_not_SU}
below.
Note that constructing an example to illustrate this last case
necessarily requires taking $\mu$ with at least one atom.
This is because, as
explained in
\nameref{section_introduction},
for non-atomic $\mu$ we always have
$\min_{\mathcal{A}(\mu,\nu)}{\Koperator}
= \inf_{\mathcal{T}(\mu,\nu)}{\Moperator}$
and then $\Koperator[*] = \Moperator[T_*]$ follows.

\begin{example}
\label{example_non_convergence_multiple_maps}
Take $X,Y,\mu,\nu$ as in
\Cref{example_only_weak_convergence_for_pi_k}
but now consider the cost function $c(x,y)=(x-y)^2(1-x-y)^2$.
Then the zero set of $c$ is
\[
Z_c =  \big\{ (x,x) : x \in [0,1] \big\}
\;\cup\;
\big\{ (x,1-x) : x \in [0,1] \big\}
\subset [0,1]\times[0,1]
\]
and there is no unique optimal transport map $T_*$.
Indeed, the maps $T_\varepsilon$ defined by
\begin{align*}
T_\varepsilon(x) =
    \begin{cases}
        \; x       &, \; \text{if} \;\; \varepsilon < x < 1-\varepsilon \\
        \; 1 - x       &, \; \text{otherwise}
    \end{cases}
\end{align*}
clearly satisfy $\Moperator[T_\varepsilon]=0=\Koperator[*]$
and are easily seen to be in $\mathcal{T}(\mu,\nu)$ for every $\varepsilon$.

Fix $k\in\mathbb{N}$ and consider the same $h_k$-partitions as in
\Cref{example_only_weak_convergence_for_pi_k}.
Take $\pi_k$ to be the ``symmetrical'' $h_k$-plan concentrated
on $Z_c$ and defined by
\begin{equation*}
\pi_k = \frac{1}{2}
\left[ \mspace{3mu}
\sum_{i=1}^{2k+1}{\tfrac{1}{2k+1}\mspace{2mu}\delta_{(x_i,y_i)}}
+
\sum_{i=1}^{2k+1}{\tfrac{1}{2k+1}\mspace{2mu}\delta_{(x_i,y_{2k+2-i})}}
\right],
\end{equation*}
which is illustrated in
\Cref{figure_example_non_convergence_multiple_maps}.
By symmetry, we can take
$T_k \in \mathcal{T}[\pi_k]$ to be the constant $\nicefrac{1}{2}$.
As the graph of any $T\in\minsetmonge$ is essentially contained
in $Z_c$, we get
\[
d_p(T_k,T) = \big\|T_k-T\big\|_{L^p[0,1]}
\geq \big\|\nicefrac{1}{2}-T\big\|_{L^p[\nicefrac{3}{4},\, 1]}
\geq \tfrac{1}{16}
\]
for $p\in[1,\infty]$. It follows that $d_p(T_k,\minsetmonge)\geq \tfrac{1}{16}$
and repeating this construction for every $k$ yields $(T_k)_k$ with
$d_p(T_k,\minsetmonge) \centernot \longrightarrow 0$ for every $p\in[1,\infty]$.
\end{example}

\begin{example}
\label{example_no_convergence_maps_KU_and_MU_but_not_SU}
Take $X=Y=[-2,2]$ and the quadratic cost $c(x,y)=(x-y)^2$.
If $\mathcal{H}^1$ is the $1$-dimensional Hausdorff measure, let
\[
\mu = \tfrac{1}{2}\mspace{2mu}\delta_{-2}
    + \tfrac{1}{2}\mspace{2mu}\mathcal{H}^1\rvert_{[-1,0]}
\;\;\text{ and }\;\;
\nu = \tfrac{1}{2}\mspace{2mu}\mathcal{H}^1\rvert_{[0,1]}
    + \tfrac{1}{2}\mspace{2mu}\delta_{2}.
\]
Then both
\eqref{kantorovich_problem}
and
\eqref{monge_problem}
have unique minimizers $\pi_*$ and $T_*$ but $\pi_* \neq \pi_{T_*}$, so
\ref{strong_uniqueness_hypothesis} fails.
To see why note first that feasible maps for \eqref{monge_problem} send atoms to
atoms. Then, it follows from monotonicity considerations that $T_*$ is given by
\[
T_*(x) = 2 \cdot \mathbbm{1}_{\{-2\}}(x)
    + (x+1) \cdot \mathbbm{1}_{[-1,0]}(x)
\;\text{ and }\;
\Moperator[T_*] = \Koperator[\pi_{T_*}] = \tfrac{17}{2}.
\]
By monotonicity considerations again,
the optimal plan for \eqref{kantorovich_problem} is
\[
\pi_* = \tfrac{1}{2}\mspace{2mu}\mathcal{H}^1\vert_{ \{-2\} \times [0,1] }
    + \tfrac{1}{2}\mspace{2mu}\mathcal{H}^1\vert_{ [-1,0] \times \{2\} }
\;\text{ and }\;
\Koperator[\pi_*] = \tfrac{19}{3} < \Moperator[T_*].
\]
It is then clear that  $\pi_* \neq \pi_{T_*}$,
as shown in
\Cref{figure_example_no_convergence_maps_KU_and_MU_but_not_SU}.
Notice that projection maps $T_k$ are obtained by working with an approximation
\eqref{kantorovich_discrete_problem} of \eqref{kantorovich_problem},
so it is reasonable to expect that measures $\pi_{T_k}$ approximate
the barycentric projection of $\pi_*$.
In fact, it is easy to check that $(T_k)_k$ converges $\mu$-a.e. to the map
$T$ given by
\[
T(x) = \tfrac{1}{2} \cdot \mathbbm{1}_{\{-2\}}(x)
    + 2 \cdot \mathbbm{1}_{[-1,0]}(x),
\]
which represents the barycentric projection of $\pi_*$.
Then, since $\mu[\{T \neq T_*\}] = 1$ and $T_k \longconverges[k] T$ $\mu$-a.e.,
it follows that
$d_p(T_k,T_*) \centernot \longrightarrow 0$.
\end{example}

\begin{figure}
\centering
\subfigure[\Cref{example_non_convergence_multiple_maps}]{
    \label{figure_example_non_convergence_multiple_maps}
    \includegraphics[width=0.45\textwidth]{%
        ./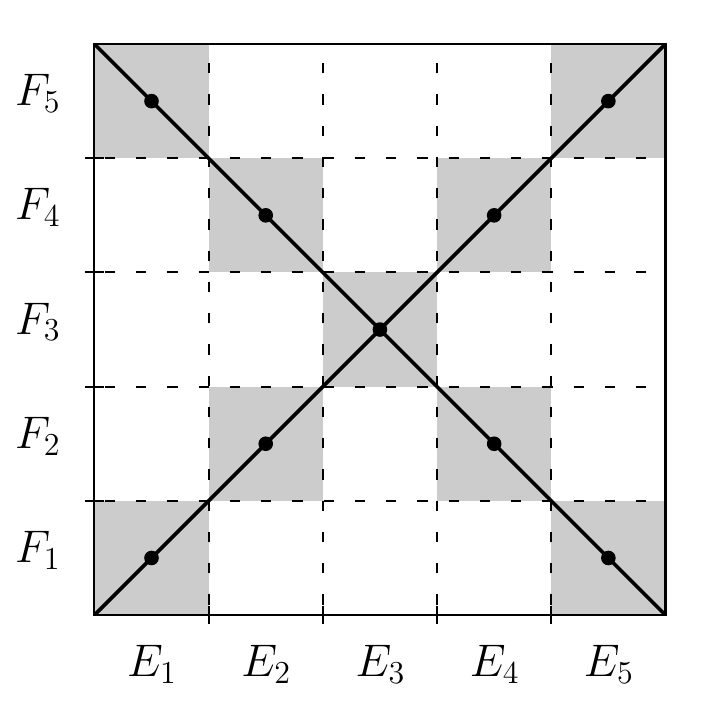%
        }
}\hspace{0mm}%
\subfigure[\Cref{example_no_convergence_maps_KU_and_MU_but_not_SU}]{
    \label{figure_example_no_convergence_maps_KU_and_MU_but_not_SU}
    \includegraphics[width=0.45\textwidth]{%
        ./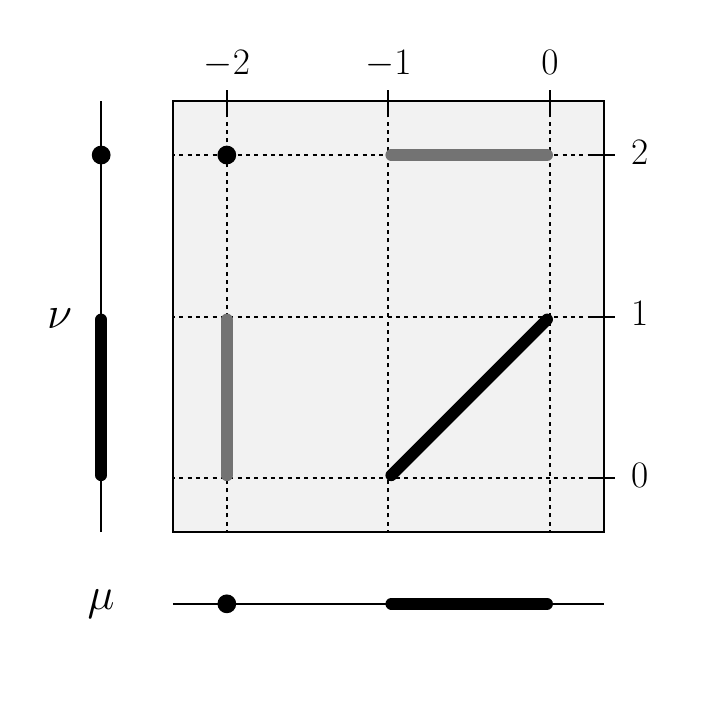%
    }
}
\captionsetup{width=0.9\linewidth}
\caption{
In \ref*{sub@figure_example_non_convergence_multiple_maps}
the thick lines show the zero set $Z_c$ of the cost $c$ and the dots
inside the gray squares show $\supp{\pi_k}$.
In \ref*{sub@figure_example_no_convergence_maps_KU_and_MU_but_not_SU}
the supports of $\mu$, $\nu$ and $\pi_{T*}$ are plotted in black and
the support of $\pi_*$ is plotted in dark gray.
}
\end{figure}

\section{Convergence of Projection Maps II}
\label{section_convergence_projection_maps_II}

Now we focus on
\Cref{introcorollary_fully_discrete_version_convergence_maps},
whose content is clarified below.

\begin{definition}
Consider an $h$-partition $\mathcal{C}_X = \{(E_i,x_i)\}_{i\in I}$ for $X$
and let $\mu_i = \mu[E_i]$ for all $i\in I$.
For each $1 \leq p < \infty$,
a pseudometric $\operatorname{disc}_p$ on $Y^X$ is defined by
\[
\operatorname{disc}_p(T_1,T_2)
= \left(\sum\nolimits_{i\in I}{
    \mu_i \mspace{2mu} d^{\,p}_Y(T_1(x_i),T_2(x_i))
}\right)^{1/p}
\;\text{ for $T_1, T_2 \in Y^X$.}
\]
\end{definition}

\begin{remark}
Since $\operatorname{disc}_p(T_1,T_2)$ is undefined unless some
reference $h$-partition $\mathcal{C}_X$ is given,
we stick to the following convention:
given a projection $T_h$,
$\operatorname{disc}_p(T_h,T)$ will always be evaluated with
respect to the $h$-partition used for constructing $T_h$.
Thus, if each projection map $T_k$ in
Corollary \ref{corollary_equivalence_discrete_continuous}
is constructed from
$\mathcal{C}^{\,k}_X = \big\{ \big( E_i^k, x_i^k \big) \big\}_{i\in I_k}$,
we write $\mu\big[E_i^k\big] = \mu_i^k$ for all $i \in I_k$ and then we have
\begin{equation}
\label{eq_disc_p_is_a_riemann_sum}
\operatorname{disc}_p(T_k,T)
= \left(\sum\nolimits_{i\in I_k}{
    \mu_i^k \mspace{2mu}
        d^{\,p}_Y \big( T_k\big(x_i^k\big) , T\big(x_i^k\big) \big)
}\right)^{1/p}
\end{equation}
for every $k$.
\end{remark}

\begin{manualcorollary}{\ref*{introtheorem_conv_maps}}[Reformulation]
\label{corollary_equivalence_discrete_continuous}
Assume $(X,Y,\mu,\nu,c)$ is a compact OT problem satisfying
\ref{strong_uniqueness_hypothesis} and let $T_*:X\to Y$
be an optimal transport map.
Then, the following statements are equivalent:
\begin{enumerate}[%
label=\roman*), ref=\emph{\roman*)}, topsep=0pt, itemsep=5pt, leftmargin=35pt ]
\item \label{corollary_equivalence_discrete_continuous_item_mu}
The discontinuity set of $T_*$ is a null set for $\mu$,
that is, $\mu[D(T_*)] = 0$.
\item \label{corollary_equivalence_discrete_continuous_item_disc}
It holds that $\operatorname{disc}_p(T_k,T_*)\longconverges[k] 0$
for every $1 \leq p < \infty$ whenever $(T_k)_k$ is an
approximating sequence of projection maps.
\end{enumerate}
\end{manualcorollary}

\begin{remark}
Under the assumptions of
Corollary \ref*{corollary_equivalence_discrete_continuous},
if $\mu[D(T_*)]>0$ and a particular sequence $(T_k)_k$ is given,
then $(T_k)_k$ may or may not converge to $T_*$ in $\operatorname{disc}_p$.
This fact is illustrated by \Cref{example_non_convergence_disc_p} below.
\end{remark}

\begin{example}
\label{example_non_convergence_disc_p}
For $X=Y=[0,1]$ consider $c(x,y)=|x-y|^2$ and take
\[
\mu = \tfrac{1}{2}\mspace{2mu}\delta_{0}
    + \tfrac{1}{2}\mspace{2mu}\mathcal{H}^1\rvert_{[0,1]}
\;\;\text{ and }\;\;
\nu = \tfrac{1}{2}\mspace{2mu}\delta_{0}
    + \tfrac{1}{2}\mspace{2mu}\delta_{1}.
\]
Reasoning as in
\Cref{example_no_convergence_maps_KU_and_MU_but_not_SU}
we see that $(X,Y,\mu,\nu,c)$ satisfies
\ref{strong_uniqueness_hypothesis}
and admits an optimal transport map $T_*$ given by
\[
T_*(x) = 0 \cdot \mathbbm{1}_{\{0\}}(x)
    + 1 \cdot \mathbbm{1}_{(0,1]}(x)
= \mathbbm{1}_{(0,1]}(x).
\]
Even though
\Cref{introtheorem_conv_maps}
implies $d_p(T_k,T_*)\longconverges[k] 0$ for every
approximating sequence of projections $(T_k)_k$,
Corollary \ref{corollary_equivalence_discrete_continuous}
suggests that we cannot expect
$\operatorname{disc}_p(T_k,T_*)\longconverges[k] 0$
since $D(T_*)=\{0\}$ and then $\mu[D(T_*)]=\nicefrac{1}{2}>0$.
We illustrate this below.

Fixing $k\in \mathbb{N}$, take $h_k= k^{-1}$ and consider $h_k$-partitions
$\mathcal{C}_X = \mathcal{C}_Y$ given by
\begin{equation}
\label{example_disc_p_partitions}
E_i = F_i = \big[\tfrac{i-1}{k}, \tfrac{i}{k}\big]
\;\text{ and }\;
x_i = y_i =
    \tfrac{1}{2}\big[\tfrac{i-1}{k} + \tfrac{i}{k}\big] =\tfrac{2i-1}{2k}
\;\;\text{ for $1\leq i \leq k$.}
\end{equation}
Note that the weights $\mu_i=\mu[E_i]$ and $\nu_j=\nu[F_j]$ satisfy
\[
\mu_1 = \tfrac{1}{2} + \tfrac{1}{2k}  \,,\;\;
\mu_2 = \dots = \mu_k = \tfrac{1}{2k} \,,\;\;
\nu_1 = \nu_k = \tfrac{1}{2}
\;\text{ and }\;
\nu_2 = \dots = \nu_{k-1} = 0.
\]
Now consider the optimal $h_k$-plan
$\pi_k = (\pi_{ij})_{ij} \in \mathcal{A}(\mu_{h_k},\nu_{h_k})$
given by
\[
\pi_k =
\tfrac{1}{2}\mspace{2mu}\delta_{(x_1,y_1)}
+
\tfrac{1}{2k}\mspace{2mu}\delta_{(x_1,y_k)}
+
\sum\nolimits_{2\leq i \leq k}{\tfrac{1}{2k}\mspace{2mu}\delta_{(x_i,y_k)}}
\]
and illustrated in
\Cref{figure_example_non_convergence_disc_p_plans}.
Extract a projection $T_k \in \mathcal{T}[\pi_{k}]$ and, as shown in
\Cref{figure_example_non_convergence_disc_p_maps},
note that $T_k$ is far from $T_*$ on $E_1\backslash\{0\}$ for large $k$.
This is a consequence of the fact that
$\nicefrac{1}{2k} = \pi_{1k} \altll \pi_{11} = \nicefrac{1}{2}$ for $k\altgg 1$.
Indeed, for $x\in E_1$,
Nearness \Cref{nearness_lemma}
and
\Cref{definition_projection_maps}
together imply
\begin{align*}
|T_k(x)| &= |T_k(x_1)| = d_Y(0,T_k(x_1)) \leq
2 \left[
\tfrac{\pi_{11}}{\mu_1} \mspace{2mu} d_Y(0,y_1) +
\tfrac{\pi_{1k}}{\mu_1} \mspace{2mu} d_Y(0,y_k)
\right] \\
&= 2 \left[
\tfrac{\nicefrac{1}{2}}
    {\nicefrac{1}{2} \mspace{4mu} + \mspace{4mu}\nicefrac{1}{2k}}
\mspace{4mu} \big| 0 - \tfrac{1}{2k} \big|
+
\tfrac{\nicefrac{1}{2k}}
    {\nicefrac{1}{2} \mspace{4mu} + \mspace{4mu}\nicefrac{1}{2k}}
\mspace{4mu} \big| 0 - \tfrac{2k-1}{2k} \big|
\right]
\leq \tfrac{3}{k}.
\end{align*}
Therefore, for $1\leq p < \infty$ we get
\begin{equation}
\label{eq_example_disc_p_inequality_for_disc_p}
\operatorname{disc}_p(T_k,T_*)
\geq \left( \mu_1 \big|  T_k(x_1) - T_*(x_1) \big|^p \right)^{1/p}
\mspace{-5mu} =
    \big(
        \tfrac{1}{2} \mspace{-3mu} + \mspace{-3mu} \tfrac{1}{2k}
    \big)^{1/p}
    \big| T_k(x_1) \mspace{-3mu} - \mspace{-3mu}  1 \big|
\geq \tfrac{1}{2} \big( 1 \mspace{-3mu} - \mspace{-3mu} \tfrac{3}{k} \big).
\end{equation}
Repeating the construction above for every $k\in\mathbb{N}$ to obtain $(T_k)_k$,
inequality
\eqref{eq_example_disc_p_inequality_for_disc_p}
implies
$\liminf_k{\operatorname{disc}_p(T_k,T_*)} > 0$ and then
$\operatorname{disc}_p(T_k,T_*) \centernot \longconverges 0$.
It is easy to see that we do get
$\operatorname{disc}_p(T_k,T_*) \longconverges[k] 0$
if we repeat the construction above but taking $x_1 = 0$
instead of doing it as in
\eqref{example_disc_p_partitions}.
As explained by
Corollary \ref{corollary_equivalence_discrete_continuous},
the convergence of $(T_k)_k$ to $T_*$ in
$\operatorname{disc}_p$
depends on the choice of $(T_k)_k$ because
$\mu[D(T_*)]>0$.
\end{example}

\begin{figure}
\centering
\subfigure[$\pi_k$ in \Cref{example_non_convergence_disc_p}]{
    \label{figure_example_non_convergence_disc_p_plans}
    \includegraphics[width=0.46\textwidth]{%
        ./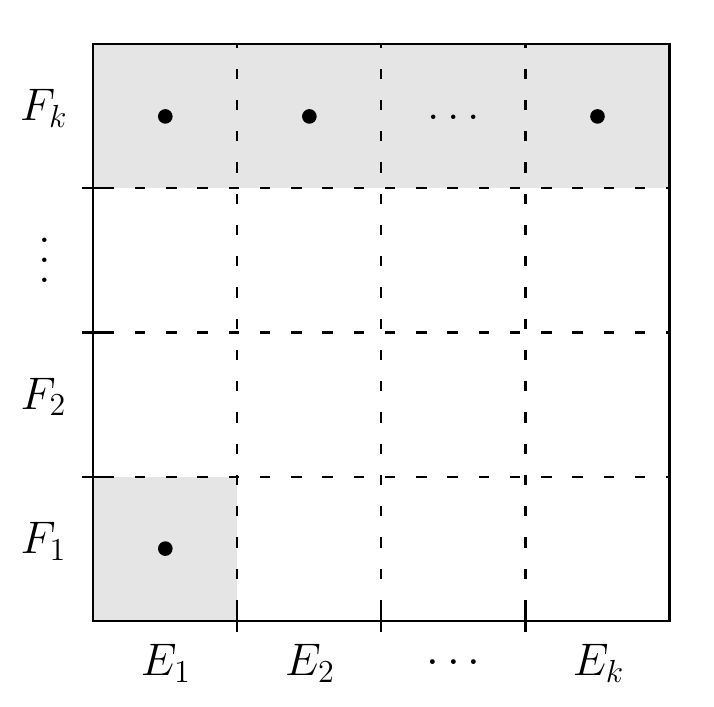%
        }
}\hspace{0mm}%
\subfigure[$T_k,\, T_*$ in \Cref{example_non_convergence_disc_p}]{
    \label{figure_example_non_convergence_disc_p_maps}
    \includegraphics[width=0.45\textwidth]{%
        ./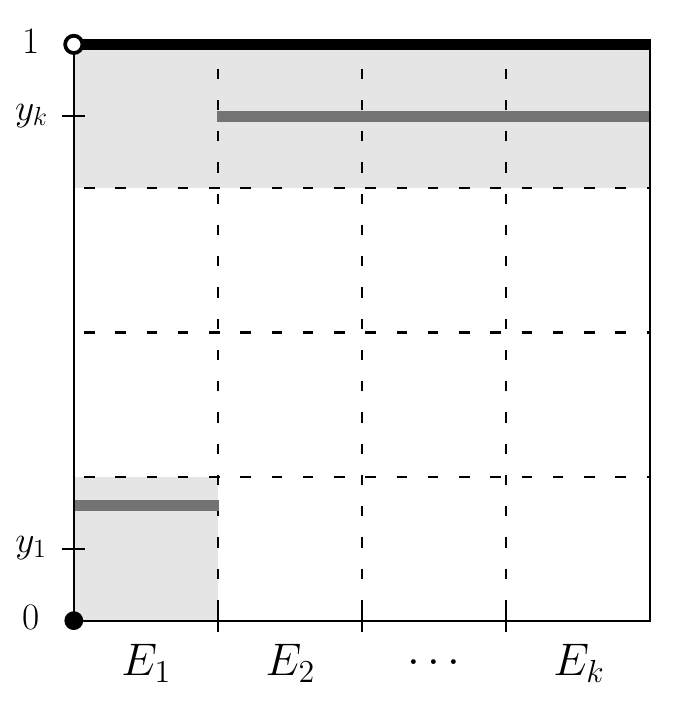%
    }
}
\captionsetup{width=0.9\linewidth}
\caption{
In \ref*{sub@figure_example_non_convergence_disc_p_plans} the dots show
$\supp{\pi_k}$.
In \ref*{sub@figure_example_non_convergence_disc_p_maps} the graph of
$T_*$ is plotted in black and the graph of $T_k$ is plotted in dark gray.
}
\end{figure}

\subsection*{The Proof of Corollary %
    \ref*{introcorollary_fully_discrete_version_convergence_maps}}

Looking at
\eqref{eq_disc_p_is_a_riemann_sum}
it is clear that the sum in
$\operatorname{disc}_p(T_k,T)$
may be regarded as a Riemann sum for the integral in $d_p(T_k,T)$.
In fact, since we have
$d_p(T_k,T_*) \approx 0$ for large $k$ by
\Cref{introtheorem_conv_maps},
what
Corollary \ref{corollary_equivalence_discrete_continuous}
says is that
\begin{equation}
\label{eq_interpretation_corollary_B_riemann_sum}
\sum\nolimits_{i\in I_k}{
    \mu_i^k \mspace{2mu}
        d^{\,p}_Y \big( T_k\big(x_i^k\big) , T_*\big(x_i^k\big) \big)
}
\approx
\int_{X}{d_Y^{\,p}(T_k(x),T_*(x))\,d\mu(x)}
\end{equation}
for large $k$ iff $\mu[D(T_*)] = 0$,
which is reminiscent of Lebesgue's criterion for Riemann integrability.
Moreover, Lebesgue's original proof
\cite[Chapitre II]{lebesgue_book_criterion},
available in
\cite[Section 7.26]{apostol_mathematical_analysis}
in a modern presentation,
shares with our proof of
Corollary \ref*{corollary_equivalence_discrete_continuous}
the fact that both
depend on the notion of \emph{oscillation} for maps, which we recall below.

In comparison with that of Lebesgue's criterion, the proof of
Corollary \ref{corollary_equivalence_discrete_continuous}
is much more complicated for two main reasons.
First, because in our case there are two limiting processes involved since
both the partition and the integrand $d_Y^{\,p}(T_k,T_*)$ in
\eqref{eq_interpretation_corollary_B_riemann_sum}
vary with $k$.
Second, because the very definition of projection maps makes it hard to control
$d_Y(T_k,T_*)$ pointwise, in particular for implication
\ref*{corollary_equivalence_discrete_continuous_item_disc}
$\mspace{-8mu}\implies\mspace{-8mu}$
\ref*{corollary_equivalence_discrete_continuous_item_mu}.
For the sake of clarity we split the proof
in two parts:
first we show the sufficiency of $\mu[D(T_*)] = 0$
for the convergence of $(T_k)_k$ to $T_*$ in $\operatorname{disc}_p$
and then we show its necessity.
The proofs of both implications consist in a sequence of technical lemmas
concerning the oscillation of $T_*$
plus an application of \Cref{introtheorem_conv_maps}.

\subsection*{Oscillation of a Map}

For general metric spaces $(X,d_X)$, $(Y,d_Y)$ let
\[
B_0(x)=\emptyset=B_0(y)
\,,\;
B_\infty(x)=X
\;\text{ and }\;
B_\infty(y) = Y
\;\text{ for all $x\in X$, $y\in Y$. }\;
\]
Given a map $T: X\to Y$, for $E\subset X$ we define the
\emph{oscillation of $T$ on $E$} by
\[
\operatorname{osc}_T(E)
= \diam{T(E)}
= \sup{ \big\{ d_Y(T(x_1), T(x_2))  \,:\, x_1,x_2 \in E \mspace{2mu} \big\}}
\]
if $E$ is nonempty and set $\operatorname{osc}_T(\emptyset) = 0$.
For $E_1 \subset E_2 \subset X$ we get
\[
0 = \operatorname{osc}_T(\emptyset)
\leq \operatorname{osc}_T(E_1)
\leq \operatorname{osc}_T(E_2)
\leq \diam{Y} \leq \infty.
\]
For a point $x\in X$ we define the
\emph{oscillation of $T$ at $x$} by
\[
\operatorname{osc}_T(x)
= \inf\nolimits_{r>0}{\operatorname{osc}_T(B_r(x))}
= \lim\nolimits_{r\to 0}{\operatorname{osc}_T(B_r(x))}.
\]
Note that, in general, we have
$\operatorname{osc}_T(x) \geq \operatorname{osc}_T(\{x\}) = 0$
and $\operatorname{osc}_T(x) \leq \operatorname{osc}_T(U)$ for every
neighborhood $U$ of $x$.
It is clear that $T$ is continuous at $x$ iff
$\operatorname{osc}_T(x)=0$, so its discontinuity set $D(T)$
is related to $\operatorname{osc}_T$ by
\begin{equation}
\label{eq_oscillation_implies_D(T)_is_F_sigma}
D(T) = \big\{
x\in X \,:\,\operatorname{osc}_T(x) > 0
\mspace{2mu}\big\}
=\bigcup\nolimits_{n\in \mathbb{N}}{D_{1/n}(T)},
\end{equation}
where
$D_{\alpha}(T) = \{x\in X \,:\,\operatorname{osc}_T(x) \geq \alpha \}$
for every $\alpha \geq 0$.
It is easy to see that each $D_{\alpha}(T)$ is a closed set,
so
\eqref{eq_oscillation_implies_D(T)_is_F_sigma}
proves that $D(T)$ is an $F_\sigma$ set.
This shows that $D(T)$ is Borel and then the condition $\mu[D(T_*)]=0$ in
Corollary \ref*{corollary_equivalence_discrete_continuous}
is well-defined.

\bigsubsection{Proof of Corollary B : Sufficiency}

\begin{lemma}
\label{lemma_phi_K_is_continuous}
Consider metric spaces $(X,d_X)$ and $(Y,d_Y)$, a map
$T:X\to Y$ and a nonempty compact set $K\subset X\backslash D(T)$.
Then $\phi_K:[0,\infty] \to [0,\infty]$ defined by
\[
\phi_K(r) =
    \sup_{x\in K}{ \, \operatorname{osc}_T(B_r(x))}
\]
is continuous at $0$. That is,
$\phi_K(r)\to 0$ for $r\to 0$.
\end{lemma}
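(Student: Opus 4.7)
The plan is a standard compactness argument: upgrade the pointwise condition $\operatorname{osc}_T(x) = 0$ for $x \in K$ (which holds because $K \cap D(T) = \emptyset$) to uniform smallness of $\operatorname{osc}_T(B_r(x))$ over $x \in K$, at the cost of shrinking $r$ by at most a factor of $2$.

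More concretely, I would fix $\varepsilon > 0$ and, for each $x \in K$, use $\operatorname{osc}_T(x) = \lim_{r \to 0}\operatorname{osc}_T(B_r(x)) = 0$ to pick $r_x > 0$ with $\operatorname{osc}_T(B_{2 r_x}(x)) < \varepsilon$. The open balls $\bigl\{ B_{r_x}(x) \bigr\}_{x \in K}$ cover the compact set $K$, so a finite subcover $\bigl\{ B_{r_{x_i}}(x_i) \bigr\}_{1 \le i \le n}$ exists. Set $r_0 = \min_{1 \le i \le n} r_{x_i} > 0$.

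The key geometric observation is the triangle-inequality inclusion: if $y \in K$ lies in $B_{r_{x_i}}(x_i)$ and $0 < r \le r_0 \le r_{x_i}$, then $B_r(y) \subset B_{r + r_{x_i}}(x_i) \subset B_{2 r_{x_i}}(x_i)$, so by monotonicity of $\operatorname{osc}_T$ on sets we get
\[
\operatorname{osc}_T(B_r(y)) \le \operatorname{osc}_T(B_{2 r_{x_i}}(x_i)) < \varepsilon.
\]
Taking the supremum over $y \in K$ yields $\phi_K(r) \le \varepsilon$ for every $r \in (0, r_0]$. Since $\varepsilon$ was arbitrary and $\phi_K$ is nonnegative and monotone in $r$, this gives $\phi_K(r) \to 0$ as $r \to 0$, which is precisely continuity at $0$.

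There is no real obstacle here; the only thing to be careful about is choosing $r_x$ so that the doubled ball $B_{2 r_x}(x)$ already has small oscillation, which is what lets a nearby point $y$ inherit the bound on a ball of the \emph{same} radius $r$ rather than a strictly smaller one. Without this factor-of-two trick one would end up bounding $\operatorname{osc}_T(B_r(y))$ only for $r$ depending on $y$, which is not sufficient to conclude uniform convergence.
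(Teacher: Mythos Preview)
Your proof is correct and follows essentially the same compactness strategy as the paper's: both arguments fix $\varepsilon>0$, choose for each $x\in K$ a radius $r_x$ on which the oscillation is small, and then use compactness of $K$ to obtain a uniform $r_0$. The only technical difference is that the paper invokes the Lebesgue number of the cover $\{B_{r_x}(x)\}_{x\in K}$ (so that any ball of radius $r<\delta/2$ has diameter less than $\delta$ and hence sits inside some $B_{r_{\tilde x}}(\tilde x)$), whereas you extract a finite subcover and use the doubling trick $B_r(y)\subset B_{2r_{x_i}}(x_i)$; both devices accomplish the same inclusion and the arguments are interchangeable.
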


\begin{proof}
Fix arbitrary $\varepsilon>0$ and for each $x\in K$ take $r_x>0$
with $\operatorname{osc}_T(B_{r_x}(x)) < \varepsilon$, which is possible
since $T$ is continuous at $x$.
The open cover $\{B_{r_x}(x)\}_{x\in K}$ for the compact set $K$ has a
Lebesgue number $\delta>0$ and we take $r_0 = \delta/2>0$.
Note that for any $r\in (0,r_0)$ and any $x\in K$ we have
$\diam{B_{r}(x)}<\delta$, so there is $\tilde{x}\in K$ with
$B_{r}(x) \subset B_{r_{\tilde{x}}}(\tilde{x})$ and then
\[
\operatorname{osc}_T(B_r(x))
= \diam{T(B_{r}(x))}
\leq \diam{T(B_{r_{\tilde{x}}}(\tilde{x}))}
= \operatorname{osc}_T(B_{r_x}(\tilde{x}))
< \varepsilon.
\]
This implies $\phi_K(r)\leq\varepsilon$ for $r\in (0,r_0)$ and completes the
proof.
\end{proof}

\begin{lemma}
\label{lemma_estimation_of_oscillation_for_sufficiently_fine_partitions}
Consider compact metric spaces $(X,d_X)$, $(Y,d_Y)$ and $\mu\in\mathcal{P}(X)$.
Let $T:X\to Y$ be such that $\mu[D(T)]=0$.
Then, given $1\leq p < \infty$ and $\varepsilon>0$, there is $h_0>0$
with the following property:
whenever $\mathcal{C}_X = \{(E_i, x_i)\}_{i\in I}$ is an $h$-partition for
$X$ with $h\leq h_0$, we get
\begin{equation}
\label{eq_lemma_estimation_of_oscillation_for_sufficiently_fine_partitions}
\left(
\sum\nolimits_{i\in I}{
    \mspace{2mu} \mu_i \mspace{1mu} \operatorname{osc}_T^{\,p}(E_i)
}\right)^{1/p} \leq \varepsilon.
\end{equation}
\end{lemma}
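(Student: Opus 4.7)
The plan is to exploit $\mu[D(T)]=0$ by splitting $X$ into a ``good'' compact piece on which $T$ is continuous (so that $\operatorname{osc}_T$ over small balls is uniformly small by \Cref{lemma_phi_K_is_continuous}) and a ``bad'' open neighborhood of $D(T)$ that carries arbitrarily small $\mu$-mass. I would then split the sum in \eqref{eq_lemma_estimation_of_oscillation_for_sufficiently_fine_partitions} according to whether each cell $E_i$ meets the good set or is swallowed by the bad one. This mirrors the classical proof of Lebesgue's criterion for Riemann integrability alluded to in the discussion preceding the lemma.

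First, set $D_Y = \operatorname{diam}(Y)$ and dispose of the trivial case $D_Y = 0$ (where $T$ is constant and the sum vanishes). Since $\mu$ is a finite Borel measure on a compact, hence Polish, space, it is outer regular; because $D(T)$ is an $F_\sigma$ Borel set with $\mu[D(T)]=0$, I can select an open set $U\supset D(T)$ with $\mu[U]\leq \varepsilon^{p}/(2D_Y^{p})$. Then $K\defeq X\setminus U$ is a compact set contained in $X\setminus D(T)$, so \Cref{lemma_phi_K_is_continuous} applies and produces $h_{0}>0$ with $\phi_{K}(h_{0})\leq 2^{-1/p}\varepsilon$.

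Now consider any $h$-partition $\{(E_{i},x_{i})\}_{i\in I}$ with $h\leq h_{0}$ and split $I=I_{K}\sqcup I_{U}$ according to whether $E_{i}$ meets $K$ or not. For $i\in I_{K}$, pick $z_{i}\in E_{i}\cap K$; the diameter bound $\operatorname{diam}(E_{i})\leq h \leq h_0$ gives $E_{i}\subset B_{h_{0}}(z_{i})$, and hence $\operatorname{osc}_{T}(E_{i})\leq \phi_{K}(h_{0})$. For $i\in I_{U}$, we have $E_{i}\subset U$ and use only the crude bound $\operatorname{osc}_{T}(E_{i})\leq D_Y$, exploiting that $\sum_{i\in I_{U}}\mu_{i}\leq \mu[U]$. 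Splitting the sum accordingly, a routine computation yields
\[
\sum_{i\in I}\mu_{i}\operatorname{osc}_{T}^{\,p}(E_{i})
\;\leq\; \phi_{K}(h_{0})^{p}+D_Y^{p}\mu[U]
\;\leq\; \tfrac{\varepsilon^{p}}{2}+\tfrac{\varepsilon^{p}}{2}=\varepsilon^{p},
\]
after which the claimed bound follows by taking $p$-th roots.

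The only delicate point is the choice of $U$: it relies on the outer regularity of $\mu$ applied to the Borel set $D(T)$, which is granted by the Radon property of Borel probability measures on Polish spaces. Beyond that, the proof is pure bookkeeping and I do not anticipate any real obstacle; in particular, the uniformity of $h_0$ (independent of the partition) is immediate from the fact that $\phi_K$ depends only on $K$, which is fixed once $U$ is chosen.
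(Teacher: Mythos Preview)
Your proposal is correct and follows essentially the same strategy as the paper: both isolate a compact $K\subset X\setminus D(T)$ of nearly full $\mu$-measure (you via outer regularity applied to $D(T)$ and setting $K=X\setminus U$, the paper via inner regularity applied directly to $X\setminus D(T)$), invoke \Cref{lemma_phi_K_is_continuous} on $K$, and then split the cells according to whether they meet $K$. The only cosmetic discrepancy is that the paper takes $h_0=r_0/2$ so as to obtain the strict containment $E_i\subset B_{2h}(z_i)\subset B_{r_0}(z_i)$ in \emph{open} balls; since $\operatorname{diam}(E_i)\le h_0$ only guarantees $E_i\subset\overline{B}_{h_0}(z_i)$, you should make the same radius-halving adjustment.
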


\begin{proof}
Fix $p,\varepsilon$ and write $D_Y = \diam{Y}<\infty$. Notice that
$\mu[X\backslash D(T)]=1$, so the inner regularity of $\mu$ implies the
existence of a compact set $K\subset X\backslash D(T)$ with
\begin{equation}
\label{eq_proof_oscillation_T_bound_X_minus_K}
\mu[X\backslash K] <
\left(\tfrac{\varepsilon}{2}\right)^{p} (D_Y^{\,p} + 1)^{-1}.
\end{equation}
Now apply \Cref{lemma_phi_K_is_continuous} to get
$r_0>0$ such that $\phi_K(r)<\varepsilon/2$ for $r\leq r_0$.
We finish the proof by showing that $h_0 = r_0/2$ works.

Fix an $h$-partition $\mathcal{C}_X$ for $X$ with $h\leq h_0$ and define
the index set
\[
I_K = \big \{ \mspace{1mu}
    i\in I \,:\, E_i \cap K \neq \emptyset \mspace{2mu}
\big \} \subset I.
\]
First, consider the case $i\in I_K$ and take $x_0 \in E_i\cap K$.
Note that every $x\in E_i$ satisfies
$d_X(x,x_0) \leq \diam{E_i} \leq h$,
so we get $E_i \subset B_{2h}(x_0) \subset B_{r_0}(x_0)$.
It follows that
\[
\operatorname{osc}_T(E_i)
\leq \operatorname{osc}_T(B_{r_0}(x_0))
\leq \phi_K(r_0) < \tfrac{\varepsilon}{2}
\]
and repeating the reasoning for all $i\in I_K$ leads us to the bound
\begin{equation}
\label{eq_proof_oscillation_T_bound_sum_I_in_K}
\sum\nolimits_{i\in I_K}{
    \mspace{2mu} \mu_i \mspace{1mu} \operatorname{osc}_T^{\,p}(E_i)
}
\leq
\left(\tfrac{\varepsilon}{2}\right)^{p}
    \left( \mathsmaller{\sum\nolimits_{i\in I_K}{\mspace{2mu} \mu_i}} \right)
\leq
\left(\tfrac{\varepsilon}{2}\right)^{p}.
\end{equation}
Now notice that $E_i \subset X\backslash K$ for $i \centernot \in I_K$,
which yields
\[
\sum\nolimits_{i\centernot \in I_K}{\mspace{2mu} \mu_i}
=
\sum\nolimits_{i\centernot \in I_K}{\mspace{2mu} \mu[E_i]}
=
\mu \left[ \mathsmaller{\bigsqcup\nolimits_{i\centernot \in I_K}{E_i}} \right]
\leq
\mu[X\backslash K]
\stackrel[\eqref{eq_proof_oscillation_T_bound_X_minus_K}]{}{<}
\left(\tfrac{\varepsilon}{2}\right)^{p} (D_Y^{\,p} + 1)^{-1}
\]
and allows us to conclude
\begin{equation}
\label{eq_proof_oscillation_T_bound_sum_I_not_in_K}
\sum\nolimits_{i\centernot \in I_K}{
    \mspace{2mu} \mu_i \mspace{1mu} \operatorname{osc}_T^{\,p}(E_i)
}
\leq
D_Y^{\,p} \left( \mathsmaller{
    \sum\nolimits_{i\centernot \in I_K}{\mspace{2mu} \mu_i}
} \right)
\leq
\left(\tfrac{\varepsilon}{2}\right)^{p}.
\end{equation}
Finally, the bounds
\eqref{eq_proof_oscillation_T_bound_sum_I_in_K}
and
\eqref{eq_proof_oscillation_T_bound_sum_I_not_in_K}
together yield
\[
\left(
\sum\nolimits_{i\in I}{
    \mspace{2mu} \mu_i \mspace{1mu} \operatorname{osc}_T^{\,p}(E_i)
}
\right)^{1/p}
\leq
\left(
    \left(\tfrac{\varepsilon}{2}\right)^{p}
        + \left(\tfrac{\varepsilon}{2}\right)^{p}
\right)^{1/p}
\leq
    \tfrac{\varepsilon}{2} + \tfrac{\varepsilon}{2}
= \varepsilon. \qedhere
\]
\end{proof}

\begin{lemma}
\label{lemma_disc_p_controlled_by_dp_and_sum_of_weighted_oscillations}
Consider a compact OT problem $(X,Y,\mu,\nu,c)$ and a projection map $T_h$
constructed from an $h$-partition $\mathcal{C}_X = \{(E_i, x_i)\}_{i\in I}$
for $X$. Then, for every map $T:X \to Y$ and every $1 \leq p < \infty$, we get
\begin{equation}
\label{eq_estimate_disc_p_T_h_T}
\operatorname{disc}_p(T_h, T) \leq   2 \mspace{2mu} d_p(T_h,T) +
2 \mspace{0mu} \left(
\sum\nolimits_{i\in I}{
    \mspace{2mu} \mu_i \mspace{1mu} \operatorname{osc}_T^{\,p}(E_i)
} \right)^{1/p} .
\end{equation}
\end{lemma}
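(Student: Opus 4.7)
My plan is to exploit the fact, built into Definition \ref*{definition_projection_maps}, that every projection map $T_h$ is piecewise constant on the cells of $\mathcal{C}_X$: namely $T_h(x) = T_h(x_i)$ for every $x \in E_i$. This turns the comparison between $\operatorname{disc}_p$ and $d_p$ into a straightforward $L^p$-triangle-inequality estimate, with the oscillation of $T$ playing the role of the error introduced when evaluating $T$ at $x_i$ rather than at the actual point $x\in E_i$.

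More precisely, I would fix $i\in I$ and $x\in E_i$ and combine $T_h(x)=T_h(x_i)$ with the triangle inequality in $(Y,d_Y)$ to obtain
\begin{equation*}
d_Y(T_h(x_i),T(x_i))
= d_Y(T_h(x),T(x_i))
\leq d_Y(T_h(x),T(x)) + d_Y(T(x),T(x_i))
\leq d_Y(T_h(x),T(x)) + \operatorname{osc}_T(E_i).
\end{equation*}
This pointwise bound can be packaged as $\tilde f(x)\leq f(x)+g(x)$ for $\mu$-a.e.\ $x$, where $f(x)=d_Y(T_h(x),T(x))$, and $\tilde f$ and $g$ are the piecewise-constant functions on $X$ given on $E_i$ by $d_Y(T_h(x_i),T(x_i))$ and $\operatorname{osc}_T(E_i)$ respectively.

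The key identities are then
\begin{equation*}
\|\tilde f\|_{L^p(\mu)}^{\,p} = \operatorname{disc}_p(T_h,T)^{\,p},
\quad
\|f\|_{L^p(\mu)}^{\,p} = d_p(T_h,T)^{\,p},
\quad
\|g\|_{L^p(\mu)}^{\,p} = \sum_{i\in I}{\mu_i\,\operatorname{osc}_T^{\,p}(E_i)},
\end{equation*}
and applying Minkowski's inequality to $\tilde f\leq f+g$ yields
\begin{equation*}
\operatorname{disc}_p(T_h,T) \leq d_p(T_h,T) + \left(\sum\nolimits_{i\in I}{\mu_i\,\operatorname{osc}_T^{\,p}(E_i)}\right)^{1/p},
\end{equation*}
which is actually stronger than \eqref{eq_estimate_disc_p_T_h_T} (the factor $2$ in the statement is not tight, but of course the weaker bound follows immediately). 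There is no real obstacle here: the only non-routine input is recalling that projection maps are constant on each $E_i$, and the rest is the $L^p$-triangle inequality. The constant $2$ on the RHS of \eqref{eq_estimate_disc_p_T_h_T} presumably anticipates a more flexible form that will be convenient in the subsequent necessity argument, so I would keep the proof as written and simply note that the factor can be absorbed trivially.
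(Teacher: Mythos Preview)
Your proof is correct and follows essentially the same route as the paper's: both start from the pointwise triangle inequality $d_Y(T_h(x_i),T(x_i)) \leq d_Y(T_h(x),T(x)) + \operatorname{osc}_T(E_i)$ for $x\in E_i$ (using that $T_h$ is constant on cells) and then integrate. The only difference is that the paper first raises to the $p$-th power via Jensen and takes the root at the end, picking up the factor $2^{(p-1)/p}\leq 2$, whereas your direct use of Minkowski avoids this loss and yields the sharper constant~$1$; as you note, the stated bound \eqref{eq_estimate_disc_p_T_h_T} follows a fortiori.
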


\begin{proof}
For $i\in I$ and $x\in E_i$, Jensen's inequality implies
\begin{align*}
d_Y^{\,p}(T_h(x_i), T(x_i))
&\leq
2^{p-1} d_Y^{\,p}(T_h(x_i), T(x)) + 2^{p-1} d_Y^{\,p}(T(x), T(x_i)) \\
&\leq
2^{p-1} d_Y^{\,p}(T_h(x_i), T(x)) + 2^{p-1} \operatorname{osc}_T^{\,p}(E_i).
\end{align*}
Integrating the previous inequality over $E_i$ with respect to $x$ yields
\[
\mu_i \mspace{2mu} d_Y^{\,p}(T_h(x_i), T(x_i))
\leq
2^{p-1} \int_{E_i}{d_Y^{\,p}(T_h(x_i), T(x))\,d\mu(x)}
    + 2^{p-1} \mu_i \operatorname{osc}_T^{\,p}(E_i)
\]
and summing over $i \in I$ we get
\[
\operatorname{disc}_p^{\mspace{1mu}p}(T_h, T)
\leq
2^{p-1}  \mspace{2mu}  d_p^{\,p}(T_h, T) +
2^{p-1} \mspace{0mu} \left(
\sum\nolimits_{i\in I}{
    \mspace{2mu} \mu_i \mspace{1mu} \operatorname{osc}_T^{\,p}(E_i)
} \right) .
\]
Then,
\eqref{eq_estimate_disc_p_T_h_T}
follows by taking the $p$-th root on both sides of the
previous inequality and observing that
$(x+y)^{1/p}\leq x^{1/p} + y^{1/p}$ and $2^{(p-1)/p} \leq 2$ hold for $p\geq 1$.
\end{proof}

\begin{proof}[Proof of
\emph{\ref*{corollary_equivalence_discrete_continuous_item_mu}
$\mspace{-8mu}\implies\mspace{-8mu}$
\ref*{corollary_equivalence_discrete_continuous_item_disc}}
in \Cref{corollary_equivalence_discrete_continuous}]

Fix an optimal transport map $T_*:X\to Y$ for the compact problem
$(X,Y,\mu,\nu,c)$ and assume $\mu[D(T_*)]=0$. Consider an approximating sequence
of projections $(T_k)_{k\in\mathbb{N}}$ and fix $1\leq p <\infty$.
Now we take an arbitrary $\varepsilon>0$ and complete the proof by showing that
$\limsup_{k}{\;\operatorname{disc}_p(T_k, T_*)} \leq 2\varepsilon$.

Each $T_k$ is constructed from an $h_k$-partition $\mathcal{C}^k_X$ for $X$, say
\[
\mathcal{C}^k_X = \big\{ \big( E_i^k, x_i^k \big) \big\}_{i\in I_k}
\text{ with }\;
\mu\big[E_i^k\big] = \mu_i^k
\text{ for all $i\in I_k$.}
\]
Since $\mu[D(T_*)]=0$, apply
\Cref{lemma_estimation_of_oscillation_for_sufficiently_fine_partitions}
with $T=T_*$ to obtain $h_0$.
Take $k_0$ such that $h_k\leq h_0$ for $k\geq k_0$, which is possible
since $h_k \longconverges[k] 0$. Then,
\eqref{eq_lemma_estimation_of_oscillation_for_sufficiently_fine_partitions}
turns into
\[
\left(
\sum\nolimits_{i\in I_k}{
    \mspace{2mu} \mu_i^k \mspace{1mu}
        \operatorname{osc}_{T_*}^{\,p}\big(E^k_i\big)
} \right)^{1/p}
\leq \varepsilon
\;\;\;\text{ for all $k\geq k_0$.}
\]
For $k\geq k_0$, apply
\Cref{lemma_disc_p_controlled_by_dp_and_sum_of_weighted_oscillations}
with $T_h=T_k$ and $T=T_*$ to conclude
\[
\operatorname{disc}_p(T_k, T_*)
\leq
2 \mspace{2mu} d_p(T_k,T_*) + 2 \mspace{0mu} \left(
    \sum\nolimits_{i\in I_k}{
        \mspace{2mu} \mu_i^k \mspace{1mu}
            \operatorname{osc}_{T_*}^{\,p} \big(E^k_i\big)
    } \right)^{1/p}
\leq 2 \mspace{2mu} d_p(T_k,T_*) + 2 \varepsilon.
\]
Since $d_p(T_k,T_*)\longconverges[k] 0$ by
\Cref{introtheorem_conv_maps},
the previous inequality implies
\[
\limsup_{k}{\;\operatorname{disc}_p(T_k, T_*)}
\;\leq\;
\limsup_{k}{\; 2 \mspace{2mu} d_p(T_k,T_*) + 2 \varepsilon}
\;=\;
2 \varepsilon . \qedhere
\]
\end{proof}

\bigsubsection{Proof of Corollary B : Necessity}

\begin{lemma}
\label{lemma_balls_in_partition}
Consider a metric space $(X,d_X)$,
a nonempty compact set $K \subset X$ and $h>0$.
Then, there are points $x_1, \dots, x_n \in K$ and
disjoint Borel sets $E_1, \dots,  E_n \subset X$ with diameters
at most $h$ satisfying the following properties:
\begin{enumerate}[%
label=\roman*), ref=\emph{\roman*)}, topsep=0pt, itemsep=5pt, leftmargin=35pt]
\item \label{lemma_balls_in_partition_item_open_neighborhood}
The set $U = \bigsqcup_{i}{E_i}$ is an open neighborhood of $K$.
\item \label{lemma_balls_in_partition_item_radius_r}
There is $r>0$ such that $B_r(x_i) \subset E_i$ for all $i\in I$.
\end{enumerate}
\end{lemma}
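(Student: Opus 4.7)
The plan is to build the $E_i$ as disjoint pieces of balls obtained from a maximal separated subset of $K$. First I would fix a scale $\rho = h/2$ and pick a maximal $\rho$-separated collection $\{x_1,\dots,x_n\} \subset K$, i.e.\ a maximal set with $d_X(x_i, x_j) \geq \rho$ for $i \neq j$. Total boundedness of $K$ forces $n$ to be finite, and maximality forces $K \subset \bigcup_i B_\rho(x_i)$, since otherwise some $x \in K$ could be added to the collection.

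The naive attempt to disjointify by $\widetilde{E}_i = B_\rho(x_i) \setminus \bigcup_{j<i} B_\rho(x_j)$ is not sufficient for item \ref{lemma_balls_in_partition_item_radius_r}: two centers $x_i, x_j$ can be exactly $\rho$ apart, leaving no room for a uniform $r>0$ with $B_r(x_i) \subset \widetilde E_i$. To fix this I would disjointify using lex-tie-broken Voronoi cells
\begin{equation*}
V_i = \big\{\, x \in X : d_X(x,x_i) < d_X(x,x_j) \text{ for } j<i \text{ and } d_X(x,x_i) \leq d_X(x,x_j) \text{ for } j>i \,\big\},
\end{equation*}
which are Borel and partition $X$ (continuity of the distance functions makes each $V_i$ Borel, and for each $x$ the smallest index minimizing $d_X(x, x_j)$ is the unique $i$ with $x\in V_i$), and then set $E_i = V_i \cap B_\rho(x_i)$.

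The remaining properties would follow from short computations. The inclusion $E_i \subset B_\rho(x_i)$ gives $\diam{E_i} \leq 2\rho = h$. For \ref{lemma_balls_in_partition_item_open_neighborhood}, any $x \in \bigcup_k B_\rho(x_k)$ lies in a unique cell $V_{i^*}$, and the Voronoi inequality $d_X(x, x_{i^*}) \leq d_X(x, x_k) < \rho$ puts $x \in E_{i^*}$; together with the reverse inclusion $E_i \subset B_\rho(x_i)$ and $K \subset \bigcup_k B_\rho(x_k)$ this yields $\bigsqcup_i E_i = \bigcup_i B_\rho(x_i)$, which is open as a union of open balls and is an open neighborhood of $K$. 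For \ref{lemma_balls_in_partition_item_radius_r} I would take $r = \rho/2 = h/4$: for $y \in B_r(x_i)$ and any $j \neq i$, the separation $d_X(x_i, x_j) \geq \rho = 2r$ and the triangle inequality give $d_X(x_j, y) > r > d_X(x_i, y)$, placing $y$ in $V_i$ unconditionally (the strict inequality makes the tie-breaking rule irrelevant) and hence in $E_i$.

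The main point of care is item \ref{lemma_balls_in_partition_item_radius_r}: any disjointification by set subtraction creates sharp boundaries where two centers may end up arbitrarily close, preventing a uniform interior radius. The Voronoi construction circumvents this by trading set subtraction for strict distance comparisons, which combine cleanly with the $\rho$-separation of the centers to yield the uniform $r = \rho/2$ enjoyed by every $E_i$ simultaneously.
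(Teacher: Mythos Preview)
Your proof is correct and takes a genuinely different route from the paper's. The paper starts from a finite \emph{minimal} cover of $K$ by open balls of radius $\rho=h/2$, picks $x_i\in B_i\cap K$ with $x_i\notin B_j$ for $j\neq i$ by minimality, disjointifies by the naive set subtraction $\tilde{E}_i=B_i\setminus\bigcup_{j<i}B_j$, and then \emph{post hoc} repairs property~\ref{lemma_balls_in_partition_item_radius_r} by carving out small balls $B_r(x_i)$ (with $r=\min\{\tfrac{1}{2}\min_{i\neq j}d_X(x_i,x_j),\,\min_i r_i\}$ where $B_{r_i}(x_i)\subset B_i$) and reassigning each to its own cell. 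You instead start from a maximal $\rho$-separated subset of $K$, which simultaneously provides the centers, the cover $K\subset\bigcup_i B_\rho(x_i)$, and the uniform separation $d_X(x_i,x_j)\geq\rho$; the lex-Voronoi disjointification then delivers~\ref{lemma_balls_in_partition_item_radius_r} directly with the explicit constant $r=h/4$, with no tweaking step needed. Your approach is cleaner and yields a uniform $r$ depending only on $h$, whereas the paper's $r$ depends on the particular cover; on the other hand the paper's argument avoids Voronoi cells and stays closer to the elementary set-subtraction construction already used in \Cref{lemma_existence_of_h_partitions}.
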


\begin{proof}
Take $\rho = h/2$ and consider a finite, minimal cover of $K$ by
open balls of radius $\rho$, say $\{B_1, \dots , B_n\}$.
By minimality, there is $x_i\in B_i \cap K$ with
$x_i \centernot \in B_j$ for all $j\neq i$.
As in the proof of
\Cref{lemma_existence_of_h_partitions},
construct disjoint Borel sets
$\tilde{E_1}, \dots, \tilde{E_n}$ with
\[
\textstyle U \defeq
    \bigcup\nolimits_{i}{B_i}  = \bigsqcup\nolimits_{i}{\tilde{E}_i}
\;\text{ and }\;
x_i \in \tilde{E_i} \subset  B_i
\;\text{ for all $1\leq i \leq n$.}
\]
We are almost done, but the sets $\tilde{E_1}, \dots, \tilde{E_n}$
fail to satisfy
\ref{lemma_balls_in_partition_item_radius_r},
so we tweak them by adding a small ball $B_r(x_i)$
to each $\tilde{E_i}$ while preserving disjointness.
First note that there is $r_i>0$ with $B_{r_i}(x_i) \subset B_i$, so we can
take $r_0 = \min_{1\leq i\leq n}{r_i}>0$.
Now take $\delta = \min_{i\neq j}{d_X(x_i,x_j)}$ and
$r = \min{\{ \delta/2, r_0 \}} > 0$
to define
\[
V = B_r(x_1) \sqcup \dots \sqcup B_r(x_n)
\;\text{ and }\;
E_i = (\tilde{E_i} \backslash V) \cup B_r(x_i)
\;\text{ for all $1\leq i \leq n$. }\;
\]
Since $E_i \subset B_i$ then $\diam{E_i} \leq 2\rho = h$.
The sets $E_1, \dots, E_n$ remain disjoint
since $B_r(x_i)\cap B_r(x_j)=\emptyset$ for $i\neq j$.
Finally, notice that
$U = \bigsqcup\nolimits_{i}{E_i}$ since $V \subset U$.
\end{proof}

\begin{lemma}
\label{lemma_trick_build_counterexample_for_necessity_in_corollary_B}
Let $(X,Y,\mu,\nu,c)$ be a compact OT problem
and let $T:X\to Y$ be an arbitrary map with $\mu[D(T)]>0$.
Then, there is a constant $\delta>0$ depending only on $T$ with the following
property:
for every $h>0$, there are optimal $h$-plans $\pi^1_h$, $\pi^2_h$
and projection maps
$T_h^1 \in \mathcal{T}[\pi_h^1]$, $T_h^2 \in \mathcal{T}[\pi_h^2]$
such that
\begin{equation}
\label{eq_lemma_trick_counterexample_for_necessity_in_corollary_B}
\delta \leq \operatorname{disc}_p(T_h^1,T) + \operatorname{disc}_p(T_h^2,T)
+ d_p(T_h^1,T_h^2)
\end{equation}
for every $1\leq p < \infty$.
\end{lemma}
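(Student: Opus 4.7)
My plan is to exploit the discontinuity of $T$ by constructing two $h$-partitions of $X$ that share the same underlying Borel pieces but differ in the distinguished points chosen to witness the oscillation of $T$. Since projection maps are constant on partition pieces, the resulting projections $T_h^1, T_h^2$ will both be constant on a common family of pieces, while $\operatorname{disc}_p(T_h^j,T)$ will evaluate $T$ at two nearby points with far-apart $T$-values; the lower bound \eqref{eq_lemma_trick_counterexample_for_necessity_in_corollary_B} will then follow from the triangle inequality. To extract a uniform scale of discontinuity I use that $D(T)=\bigcup_{n}{D_{1/n}(T)}$ and $\mu[D(T)]>0$, so there is $\alpha>0$ with $\beta\defeq\mu[D_\alpha(T)]>0$; setting $K\defeq D_\alpha(T)$, a compact subset of $X$, I claim $\delta\defeq\alpha\beta/6$ works.

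Given $h>0$, I apply \Cref{lemma_balls_in_partition} to $K$ with parameter $h$ to obtain disjoint Borel sets $E_1,\dots,E_N$ of diameters at most $h$, points $\bar{x}_i \in K$, an open neighborhood $U=\bigsqcup_{i\leq N}{E_i}$ of $K$, and a single radius $r>0$ with $B_r(\bar{x}_i) \subset E_i$ for every $i$. Since $\bar{x}_i \in D_\alpha(T)$ implies $\operatorname{osc}_T(B_r(\bar{x}_i)) \geq \alpha$, I pick $x_i^1, x_i^2 \in B_r(\bar{x}_i) \subset E_i$ with $d_Y(T(x_i^1),T(x_i^2)) \geq \alpha/2$. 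I then extend $\{E_i\}_{i\leq N}$ to a full $h$-partition of $X$ by covering $X \setminus U$ with additional pieces (as in the proof of \Cref{lemma_existence_of_h_partitions}), pick any $h$-partition $\mathcal{C}_Y$ of $Y$, and let $\mathcal{C}_X^j$ (for $j\in\{1,2\}$) coincide with this common extension except that the distinguished point of $E_i$ is replaced by $x_i^j$ for $i\leq N$. Solving \eqref{kantorovich_discrete_problem} for each choice produces optimal $h$-plans $\pi_h^1, \pi_h^2$ by \Cref{lemma_existence_of_h_partitions}, from which I extract $T_h^j \in \mathcal{T}[\pi_h^j]$ (nonempty by \Cref{remark_projections_def}, using compactness of $Y$).

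To derive \eqref{eq_lemma_trick_counterexample_for_necessity_in_corollary_B}, I observe that $T_h^1,T_h^2$ are constant on each common piece $E_i$; denoting their values there by $z_i^1,z_i^2$, I have $z_i^j = T_h^j(x_i^1) = T_h^j(x_i^2)$ for $i\leq N$. The triangle inequality
\[
d_Y(T(x_i^1),T(x_i^2)) \leq d_Y(T(x_i^1),z_i^1) + d_Y(z_i^1,z_i^2) + d_Y(z_i^2,T(x_i^2)),
\]
combined with Jensen applied to the $p$-th power of a sum of three nonnegative terms, multiplication by $\mu_i$, summation over $i\leq N$, and the bounds $\sum_{i\leq N}{\mu_i}=\mu[U]\geq\mu[K]=\beta$ and $d_Y(T(x_i^1),T(x_i^2))\geq\alpha/2$ will yield
\[
(\alpha/2)^p \beta \leq 3^{p-1}\Big[ \operatorname{disc}_p^{\,p}(T_h^1,T) + d_p^{\,p}(T_h^1,T_h^2) + \operatorname{disc}_p^{\,p}(T_h^2,T) \Big].
\]
Here the middle sum is absorbed by $d_p^{\,p}(T_h^1,T_h^2)$ because each integrand is constant on $E_i$, and the flanking sums are dominated by the full $\operatorname{disc}_p^{\,p}$ sums. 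Taking $p$-th roots via $(a+b+c)^{1/p}\leq a^{1/p}+b^{1/p}+c^{1/p}$ and the bounds $\beta^{1/p}\geq\beta$ and $3^{(p-1)/p}\leq 3$ (both valid for $p\geq 1$ and $\beta\leq 1$) leaves $\delta=\alpha\beta/6$ on the left, as required. The main technical point is guaranteeing that both witnesses $x_i^1, x_i^2$ can be placed in the same Borel piece of a legitimate $h$-partition with enough room to exhibit the oscillation of $T$ — this is precisely what item \ref{lemma_balls_in_partition_item_radius_r} of \Cref{lemma_balls_in_partition} provides.
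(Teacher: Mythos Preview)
Your proof is correct and follows essentially the same approach as the paper: both arguments use \Cref{lemma_balls_in_partition} on $K=D_\alpha(T)$ to build two $h$-partitions of $X$ sharing the same pieces but with distinguished points $x_i^1,x_i^2\in E_i$ witnessing $d_Y(T(x_i^1),T(x_i^2))\geq\alpha/2$, then bound the right-hand side of \eqref{eq_lemma_trick_counterexample_for_necessity_in_corollary_B} from below via the triangle inequality. The only cosmetic difference is that the paper applies Minkowski's inequality for the seminorm $(\sum_i\mu_i|t_i|^p)^{1/p}$ directly, whereas you use Jensen on the $p$-th power before taking roots; this costs you a factor of $3$ in the constant (your $\delta=\alpha\beta/6$ versus the paper's $\delta=m\alpha/2$ in your notation) but is otherwise immaterial.
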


\begin{proof}
Since $\mu[D(T)]>0$ and $D(T) = \bigcup\nolimits_{n\in\mathbb{N}}{D_{1/n}(T)}$,
the continuity of $\mu$ implies the existence of
$\alpha>0$ with $\mu[D_{\alpha}(T)] = m>0$.
Let $\beta = \alpha/2$ and take
\begin{equation}
\label{eq_lemma_trick_definition_delta}
0 < \delta
\defeq \inf_{1 \leq p < \infty}{m^{1/p} \beta}
= \inf_{1 \leq p < \infty}{\mu[D_{\alpha}(T)]^{1/p} \beta}.
\end{equation}
Now we show that $\delta$ has the desired property. To this end, we fix $h>0$
and construct a pair of projection maps $T_h^1$, $T_h^2$ satisfying
\eqref{eq_lemma_trick_counterexample_for_necessity_in_corollary_B}
for all $1\leq p < \infty$.

Since $K \defeq D_{\alpha}(T)$ is compact,
\Cref{lemma_balls_in_partition}
gives points $x_1, \dots, x_n$ in $D_{\alpha}(T)$
and disjoint Borel sets $E_1, \dots, E_n$
with $\diam{E_i}\leq h$
covering $D_{\alpha}(T)$ and satisfying
\ref{lemma_balls_in_partition_item_open_neighborhood}
and
\ref{lemma_balls_in_partition_item_radius_r}.
Then there is $r>0$ such that $B_r(x_i) \subset E_i$
and we get
\[
\beta < \alpha
\leq \operatorname{osc}_{T}(B_r(x_i))
\leq \operatorname{osc}_{T}(E_i)
= \sup{ \big \{ d_Y(T(x^1), T(x^2)) \,:\, x^1,x^2 \in E_i \big\} }.
\]
It follows that, for $1\leq i\leq n$,
we can choose $x_i^1,x_i^2 \in E_i$ with
\begin{equation}
\label{eq_lemma_trick_lower_bound_for_T_evaluations}
d_Y(T(x_i^1), T(x_i^2)) \geq \beta.
\end{equation}

Notice that
$\{(E_i,x_i^1)\}_i$ and $\{(E_i,x_i^2)\}_i$
are $h$-partitions for the open set $U=\bigsqcup_i{E_i}$
and, since $X\backslash U$ is compact,
\Cref{lemma_existence_of_h_partitions}
ensures the existence of an $h$-partition
$\mathcal{C}_{X\backslash U}$ for $X\backslash U$.
We combine them to build two $h$-partitions for $X$:
\[
\mathcal{C}^{\, 1}_{X} =
\big\{\big(E_i,x_i^1\big)\big\}_{i=1}^{n} \cup \, \mathcal{C}_{X\backslash U}
\;\;\text{ and }\;\;
\mathcal{C}^{\, 2}_{X} =
\big\{\big(E_i,x_i^2\big)\big\}_{i=1}^{n} \cup \, \mathcal{C}_{X\backslash U}.
\]
Considering some $h$-partition $\mathcal{C}_{Y}$
for $Y$, we produce two optimal $h$-plans:
$\pi_h^1$ from $\mathcal{C}^{\, 1}_{X}$, $\mathcal{C}_{Y}$ and
$\pi_h^2$ from $\mathcal{C}^{\, 2}_{X}$, $\mathcal{C}_{Y}$.
Finally, we consider projections
$T_h^1 \in \mathcal{T}[\pi_h^1]$, $T_h^2 \in \mathcal{T}[\pi_h^2]$
and show that
\eqref{eq_lemma_trick_counterexample_for_necessity_in_corollary_B}
holds.

Fix $1 \leq p < \infty$ and notice that
\[ \textstyle
m = \mu[D_{\alpha}(T)] \leq \mu[U]
= \mu \big[ \bigsqcup_{i=1}^{n}{E_i} \big]
= \sum\nolimits_{i=1}^{n}{\mu[E_i]}
= \sum\nolimits_{i=1}^{n}{\mu_i},
\]
which together with
\eqref{eq_lemma_trick_lower_bound_for_T_evaluations}
implies
\begin{equation}
\label{eq_lemma_trick_bound_before_triangle_inequality}
\delta
\stackrel[\eqref{eq_lemma_trick_definition_delta}]{}{\leq}
m^{1/p} \beta
\leq \left( \sum\nolimits_{i=1}^{n}{\mu_i \mspace{2mu} \beta^p} \right)^{1/p}
\leq \left( \sum\nolimits_{i=1}^{n}{
    \mu_i \mspace{2mu} d^{\,p}_Y(T(x_i^1), T(x_i^2))
} \right)^{1/p}.
\end{equation}
Triangle inequality for $d_Y$ yields
$d_Y(T(x_i^1) , T(x_i^2)) \leq A_i^1 + A_{i}^{12} + A_i^2$,
where
\begin{equation*}
\textstyle
A_i^1 = d_Y(T(x_i^1), T_h^1(x_i^1))
\,,\;
A_i^{12} = d_Y(T_h^1(x_i^1),T_h^2(x_i^2))
\,,\;
A_i^2 = d_Y(T_h^2(x_i^2),T(x_i^2)).
\end{equation*}
Applying this bound to the RHS of
\eqref{eq_lemma_trick_bound_before_triangle_inequality}
results in
\[ \textstyle
\delta
\leq \big( \sum\nolimits_{i=1}^{n}{
        \mu_i \mspace{2mu} d^{\,p}_Y(T(x_i^1), T(x_i^2))
} \big)^{1/p}
\leq \big( \sum\nolimits_{i=1}^{n}{
    \mu_i \mspace{2mu} \big ( A_i^{1} + A_i^{12} + A_i^{2} \big)^p
} \big)^{1/p},
\]
and then the triangle inequality
for the seminorm $(\sum_{i}{\mu_i |t_i|^p})^{1/p}$ yields
\begin{equation}
\label{eq_lemma_trick_bound_with_three_sums}
\textstyle
\delta
\leq
\left( \sum\nolimits_{i=1}^{n}{
    \mu_i \mspace{2mu} \big( A_i^{1} \big)^p
}\right)^{1/p}
+
\left( \sum\nolimits_{i=1}^{n}{
    \mu_i \mspace{2mu} \big ( A_i^{12} \big)^p
}\right)^{1/p}
+
\left( \sum\nolimits_{i=1}^{n}{
    \mu_i \mspace{2mu} \big ( A_i^{2} \big)^p
}\right)^{1/p}.
\end{equation}

To finish the proof note first that for $k=1,2$ we have
\begin{equation}
\label{eq_bound_A_i_k}
\left( \sum\nolimits_{i=1}^{n}{
    \mu_i \mspace{2mu} \big( A_i^{k} \big)^p
}\right)^{1/p}
=
\left( \sum\nolimits_{i=1}^{n}{
    \mu_i \mspace{2mu} d^{\,p}_Y(T_h^k(x_i^k), T(x_i^k))
}\right)^{1/p}
\stackrel[(*)]{}{\leq}
\operatorname{disc}_p(T_h^k,T).
\end{equation}
Notice that the terms in $\operatorname{disc}_p(T_h^k,T)$ coming from the
pointed sets in $\mathcal{C}_{X\backslash U}$ are missing in the LHS of
\eqref{eq_bound_A_i_k},
and this is why equality fails in $(*)$.
A key observation here is that $\mu[E_i]=\mu_i$ for both
$\mathcal{C}^{\, 1}_{X}$ and $\mathcal{C}^{\, 2}_{X}$,
so the expressions of $\operatorname{disc}_p$ for both $h$-partitions differ
only in the points where the functions are evaluated, not in their weights.
On the other hand, note that for $1 \leq i \leq n$ we have
\[
\mu_i \mspace{2mu} d^{\,p}_Y(T_h^1(x_i^1),T_h^2(x_i^2))
=  \int_{E_i}{d^{\,p}_Y(T_h^1(x),T_h^2(x)) \, d\mu(x)}
\]
since $T_h^1$ and $T_h^2$ are constant on each $E_i$ by definition.
Then,
\begin{equation}
\label{eq_bound_A_i_12}
\left( \sum\nolimits_{i=1}^{n}{
    \mu_i \mspace{2mu} \big ( A_i^{12} \big)^p
}\right)^{1/p}
=
\left( \sum\nolimits_{i=1}^{n}{
    \mu_i \mspace{2mu} d^{\,p}_Y(T_h^1(x_i^1),T_h^2(x_i^2))
}\right)^{1/p}
\leq  d_p(T_h^1,T_h^2).
\end{equation}
Plugging the bounds
\eqref{eq_bound_A_i_k}
and
\eqref{eq_bound_A_i_12}
into
\eqref{eq_lemma_trick_bound_with_three_sums}
yields
\eqref{eq_lemma_trick_counterexample_for_necessity_in_corollary_B}.
\end{proof}

\begin{proof}[Proof of
\emph{\ref*{corollary_equivalence_discrete_continuous_item_disc}
$\mspace{-8mu}\implies\mspace{-8mu}$
\ref*{corollary_equivalence_discrete_continuous_item_mu}}
in \Cref{corollary_equivalence_discrete_continuous}]
We prove the contrapositive statement, so we start by assuming the negation
of \ref{corollary_equivalence_discrete_continuous_item_mu} to get
$\mu[D(T_*)]>0$. Now we fix a constant $\delta>0$
by applying
\Cref{lemma_trick_build_counterexample_for_necessity_in_corollary_B}
with $T=T_*$ and also choose some $1 \leq p < \infty$.
For each $k\in \mathbb{N}$ we let $h_k=k^{-1}$ and then we take
optimal $h_k$-plans $\pi_k^1,\pi_k^2$ and projection maps
$T_k^1 \in \mathcal{T}[\pi_k^1]$, $T_k^2 \in \mathcal{T}[\pi_k^2]$
such that
\begin{equation}
\label{eq_proof_necessary_condition_for_discrete_convergence_ineq_from_lemma}
\delta \leq \operatorname{disc}_p(T_k^1,T_*) + \operatorname{disc}_p(T_k^2,T_*)
+ d_p(T_k^1,T_k^2),
\end{equation}
which is possible due to the property enjoyed by $\delta$
in
\Cref{lemma_trick_build_counterexample_for_necessity_in_corollary_B}.

Note that both $(T^1_k)^{\vphantom{1}}_k$ and $(T^2_k)^{\vphantom{2}}_k$ are approximating sequences
of projection maps for $(X,Y,\mu,\nu,c)$,
so triangle inequality for $d_p$ and
\Cref{introtheorem_conv_maps}
together imply
\begin{equation}
\label{eq_proof_necessary_condition_for_discrete_convergence_ineq_T_1_T_2}
\limsup_{k}{\; d_p(T_k^1,T_k^2)}
\leq
\limsup_{k}{\; \big[ d_p(T_k^1,T_*) + d_p(T_*,T_k^2) \big]} = 0.
\end{equation}
Then, taking $\limsup_{k}$ in
\eqref{eq_proof_necessary_condition_for_discrete_convergence_ineq_from_lemma}
and using
\eqref{eq_proof_necessary_condition_for_discrete_convergence_ineq_T_1_T_2},
we obtain
\[
0 < \delta \leq
\limsup_{k}{\; \big[
    \operatorname{disc}_p(T_k^1,T_*) + \operatorname{disc}_p(T_k^2,T_*)
\big] }.
\]
Therefore, either $\operatorname{disc}_p(T_k^1,T_*) \centernot\longconverges 0$
or $\operatorname{disc}_p(T_k^2,T_*) \centernot\longconverges 0$.
This shows that
$\ref*{corollary_equivalence_discrete_continuous_item_disc}$
in
\Cref{corollary_equivalence_discrete_continuous}
fails and the proof is complete.
\end{proof}

\renewcommand\thesection{\Alph{section}}
\setcounter{section}{24}
\setcounter{theorem}{0}

\section*{Appendix: Stability of Optimal Transport}
\label{section_stability_analysis_OT}

The validity of our three main results, i.e.\
Theorems
\ref{introtheorem_conv_values_plans} and
\ref{introtheorem_conv_maps}
and
\Cref{introcorollary_fully_discrete_version_convergence_maps},
is a consequence of the good stability properties enjoyed by
compact OT problems. %
We summarize these properties in
\Cref{proposition_stability_properties_OT},
whose proof is a variation on that of a well known stability result
for transport maps
\cite[Corollary 5.23]{villani_old_new}.
Both proofs rely on Prokhorov's theorem
\cite[Theorems 5.1 and 5.2]{billingsley_convergence_prob_measures}
and on an abstract version of Lusin's theorem
\cite[Theorem 2.3.5]{federer_GMT},
which we state below for completeness.

\begin{theorem}[Prokhorov]
\label{prokhorov_theorem}
Let $(Z,d_Z)$ be a Polish metric space
and let $\mathcal{F}$ be a family of measures in $\mathcal{P}(Z)$.
Then, $\mathcal{F}$ is sequentially precompact with respect to the topology of
weak convergence of measures in $\mathcal{P}(Z)$
iff $\mathcal{F}$ is \emph{tight}, which means
that for every $\varepsilon>0$ there is a compact set $K\subset Z$ with
$\sigma[Z\backslash K]<\varepsilon$ for all $\sigma \in \mathcal{F}$.
\end{theorem}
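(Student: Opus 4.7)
The plan is to prove the two directions of Prokhorov's theorem separately, with the substantial part being the implication from tightness to sequential precompactness; the converse follows from a more elementary argument based on Ulam's inner regularity theorem for Borel probability measures on Polish spaces.

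I would start with the easier direction, sequentially precompact $\implies$ tight, arguing by contradiction. If $\mathcal{F}$ is not tight, there is $\varepsilon_0>0$ such that for every compact $K\subset Z$ some $\sigma_K\in\mathcal{F}$ satisfies $\sigma_K[Z\setminus K]\geq\varepsilon_0$. Build inductively an increasing sequence of compact sets $(K_n)_n\subset Z$ and measures $(\sigma_n)_n\subset\mathcal{F}$ with $\sigma_n[Z\setminus K_{n-1}]\geq\varepsilon_0$ and $\sigma_j[Z\setminus K_n]<\varepsilon_0/4$ for $1\leq j\leq n$, enlarging $K_n$ at each step by applying Ulam's theorem to the already-chosen $\sigma_1,\dots,\sigma_n$. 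If some subsequence $\sigma_{n_k}$ converged weakly to $\sigma\in\mathcal{P}(Z)$, Ulam applied to $\sigma$ would yield a compact $L\subset Z$ with $\sigma[Z\setminus L]<\varepsilon_0/4$, and the Portmanteau theorem on the open set $Z\setminus L$ would force $\limsup_k \sigma_{n_{k+1}}[Z\setminus K_{n_k}]\leq \sigma[Z\setminus L]<\varepsilon_0/4$ as soon as $K_{n_k}\supset L$, contradicting $\sigma_{n_{k+1}}[Z\setminus K_{n_k}]\geq\varepsilon_0$.

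For the hard direction, tight $\implies$ sequentially precompact, I would exploit a compactification trick. Since $(Z,d_Z)$ is Polish, it embeds homeomorphically as a Borel (in fact $G_\delta$) subset of a compact metric space $\widehat{Z}$, for instance the Hilbert cube $[0,1]^{\mathbb{N}}$. Every $\sigma\in\mathcal{F}$ lifts canonically to a $\widehat{\sigma}\in\mathcal{P}(\widehat{Z})$ supported on the image of $Z$. Because $C(\widehat{Z})$ is a separable Banach space, Banach--Alaoglu together with the Riesz representation theorem imply that $\mathcal{P}(\widehat{Z})$ is weak-$*$ sequentially compact; thus any sequence $(\sigma_n)_n\subset\mathcal{F}$ admits a subsequence with $\widehat{\sigma}_{n_k}\longweaklyconverges[k]\widehat{\sigma}$ in $\mathcal{P}(\widehat{Z})$.

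The crux is then to show that $\widehat{\sigma}$ is supported on (the image of) $Z$ and that the convergence descends to weak convergence in $\mathcal{P}(Z)$. For the support statement, tightness provides compact $K_m\subset Z$ with $\sigma_n[Z\setminus K_m]<1/m$ uniformly in $n$; each $K_m$ remains compact (hence closed) in $\widehat{Z}$, so Portmanteau gives $\widehat{\sigma}[K_m]\geq 1-1/m$, whence $\widehat{\sigma}$ concentrates on $\bigcup_m K_m\subset Z$ and restricts to a genuine $\sigma\in\mathcal{P}(Z)$. For the descent, fix $\phi\in C_b(Z)$ and $\varepsilon>0$; pick $m$ large enough that $2\|\phi\|_\infty/m<\varepsilon$, extend $\phi|_{K_m}$ to a $\widetilde{\phi}\in C(\widehat{Z})$ with the same sup-norm via Tietze's theorem, use the weak convergence in $\mathcal{P}(\widehat{Z})$ applied to $\widetilde{\phi}$, and control the error on $Z\setminus K_m$ uniformly in $k$ by the tightness bound. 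Letting $\varepsilon\to 0$ closes the argument. The main obstacle is precisely this descent step, because $C_b(Z)$ properly contains the restrictions to $Z$ of elements of $C(\widehat{Z})$; tightness is exactly what permits the truncation-and-extend trick that reduces the general test function to one handled by compactness in $\widehat{Z}$.
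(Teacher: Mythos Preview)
The paper does not prove Prokhorov's theorem; it merely states it with a reference to Billingsley and then uses it as a black box in Remark~\ref{remark_extract_subsequences} and the proof of Proposition~\ref{proposition_stability_properties_OT}. So there is no ``paper's own proof'' to compare against, and any assessment must be of your argument on its own merits.

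Your proof of the hard direction (tight $\Rightarrow$ sequentially precompact) via embedding into a compact metrizable $\widehat{Z}$, extracting a weak-$*$ limit there, and using tightness both to locate the limit on $Z$ and to descend the convergence through a Tietze extension, is correct and is essentially the standard compactification proof.

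Your argument for the converse direction, however, has a genuine gap. Two issues arise at the final step. First, nothing in your inductive construction forces $K_{n_k}\supset L$ for any $k$: the sets $K_n$ are built from $\sigma_1,\dots,\sigma_n$ via Ulam's theorem, and $L$ comes from the limit measure $\sigma$, which is not available during the construction. Second, and more seriously, the Portmanteau inequality you invoke points the wrong way: for the \emph{open} set $Z\setminus L$ one gets $\liminf_k \sigma_{n_k}[Z\setminus L]\geq \sigma[Z\setminus L]$, which is a lower bound, not the upper bound you need. The inequality $\limsup_k \sigma_{n_{k+1}}[Z\setminus K_{n_k}]\leq \sigma[Z\setminus L]$ therefore does not follow. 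The standard route for this direction is different: one fixes $\varepsilon>0$ and, for each $n$, covers $Z$ by countably many open balls of radius $1/n$; using lower semicontinuity of $\sigma\mapsto\sigma[U]$ for open $U$ together with compactness of the weak closure $\overline{\mathcal{F}}$, one finds a finite union $U_n$ of such balls with $\sigma[U_n]>1-\varepsilon/2^n$ uniformly over $\overline{\mathcal{F}}$, and then $K=\bigcap_n \overline{U_n}$ is totally bounded, closed, hence compact, with $\sigma[Z\setminus K]<\varepsilon$ for all $\sigma\in\mathcal{F}$.
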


\begin{theorem}[Lusin]
\label{lusin_theorem}
Let $X$ be a locally compact metric space and let
$Y$ be a separable metric space.
Consider $\mu \in \mathcal{P}(X)$ and a Borel map
$T:X \to Y$.
Then, for every $E \in \mathcal{B}(X)$ with $\mu[E]<\infty$ and every
$\varepsilon>0$, there is a closed set $K\subset E$ such that
$\mu[E\backslash K]<\varepsilon$ and $T|_K$ is continuous.
\end{theorem}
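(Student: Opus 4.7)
The plan is to prove Lusin's theorem by reducing the continuity of $T$ to a countable family of approximations that exploit separability of $Y$, and then harvesting the closed set $K$ as a countable intersection. First I would fix a countable dense set $\{y_n\}_{n \in \mathbb{N}} \subset Y$ and, for each $k \in \mathbb{N}$, cover $Y$ by the open balls $B(y_n, 1/k)$. Pulling back and taking differences, I would decompose $X$ into the disjoint Borel sets
\[
A_{n,k} = T^{-1}\big(B(y_n, 1/k)\big) \setminus \bigcup\nolimits_{m<n}{T^{-1}\big(B(y_m, 1/k)\big)},
\qquad n \in \mathbb{N},
\]
which satisfy $\bigsqcup_n A_{n,k} = X$ and $T(A_{n,k}) \subset B(y_n, 1/k)$.

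Next, using that finite Borel measures on a locally compact metric space are inner regular by closed sets, I would select closed inner approximations compatibly across all $k$. Concretely, for each $k$ pick $N_k$ with $\mu\big[E \setminus \bigsqcup_{n \leq N_k}{A_{n,k}}\big] < \varepsilon / 2^{k+2}$, and for each $n \leq N_k$ pick a closed set $K_{n,k} \subset E \cap A_{n,k}$ with $\mu[(E \cap A_{n,k}) \setminus K_{n,k}] < \varepsilon / (N_k\,2^{k+2})$. Set $K_k = \bigsqcup_{n \leq N_k}{K_{n,k}}$, which is closed as a finite union of closed sets, and satisfies $\mu[E \setminus K_k] < \varepsilon/2^{k+1}$. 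Finally, take $K = E \cap \bigcap_k K_k$, which is closed in $E$ (and I would replace $E$ by its own closed inner approximation if needed to ensure $K$ is closed in $X$), and estimate $\mu[E \setminus K] \leq \sum_k \varepsilon/2^{k+1} < \varepsilon$.

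To verify continuity of $T|_K$, fix $x \in K$ and $\delta > 0$, and choose $k$ with $2/k < \delta$. Since $K \subset K_k$ and the pieces $K_{1,k}, \dots, K_{N_k,k}$ are finitely many pairwise disjoint closed sets whose union is $K_k$, each $K_{n,k}$ is clopen in $K_k$. If $x \in K_{n_0,k}$, then $U \defeq K \cap K_{n_0,k}$ is relatively open in $K$ and contains $x$; moreover $T(U) \subset B(y_{n_0}, 1/k)$, so every $x' \in U$ satisfies $d_Y(T(x), T(x')) \leq 2/k < \delta$. This gives continuity of $T|_K$ at $x$.

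The main subtlety is packaging the inner regularity step so that a single closed set works for all scales $k$ simultaneously: the clopen-within-$K_k$ argument is what makes the intersection $\bigcap_k K_k$ yield genuine (not merely ``approximate'') continuity, and it hinges on the finiteness of the union defining each $K_k$, which is why one must first truncate to $n \leq N_k$ using a tightness-type argument before invoking regularity on each $A_{n,k}$.
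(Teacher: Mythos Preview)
The paper does not supply its own proof of this theorem: it is stated for completeness and attributed to Federer \cite[Theorem 2.3.5]{federer_GMT}. So there is no in-paper argument to compare against.

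Your argument is the standard proof and is correct. One small point: your hedge about replacing $E$ by a closed inner approximation is unnecessary. Inner regularity yields each $K_{n,k}$ closed in $X$, not merely in $E$; hence each $K_k$ is closed in $X$, and so is $K = \bigcap_k K_k$, which already sits inside $E$ since $K_k \subset E$ for every $k$. Everything else---the truncation to finitely many pieces via $\mu[E]<\infty$, the clopen-in-$K_k$ observation that drives continuity, and the geometric series bound on $\mu[E\setminus K]$---is in order.
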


\begin{proposition}
\label{proposition_stability_properties_OT}
Let $(X,Y,\mu,\nu,c)$ be a compact OT problem with optimal value
$\Koperator[*]$ for its Kantorovich problem.
Let $(\pi_k)_k \subset \mathcal{P}(X\times Y)$ be such that
$\Koperator[\pi_k] \longconverges[k] \Koperator[*]$
and assume
$\pi_k \in \mathcal{A}(\mu_k,\nu_k)$ for all $k$
with
$\mu_k, \nu_k \longweaklyconverges[k] \mu,\nu$.
It follows that:
\begin{enumerate}[%
label=\roman*), ref=\emph{\roman*)}, topsep=0pt, itemsep=4pt, leftmargin=25pt]
\item \label{proposition_stability_properties_OT_item_kantorovich}
    If $\minsetkantorovich$ is the set of minimizers for
    \eqref{kantorovich_problem}, that is
    \[
    \minsetkantorovich = \big\{\mspace{2mu}
        \pi \in \mathcal{A}(\mu,\nu)
            \mspace{2mu}:\mspace{2mu} \Koperator[\pi] = \Koperator[*]
    \mspace{2mu}\big\},
    \]
    then $W_p(\pi_k, \minsetkantorovich) \longconverges[k] 0$
    for all $1\leq p < \infty$. In particular,
    if \ref{kantorovich_uniqueness_hypothesis} holds
    with optimal transport plan $\pi_*$,
    then $W_p(\pi_k, \pi_*) \longconverges[k] 0$
    and $\pi_k \longweaklyconverges[k] \pi_*$.
\item \label{proposition_stability_properties_OT_item_monge}
    If $\mu_k=\mu$ for all $k$ and \ref{strong_uniqueness_hypothesis} holds
    with optimal transport map $T_*$, then
        \begin{equation}\label{eq_proposition_stability_properties_OT}
        \pi_k \big[ \{ (x,y)\in X\times Y: \;d_Y(y,T_*(x)) \geq \delta \} \big]
            \longconverges[k] 0
        \end{equation}
    for every $\delta>0$.
\end{enumerate}
\end{proposition}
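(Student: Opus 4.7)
The plan is to deduce both items from standard OT stability arguments, combining Prokhorov's theorem to extract weakly convergent subsequential limits of $(\pi_k)_k$ with Lusin's theorem to control the mass of $\pi_k$ outside the graph of $T_*$ in item \ref{proposition_stability_properties_OT_item_monge}. Compactness of $X\times Y$ makes $(\pi_k)_k\subset\mathcal{P}(X\times Y)$ automatically tight, so by \Cref{prokhorov_theorem} every subsequence of $(\pi_k)_k$ contains a further weakly convergent subsequence; this observation is the source of all compactness we will use.

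For \ref{proposition_stability_properties_OT_item_kantorovich}, I would first fix a subsequence $(\pi_{k_l})_l$ and extract $(\pi_{k_{l_m}})_m$ with $\pi_{k_{l_m}}\longweaklyconverges[m]\pi$ for some $\pi\in\mathcal{P}(X\times Y)$. Weak convergence of $\pi_{k_{l_m}}$ forces weak convergence of its marginals, and combining with $\mu_k,\nu_k\longweaklyconverges[k]\mu,\nu$ we obtain $\pi\in\mathcal{A}(\mu,\nu)$. Continuity of $c$ on the compact space $X\times Y$ implies that $\Koperator$ is continuous in the weak topology, so $\Koperator[\pi]=\lim_m\Koperator[\pi_{k_{l_m}}]=\Koperator[*]$ and hence $\pi\in\minsetkantorovich$. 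Compactness of $X\times Y$ also ensures that the weak topology on $\mathcal{P}(X\times Y)$ is metrized by $W_p$ for every $p\in[1,\infty)$, so the Urysohn subsequence principle upgrades the conclusion to $W_p(\pi_k,\minsetkantorovich)\longconverges[k]0$. Under \ref{kantorovich_uniqueness_hypothesis} we have $\minsetkantorovich=\{\pi_*\}$, so $W_p(\pi_k,\pi_*)\longconverges[k]0$ follows immediately.

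For \ref{proposition_stability_properties_OT_item_monge}, the assumptions $\mu_k=\mu$ and \ref{strong_uniqueness_hypothesis} let me invoke \ref{proposition_stability_properties_OT_item_kantorovich} to conclude $\pi_k\longweaklyconverges[k]\pi_*=(\operatorname{Id}_X\times T_*)_\#\mu$. Fix $\delta>0$ and $\varepsilon>0$. By \Cref{lusin_theorem} applied to $T_*$ with $E=X$ there is a closed set $K\subset X$ with $\mu[X\setminus K]<\varepsilon$ and $T_*|_K$ continuous, so
\[
A_\delta=\big\{(x,y)\in K\times Y:\;d_Y(y,T_*(x))\geq\delta\big\}
\]
is closed in $X\times Y$. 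Since $\pi_*$ is concentrated on the graph of $T_*$, we get $\pi_*[A_\delta]=\mu\big[\{x\in K:d_Y(T_*(x),T_*(x))\geq\delta\}\big]=0$, and the portmanteau theorem yields $\limsup_k\pi_k[A_\delta]\leq\pi_*[A_\delta]=0$. Using $B_{T_*}(\delta)\subset A_\delta\cup\big((X\setminus K)\times Y\big)$ together with $\pi_k[(X\setminus K)\times Y]=\mu[X\setminus K]<\varepsilon$ gives $\limsup_k\pi_k[B_{T_*}(\delta)]\leq\varepsilon$, and letting $\varepsilon\to 0$ proves \eqref{eq_proposition_stability_properties_OT}.

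The main subtlety is that $B_{T_*}(\delta)$ is in general neither open nor closed because $T_*$ is only Borel, so portmanteau cannot be applied to it directly; Lusin's theorem is what converts the Borel regularity of $T_*$ into enough continuity on a large set to produce a genuinely closed approximant $A_\delta$, and absorbing the discarded mass into the $\varepsilon$-budget is what makes the estimate quantitative.
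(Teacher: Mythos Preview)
Your proof is correct and follows essentially the same route as the paper's: Prokhorov plus continuity of $\Koperator$ for item \ref{proposition_stability_properties_OT_item_kantorovich} (the paper phrases it as a proof by contradiction rather than via the Urysohn subsequence principle, but this is cosmetic), and Lusin's theorem plus portmanteau on the closed set $A_\delta$ for item \ref{proposition_stability_properties_OT_item_monge}, with the residual mass on $(X\setminus K)\times Y$ absorbed into the $\varepsilon$-budget exactly as you do.
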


\begin{remark}
\label{remark_extract_subsequences}
It follows from $\mu_k,\nu_k \longweaklyconverges[k] \mu,\nu$ and
Prokhorov's \Cref{prokhorov_theorem} that
$\{\mu_k\}_k$ and $\{\nu_k\}_k$ are tight, which together with the
condition $\pi_k \in \mathcal{A}(\mu_k,\nu_k)$ for all $k$ easily implies
the tightness of the family $\{\pi_k\}_k$
(see e.g.\ \cite[Lemma 4.4]{villani_old_new} for a proof).
Applying Prokhorov's theorem again we get a
weakly convergent subsequence $(\pi_{k_j})_j$
and its limit necessarily lies in $\mathcal{A}(\mu,\nu)$.
A quick proof of this last claim follows from the fact that,
for arbitrary $\pi \in \mathcal{P}(X \times Y)$,
the condition $\pi \in \mathcal{A}(\mu,\nu)$ is equivalent to
the validity of equality
\begin{equation}
\label{eq_equivalent_condition_marginals}
\int_{X}{\phi \,d\mu} + \int_{Y}{\psi \,d\nu}
= \int_{X\times Y}{ \big[ \phi(x) + \psi(y) \big] \,d\pi(x,y)}
\end{equation}
for all $\phi\in C_b(X)$ and $\psi\in C_b(Y)$.
Thus, rewriting
\eqref{eq_equivalent_condition_marginals}
for $\mu_{k_j}$, $\nu_{k_j}$, $\pi_{k_j}$ and passing to the limit
immediately yields $\lim_{j}{\pi_{k_j}} \in \mathcal{A}(\mu,\nu)$.
\end{remark}

\begin{proof}[Proof of \Cref*{proposition_stability_properties_OT}]
Assume, on the contrary, that
\ref{proposition_stability_properties_OT_item_kantorovich}
fails to hold. Then, fixing $p$ and passing to a subsequence if necessary,
we may assume
$W_p(\pi_k, \minsetkantorovich) \geq \varepsilon > 0$
for every $k$.
Working as in
\Cref{remark_extract_subsequences}
extract a subsequence
$(\pi_{k_j})_j$ converging weakly to $\pi \in \mathcal{A}(\mu,\nu)$.
It turns out that $\pi\in\minsetkantorovich$
since $c\in C_b(X\times Y)$ implies
\begin{equation*}
\Koperator[\pi]
= \int_{X\times Y}{c \,d\pi}
\stackrel[]{\eqref{eq_intro_weak_convergence_definition}}{=}
    \lim_{j}{\int_{X\times Y}{c \,d\pi_{k_j}}}
= \lim_{j}{\,\Koperator[\pi_{k_j}]}
= \lim_{k}{\,\Koperator[\pi_{k}]} = \Koperator[*].
\end{equation*}
Finally, given that $W_p$ metrizes the weak convergence
of measures on compact spaces \cite[Theorem 6.9]{villani_old_new},
we get $W_p(\pi_{k_j}, \pi) \longconverges[j] 0$ and the contradiction follows
\[
0 < \varepsilon
\leq
W_p(\pi_{k_j}, \minsetkantorovich)
=
\inf_{\widetilde{\pi} \in \minsetkantorovich}{ W_p(\pi_{k_j}, \widetilde{\pi})}
\leq
W_p(\pi_{k_j}, \pi) \longconverges[j] 0.
\]
Note that the last part of
\ref{proposition_stability_properties_OT_item_kantorovich}
is immediate since
\ref{kantorovich_uniqueness_hypothesis} implies
$\minsetkantorovich=\{\pi_*\}$.

For \ref{proposition_stability_properties_OT_item_monge},
assume \ref{strong_uniqueness_hypothesis} and note that
$\pi_k \longweaklyconverges[k] \pi_* = \pi_{T_*}$.
Fix $\delta>0$ and for arbitrary $\varepsilon>0$ apply Lusin's Theorem
\ref{lusin_theorem} to
obtain a closed set $K\subset X$ with
$\mu[X \backslash K] \leq \varepsilon$ and $T_*|_K$ continuous.
Observe that the set
\begin{equation*}
B = \big\{ (x,y) \in K\times Y : d_Y(y,T_*(x)) \geq \delta \big\}
\end{equation*}
is closed in $X \times Y$. Also, we get
$\pi_* [B] = \pi_{T_*} [B] = 0$
as $\pi_*$ is concentrated on the graph of $T_*$.
Since $B$ is closed,
the weak convergence of $(\pi_k)_k$ to $\pi_*$ implies
\begin{align*}
0 &= \,\pi_* [B]
\geq \,\limsup_{k}{ \; \pi_k[B]} \\
&= \,\limsup_{k}{ \;
    \pi_k \big[ \{  (x,y) \in K\times Y : d_Y(y,T_*(x)) \geq \delta  \} \big]
} \\
&\geq \,\limsup_{k}{
    \; \pi_k \big[ \{  (x,y) \in X\times Y : d_Y(y,T_*(x)) \geq \delta  \} \big]
} - \varepsilon,
\end{align*}
where the last inequality follows from the fact that
$\pi_k \in \mathcal{A}(\mu,\nu_k)$ and $\mu[X \backslash K] \leq \varepsilon$.
Finally, taking $\varepsilon \to 0$ yields
\eqref{eq_proposition_stability_properties_OT}.
\end{proof}

\vspace{6mm}

\centerline{ \scshape Acknowledgments }
\pdfbookmark[1]{Acknowledgments}{Acknowledgments}

This work was done by the author as part of his doctoral thesis
while partially supported by a CONICET doctoral fellowship.
The author was also supported by ANPCyT under grant PICT-2018-03017,
by Universidad de Buenos Aires under grant 20020160100002BA
and by CONICET under grant PIP 11220200100175CO.

\renewcommand{\bibname}{References}

\end{document}